\numberwithin{equation}{section}
\numberwithin{figure}{section}
\patchcmd{\thebibliography}{\chapter*}{\section*}{}{}
\newcommand{\commentout}[1]{{}} %
\newcommand{\abs}[1]{\left|#1\right|}
\newcommand{\bfa}{{\bf a}}
\newcommand{\bfb}{{\bf b}}
\newcommand{\bfc}{{\bf c}}
\newcommand{\bfe}{{\bf e}}
\newcommand{\bff}{{\bf f}}
\newcommand{\bfg}{{\bf g}}
\newcommand{\bfH}{{\bf H}}
\newcommand{\bfL}{{\bf L}}
\newcommand{\bfn}{{\bf n}}
\newcommand{\bfP}{{\bf P}}
\newcommand{\bfp}{{\bf p}}
\newcommand{\bfq}{{\bf q}}
\newcommand{\bfr}{{\bf r}}
\newcommand{\bfS}{{\bf S}}
\newcommand{\bft}{{\bf t}}
\newcommand{\bfu}{{\bf u}}
\newcommand{\bfV}{{\bf V}}
\newcommand{\bfv}{{\bf v}}
\newcommand{\bfw}{{\bf w}}
\newcommand{\bfx}{{\bf x}}
\newcommand{\bfz}{{\bf z}}
\newcommand{\bfpsi}{\boldsymbol{\psi}}
\newcommand{\bfxi}{\boldsymbol{\xi}}
\newcommand{\bfeta}{\boldsymbol{\eta}}
\newcommand{\bfvarphi}{\boldsymbol{\varphi}}
\newcommand{\bfPi}{\boldsymbol{\Pi}}
\newcommand{\grad}{\operatorname{grad}}
\newcommand{\rot}{\operatorname{rot}}
\newcommand{\curl}{\operatorname{curl}}
\renewcommand{\div}{\operatorname{div}}
\newcommand{\dd}{\,{\rm d}}
\newcommand{\vertiii}[1]{{\left\vert\kern-0.25ex\left\vert\kern-0.25ex\left\vert #1
    \right\vert\kern-0.25ex\right\vert\kern-0.25ex\right\vert}}
    \newcommand{\vertii}[1]{{\left\vert\kern-0.25ex\left\vert #1
    \right\vert\kern-0.25ex\right\vert}}
\def\MT_leftarrow_fill:{%
  \arrowfill@\leftarrow\relbar\relbar}
\def\MT_rightarrow_fill:{%
  \arrowfill@\relbar\relbar\rightarrow}
\newcommand{\xrightleftarrows}[2][]{\mathrel{%
  \raise.55ex\hbox{%
    $\ext@arrow 0359\MT_rightarrow_fill:{\phantom{#1}}{#2}$}%
  \setbox0=\hbox{%
    $\ext@arrow 3095\MT_leftarrow_fill:{#1}{\phantom{#2}}$}%
  \kern-\wd0 \lower.55ex\box0}}
\newcommand{\LC}[1]{\textcolor{black}{#1}}
\newcommand{\RG}[1]{\textcolor{black}{#1}}
\begin{document}

\markboth{S. Cao, L. Chen \& R. Guo}{Immersed VEM for electromagnetic interface problems in 3D}

\title{Immersed Virtual Element Methods for Electromagnetic Interface Problems in Three Dimensions}

\author{Shuhao Cao}
\address{Division of Computing, Analytics, and Mathematics,\\
School of Science and Engineering, University of Missouri-Kansas City\\ 
Kansas City, MO 64110\\
scao@umkc.edu}

\author{Long Chen}
\address{Department of Mathematics, University of California Irvine\\
 Irvine, CA 92697\\
chenlong@math.uci.edu}

\author{Ruchi Guo}
\address{Department of Mathematics, University of California Irvine\\
 Irvine, CA 92697\\
ruchig@uci.edu}

\maketitle
\begin{abstract}
Finite element methods for electromagnetic problems modeled by Maxwell-type equations are highly sensitive to the conformity of approximation spaces, and non-conforming methods may cause loss of convergence. This fact leads to an essential obstacle for almost all the interface-unfitted mesh methods in the literature regarding the application to electromagnetic interface problems, as they are based on non-conforming spaces. In this work, a novel immersed virtual element method for solving a 3D $\bfH(\curl)$ interface problem is developed, and the motivation is to combine the conformity of virtual element spaces and robust approximation capabilities of immersed finite element spaces. The proposed method is able to achieve optimal convergence. To develop a systematic framework, the $H^1$, $\bfH(\curl)$ and $\bfH(\div)$ interface problems and their corresponding problem-orientated immersed virtual element spaces are considered all together. In addition, the de Rham complex will be established based on which the Hiptmair-Xu (HX) preconditioner can be used to develop a fast solver for the $\bfH(\curl)$ interface problem. 
\end{abstract}

\keywords{Maxwell's equations; interface problems; virtual element methods; immersed finite element methods; maximum angle conditions; de Rham complex; fast solvers}

\ccode{AMS Subject Classification: 65N12, 65N15, 65N30, 46E35}

\section{Introduction}

 In this article, we shall develop a systematic framework to construct three-dimensional (3D) $H^1$, $\bfH(\curl)$, and $\bfH(\div)$ virtual element spaces 
 involving discontinuous coefficients, referred to as the immersed virtual element (IVE) spaces, 
 that can be used to solve the corresponding interface problems described in Section \ref{subsec:model} on unfitted meshes. 
 The proposed method is particularly important for electromagnetic interface problems 
 as the current unfitted-mesh methods in the literature have essential difficulty in handling $\bfH(\curl)$ problems, 
 see the detailed discussion in Section \ref{subsec:challenges}.
 
\subsection{Model problems}
\label{subsec:model}

Let $\Omega\subseteq\mathbb{R}^3$ denote an open and bounded modeling domain, 
and a subdomain $\Omega^-\subseteq \Omega$ contains the medium which has the physical property distinguished from the background medium 
occupying the subdomain $\Omega^+ = \Omega\backslash \overline{\Omega^-}$.
The surface $\Gamma =\partial \Omega^-$ is called interface and assumed to be sufficiently smooth 
with the normal vector $\bfn$ pointing from $\Omega^+$ to $\Omega^-$. 
We introduce two discontinuous piecewise constant parameters representing the medium properties:
\begin{equation}
\label{discont_coef_alpbeta}
\alpha=
\begin{cases}
   \alpha^-   & \text{in} ~ \Omega^-, \\
   \alpha^+   & \text{in} ~ \Omega^+,
\end{cases}
~~~~~~~~
\beta=
\begin{cases}
   \beta^-   & \text{in} ~ \Omega^-, \\
   \beta^+   & \text{in} ~ \Omega^+,
\end{cases}
\end{equation}
where $\alpha^{\pm}$ and $\beta^{\pm}$ are assumed to be positive constants.

The classic $H^1$-elliptic interface problem reads as
\begin{equation} 
\label{model_H1}
\begin{aligned}
      -\nabla\cdot(\beta\nabla u)&=f \quad \text{ in }  \Omega^-  \cup \Omega^+, 
\end{aligned}
\end{equation}
with $f\in L^2(\Omega)$, subject to certain boundary conditions on $\partial \Omega$ and jump conditions on the interface $\Gamma$:
\begin{subequations}
\label{H1_jump}
\begin{align}
       [u]_{\Gamma} & := u^+ - u^- = 0, \\
  [\beta \nabla u \cdot \bfn]_{\Gamma} &:=  \beta^{+}\nabla u^{+}\cdot \mathbf{ n}  - \beta^{-}\nabla u^{-}\cdot \mathbf{ n} =0,
\end{align}
\end{subequations}
where the parameter $\beta$ may represent, for example, the conductivity in electrical applications~\cite{2005HolderDavid,2010VallaghePapadopoulo}, 
or the dielectric constant in Poisson-Boltzmann equations~\cite{2007ChenHolstXu,2015YingXie}.

For electromagnetic interface problems, we consider the following $\curl\curl$-elliptic model
\begin{equation}
\label{model}
\curl (\alpha \curl\, \bfu) + \beta \bfu = \bff \;\;\;\;  \text{in} \; \Omega = \Omega^-  \cup \Omega^+, 
\end{equation}
which is derived from discretizing a time-dependent Maxwell system in which the magnetic field is eliminated.
Here for simplicity we assume $\bff\in\bfH(\div;\Omega)$. 
If positive piecewise constant parameters $\epsilon$, $\sigma$ and $\mu$ represent the electric permeability, 
conductivity and magnetic permeability of the medium respectively,
then $\alpha = \mu^{-1}$ and $\beta = \epsilon\triangle t^{-2} + \sigma \triangle t^{-1}$. 
Due to the interface, the following jump conditions are imposed for the electrical field $\bfu^{\pm}$ at the interface:
\begin{subequations}
 \label{inter_jc}
\begin{align}
[\bfu \times \bfn]_{\Gamma} &:=  \bfu^+\times \bfn -  \bfu^-\times \bfn = 0,  \label{inter_jc_1} \\
[\alpha \curl\, \bfu\times \bfn]_{\Gamma} &:=  \alpha^+\curl\, \bfu^+\times \bfn -   \alpha^- \curl\, \bfu^-\times \bfn   = 0,
\label{inter_jc_2} \\
[\beta \bfu\cdot \bfn]_{\Gamma} &:=  \beta^+ \bfu^+\cdot \bfn -   \beta^- \bfu^-\cdot \bfn   = 0.
\label{inter_jc_3}
\end{align}
\end{subequations}
In the problem above, $\beta$ has a similar physical meaning to that in the $H^1$ interface problem, for example, to represent the electric conductivity. 
\RG{In addition, the condition \eqref{inter_jc_1} comes from the tangential continuity of electrical fields. 
As $\alpha \curl \bfu$ describes the temporal change of magnetic fields, 
the condition \eqref{inter_jc_2} is related to the tangential continuity of magnetic fields.}  
In fact, such physical relations are naturally encoded in a de Rham complex, see the discussion in Section \ref{sec:Sspace}. 

Electromagnetic interface problems are of great importance due to a large variety of science and engineering applications. 
Typical examples include electromagnetic motors and actuators involving metal-air or metal-metal interface~\cite{1996BrauerRuehl,2014ErcanJaewook} 
and electromagnetic inverse scattering~\cite{1996ColtonKress,2003Monk} that use electromagnetic waves to detect objection. 
Solving the $\bfH(\curl)$ interface problem with optimal convergence is a challenging goal that conventional unfitted-mesh methods fail to meet(e.g., see the discussion in Section \ref{subsec:challenges}), 
and trying to overcome this difficulty is the main motivation for the present research.

For only the purpose of completeness, the $\bfH(\div)$ interface problem is given by
\begin{equation}
\label{model_Hidv}
- \nabla \div(\bfu) + \alpha \bfu = \bff\quad \text{ in }  \Omega^-  \cup \Omega^+, 
\end{equation}
with $\bff \in \bfH(\curl;\Omega)$, subject to a certain boundary condition on $\partial \Omega$ and the jump conditions
\begin{subequations}
\label{Hdiv_jump}
\begin{align}
       [\bfu\cdot\bfn]_{\Gamma} & := \bfu^+\cdot\bfn - \bfu^-\cdot\bfn = 0, \\
    [\alpha  \bfu \times \bfn]_{\Gamma} &:=  \alpha^{+} \bfu^{+} \times \mathbf{ n}  - \alpha^{-} \bfu^{-} \times \mathbf{ n} = \mathbf{ 0}, \\
  [\div(\bfu)]_{\Gamma} : &= \div(\bfu^+) - \div(\bfu^-) = 0.
\end{align}
\end{subequations}
The system comes from a mixed finite element method with a gradient formulation~\cite{1997ARNOLDRICHARDWINTHER}. 
The related $\bfH(\div)$ interface problem and $\bfH(\div)$-immersed element have been discussed in Ref.~\refcite{2010HiptmairLiZou,2022JiImmersed} 
and {thus will not be the focus of this work}. 
The parameter $\alpha$ here is inherited from the $\bfH(\curl)$ case.

\subsection{Challenges of electromagnetic interface problems on unfitted meshes}
\label{subsec:challenges}

For conforming finite element methods (FEMs) to perform optimally, the mesh has to fit or approximate the interface geometry ``well enough''. 
However, an efficient high-quality 3D mesh generation itself remains a challenging problem, which is particularly expensive for complicated geometries (see e.g., Chapter 5.6 in Ref.~\refcite{2014LoFinite}). 
A promising solution, to alleviate the difficulty in mesh generation, is to generate a cheap background unfitted mesh, and then to further triangulate those elements cut by the interface~\cite{2009ChenXiaoZhang}. 
The modification is highly efficient since this extra procedure only needs to be done locally around the interface. 
\RG{However, this approach in general cannot yield shape-regular elements near the interface; instead the shape regularity of triangulation is relaxed to the maximum angle condition.} 
The interpolation estimates based on the maximum angle condition have been widely studied for Lagrange elements~\cite{1976BabuskaAziz}, Raviart-Thomas elements~\cite{1999AcostaRicardo,1985Donatella}, and 3D N\'ed\'elec elements~\cite{2005BuffaCostabelDauge}.
This approach is very successful in the two-dimensional (2D) case~\cite{2015ChenwuXiao} as an admissible local triangulation satisfying the maximum angle condition always exists for a shape-regular background mesh, e.g., see Lemma 3.1 in Ref. \refcite{2021CaoChenGuo} and Proposition 2.4 in Ref. \refcite{2017ChenWeiWen}. 
Nevertheless, in the 3D case, these locally re-meshed triangulations may not form a globally admissible mesh, as it may not necessarily satisfy the Delaunay property and/or the maximum angle condition due to the existence of slivers~\cite{2000Pflaum}.

To overcome the difficulty of 3D mesh generation, Ref.~\refcite{2017ChenWeiWen} proposed a novel method, which uses polyhedra as interface-fitted elements cut from a background Cartesian mesh, rather than to further triangulate to obtain a tetrahedral mesh. 
To handle the discretization on polyhedra, a virtual element method (VEM)~\cite{Beirao-da-Veiga;Brezzi;Cangiani;Manzini:2013principles} is used. 
In fact, $\bfH(\curl)$ virtual element spaces have been constructed and applied to Maxwell's equations in Ref. \refcite{BEIRAODAVEIGA2021,2017VeigaBrezziDassiMarini,2016VeigaBrezziMarini,2020BeiroMascotto}. 
However, the analysis for $\bfH(\curl)$ problems is quite a different story. Some more recent error analysis for VEM, e.g., the ones developed in Ref.~\refcite{beirao2017stability,2018BrennerSung,Cao;Chen:2018Anisotropic}, cannot be directly used to obtain even optimal error estimates, 
and some more delicate techniques are needed on an ad hoc basis, e.g.,Ref.~\refcite{BEIRAODAVEIGA2021,2021CaoChenGuoIVEM,2017VeigaBrezziDassiMarini}. 
For interface problems, an extra layer of difficulty is to make error bounds robust with respect to potential anisotropic element shapes. 
In Ref.~\refcite{Cao;Chen:2018Anisotropic}, a more rigorous analysis is given on anisotropic elements generated from Cartesian meshes cut by the interface for the 2D $H^1$ case.

Meanwhile, on unfitted meshes, another direction to circumvent the mesh generation issue is to modify finite element (FE) spaces such that the new spaces can capture the jump behaviors in an optimal sense. 
There have been extensive works in this direction including immersed finite element (IFE) methods~\cite{2015AdjeridChaabaneLin,2015LinLinZhang,2010GongLi}, CutFEM or Nitsche's penalty methods~\cite{Bordas_Burman_Larson_Olshanskii2017,2015BurmanClaus,1971Nitsche,2010WuXiao,2020LiuZhangZhangZheng}, multiscale FEMs~\cite{2010ChuGrahamHou} and so on, which are widely applied to various interface problems. 
We also refer readers to FDTD methods~\cite{2004ZhaoWei} based on finite difference formulation for Maxwell's equations with material interfaces. 
For almost all the unfitted-mesh methods in the literature, the FE space modification is usually applied element-wise or piecewise relative to the interface. 
Thus, this practice results in discontinuities across \RG{interface} elements' non-interface boundaries or at the interface. 
Such non-conformity can be handled by penalties on element boundaries to impose continuity such as in the IFE methods~\cite{2007GongLiLi,2016GuoLin,2015LinLinZhang}, or on the interface itself to impose jump conditions such as the Nitsche's methods~\cite{Bordas_Burman_Larson_Olshanskii2017,2015BurmanClaus,1971Nitsche,2010WuXiao,2020LiuZhangZhangZheng}. 
With suitable penalties, robust optimal convergence rates can be indeed obtained for the $H^1$-type interface problems \eqref{model_H1}, but to the authors' best knowledge, not the considered electromagnetic interface problem \eqref{model}.

Compared with the analysis for $H^1$ problems, the most drastic difference stems from the underlying Sobolev space $\bfH^s(\curl;\Omega)$. 
In particular, for many non-conforming and discontinuous Galerkin (dG)-type methods, 
one needs to estimate the penalty term which, by the standard techniques (e.g., see Lemma 5.52 in~Ref. \refcite{2003Monk}), leads to estimates as follows
\begin{equation}
\label{h-1stab}
h^{-1/2}\| \bfu - \pi_F \bfu \|_{\bfL^2(F)} \lesssim h^{s-1} \| \bfu \|_{\bfH^s(\curl;K)},
\end{equation}
where $\pi_F$ is a certain projection operator on a face $F$ of an element $K$. 
The order in \eqref{h-1stab} implies that even a moderate regularity $s=1$ yields no approximation order due to the presence of a penalty/stabilization term in the form of $h^{-1}\int_F [\mathbf{u}_h\times \mathbf{ n}]\cdot [\mathbf{ v}_h\times \mathbf{ n}] \dd s$. 
\RG{For standard dG methods, the work in Ref.~\refcite{2004HoustonPerugiaSchotzau,2005HoustonPerugiaSchneebeli} can circumvent the suboptimality, 
and the analysis relies on a $\bfH(\curl)$-conforming subspace of the broken dG space on tetrahedral meshes.}
However, for unfitted-mesh methods for interface problems in the literature, this problem becomes more severe, since such a conforming subspace may not exist. 
Numerically, the loss of convergence has been observed and reported in a series of works~\cite{2016CasagrandeHiptmairOstrowski,2016CasagrandeWinkelmannHiptmairOstrowski,2020GuoLinZou} for the $\bfH(\curl)$ interface problem. 
\RG{In Section \ref{sec:hcurl-examples}, we also present one numerical example to show that a penalty-type IFE method cannot achieve optimal convergence.} 
So we believe that this difficulty is essential rather than caused by the limitation of analysis techniques.

The scaling factor $h^{-1}$ in this essential issue is commonly used for stabilization in dG methods, 
but shows to be too ``strong'' for the space $\bfH^1(\curl;\Omega)$. 
In fact, for a Lipschitz domain $D$, it is well-known that the trace of $H^1(D)$ is in $H^{1/2}(\partial D)$. 
While for $\bfH(\curl;D)$, the tangential trace is merely in $\bfH^{-1/2}(\div;\partial D)$, 
which should lead to different scaling factors for the stabilization terms on faces. 
Here we refer readers to Ref. \refcite{2008BrennerCuiLiSung} for the analysis of the relation between the scaling factor of a non-conforming method and function's regularity, and more recently a weighted Sobolev space treatment~\cite{2022BarkerCaoSternnonconforming}. 
In summary, the scaling factor, which is traditionally viewed to be strong enough to ensure stability for $H^{1}$ problems, leads to suboptimal convergence in non-conforming methods for $\bfH(\curl)$ problems.
On the contrary, various conforming VEMs~\cite{BeiraodaVeigaBrezziDassiEtAl2018Lowest,BEIRAODAVEIGA2021,2021CaoChenGuo,2020BeiroMascotto},
can use a correct scaling $h$ in \eqref{h-1stab} to achieve optimal convergence, but they are not easy to adapt to 3D anisotropic meshes. 
More recently in Ref. \refcite{2022Guomaximum}, the virtual element method for the 3D $H^{1}$-interface problem is analyzed under the setting of anisotropic meshes near the interface.

In conclusion, developing unfitted-mesh methods for the $\bfH(\curl)$ interface problem is much more challenging than its $H^1$ counterpart. 
For non-matching mesh methods, some work can obtain optimal convergence under the usual $\bfH^1(\curl)$-regularity by making a certain assumption of meshes being coupled at the interface, see Ref. \refcite{2008HuShuZou,2000ChenDuZou}.
For many unfitted-mesh methods, the meshes or spaces are generally completely broken, 
then at least the $H^2$-regularity has to be assumed to achieve optimal convergence, e.g., see Ref. \refcite{2001BenBuffaMaday,2020LiuZhangZhangZheng}. 
In Ref.~\refcite{2020GuoLinZou} for the 2D case and Ref.~\refcite{2022ChenGuoZoufamily} for the 3D case, Petrov-Galerkin methods are developed that can achieve optimal convergence, but results in a non-symmetric scheme. 
A robust optimal convergence for VEM is established in Ref.~\refcite{2021CaoChenGuo}, but it relies on a ``virtual'' triangulation satisfying the maximum angle condition, which may not be available in 3D. 
Therefore, to our best knowledge, currently there seems no satisfactory methodology for the 3D $\bfH(\curl)$ interface problem considered.

\subsection{A Novel Method}
To develop unfitted-mesh methods for the $\bfH(\curl)$ interface problem, based on the discussion above, it is preferable to use a conforming space. In the meantime, this space must admit sufficient approximation capabilities robust with respect to the anisotropy of subelements. 
This consideration motivates us to combine the conformity of virtual element spaces and the approximation capabilities of IFE spaces. 
In our recent work~\cite{2021CaoChenGuoIVEM}, we have successfully realized this idea for the 2D case, which is referred to as immersed virtual element (IVE) methods.

The fundamental idea is to impose local PDEs on interface elements to enforce both conformity and jump condition of which the solutions are used as the spaces for discretization. 
The IVE spaces can be understood as a special family of $H^1$, $\bfH(\curl)$ and $\bfH(\div)$ virtual element spaces~\cite{Beirao-da-Veiga;Brezzi;Cangiani;Manzini:2013principles,2017VeigaBrezziDassiMarini,2016VeigaBrezziMarini,BEIRAODAVEIGA2021} with discontinuous coefficients. 
For the $H^1$ case, it is also exactly the space of special FEMs by Babu\v{s}ka et al.~\cite{1994BabuskaCalozOsborn,1983BabuskaOsborn} for a simple 1D case, 
and it becomes the multiscale FE space~\cite{2010ChuGrahamHou} for higher dimensional cases where the local PDEs are solved on sub-grids. 
The similar idea was also employed in the enriched IFE method~\cite{2020AdjeridBabukaGuoLin}.
The proposed IVE discretization follows the meta-framework of VEM: the local PDEs need not be solved exactly, certain projections are computed instead with sufficient approximation capability to capture the jump conditions. 
It can successfully yield optimal convergence rates for the $\bfH(\curl)$ interface problem, which has been rigorously proved in the 2D case~\cite{2021CaoChenGuoIVEM}. 
In this work, we focus on the development of the IVE spaces, the scheme, and the implementation in the 3D case. 
We leave the theoretical part to another upcoming work \LC{as a rigorous error analysis involves much more technicalities in 3D and is not a trivial generalization of that in 2D}.

\RG{Developing 3D IVE spaces is significantly more complicated than the 2D case, especially for the $\bfH(\curl)$ space. }
An immediate question is how to design appropriate $\div$-$\curl$ systems as local problems with discontinuous coefficients that have a rigorous well-posedness.  
Here, special attention must also be paid to designing the local problems such that their solutions have computable projections to IFE spaces. 
The key is to modify the source terms for the local problems and to construct certain weighted projections with regard to the weights as Hodge star operators. 
The second issue is to design suitable trace spaces on element boundaries, in which the functions serve as the boundary conditions for those local problems. 
The trace spaces need to provide sufficient and robust approximation properties. 
In the 2D case, the boundary space consists of piecewise constants or linear functions on each edge, 
the simpleness of which attributes to the trivial geometry of the element boundaries, see Ref.~\refcite{Beirao-da-Veiga;Brezzi;Cangiani;Manzini:2013principles,Cao;Chen:2018Anisotropic,2021CaoChenGuoIVEM}. 
However, in the 3D case, it becomes much more obscure. 
For the classical virtual spaces~\cite{Beirao-da-Veiga;Brezzi;Cangiani;Manzini:2013principles,2017VeigaBrezziDassiMarini,2016VeigaBrezziMarini,BEIRAODAVEIGA2021}, 
the trace spaces are generally formulated by solutions of some extra 2D local problems defined on polygonal faces. 
In this work, we propose a rather different yet simpler approach: 
to use the standard FE spaces defined on a 2D triangulation satisfying the maximum angle condition on each element face. 
Such a triangulation not only benefits the robust approximation property due to the maximum angle condition, 
but also facilitates the code development leading to an efficient implementation with suitable data structures. 
\LC{In summary, on the boundary faces, we opt for an interface-fitted 2D triangulation and use local problems to extend the shape functions to the interior of each interface element.} 
Hence, the present research is, in fact, a combination of the three classical methodologies: VEM, IFE and FEM, towards solving the challenging electromagnetic interface problem \LC{efficiently}.

Although our focus is on electromagnetic interface problems, we shall develop a systematic framework for all the $H^1$, $\bfH(\curl)$ and $\bfH(\div)$ interface problems contributing to a solid mathematical foundation. 
They are connected by the following de Rham complex, and are shown to have the usual nodal, edge, and face degrees of freedom (DoFs), through which both the exact sequence and the commutative property can be established.

\begin{equation}
   \label{IVE_deRham}
\adjustbox{scale=0.9,center}{%
\begin{tikzcd}
  \mathbb{R} \arrow[r,"\hookrightarrow",shorten >= 0.5ex] 
   & H^2(\beta;\mathcal{T}_h) \arrow[d, "I^n_h"] \arrow[r, "\grad"] 
   & \bfH^1(\curl,\alpha,\beta;\mathcal{T}_h) \arrow[d, "I^e_h"] \arrow[r, "\curl"] 
   & \bfH^1(\div,\alpha;\mathcal{T}_h) \arrow[d, "I^f_h"] \arrow[r, "\div"] 
   & H^1(\mathcal{T}_h) \arrow[d, "\Pi^0_h"] \arrow[r] & 0 
   \\
  \mathbb{R} \arrow[r, "\hookrightarrow",shorten >= 1ex] 
   & V^n_h \arrow[r, "\grad", shorten <= 1ex, shorten >= 1ex] 
   & \bfV^e_h \arrow[r, "\curl", shorten <= 1ex, shorten >= 1ex] 
   & \bfV^{f}_h \arrow[r, "\div", shorten <= 1ex, shorten >= 1ex] 
   & Q_h \arrow[r, shorten <= 1ex] & 0
\end{tikzcd}
}
\end{equation}

Another major challenge for 3D interface problems is an appropriate fast solver. Multigrid methods are widely used, and we refer readers to Ref.~\refcite{2016XuZhang} for FEM and Ref.~\refcite{2017ChenWeiWen} for VEM, both of which study the $H^1$-interface problem. 
For $H(\curl)$ equations, the fast solvers are even more challenging~\cite{2011XuZhu,2007HiptmairXu} due to the non-trivial kernel space \LC{of the $\curl$ operator}, which is another motivation to lay out the de Rham complex \eqref{IVE_deRham} for the proposed VEM spaces.
In this work, we generalize multigrid-based Hiptmair-Xu (HX) preconditioner~\cite{2007HiptmairXu,ChenWuEtAl2018Multigrid} from regular $\bfH(\curl)$ problem to the interface case. 
Moreover, for fitted mesh methods, the condition numbers may still suffer from the possible anisotropic element shapes, even though the error bounds are robust. In Ref.~\refcite{2016XuZhang}, 
the DoFs near the interface and in the background mesh are split to form ``fine-coarse'' block matrices, thus an optimal two-level solver is developed. 
In this paper, a block diagonal smoother is proposed to handle the anisotropic element shape near the interface, similar to the practice in Ref.~\refcite{2016XuZhang}. 
To our best knowledge, this is the first research towards applying the HX preconditioner to VEM, and also the first fast solver for unfitted-mesh methods for solving electromagnetic interface problems. 
Numerical results demonstrate that the solver is robust with respect to \LC{both the mesh size and the shape of small-cutting elements}.

This article has additional $6$ sections. In the next section, we introduce the meshes, focusing especially on the element boundary triangulation. 
In Section \ref{sec:Sspace}, we describe the desired Sobolev spaces encoding jump conditions that are to be approximated. 
In Sections \ref{sec:IFE} and \ref{sec:IVE}, we introduce IFE spaces and IVE spaces, respectively. 
In Section \ref{sec:IVEscheme}, we describe the computation scheme, fast solvers, and implementation aspects. 
In the last section, we present a group of numerical experiments.

\section{Meshes}

In this article, we focus on a given interface-independent and shape-regular tetrahedral mesh of $\Omega$, 
but the proposed method can be also adapted to any Cartesian cubic meshes. 
This tetrahedral mesh is referred to as the background mesh, and is denoted by $\mathcal{T}_h$.
If an element in $\mathcal{T}_h$ intersects the interface, then it is called an interface element, or a non-interface element otherwise. 
The collection of interface elements is denoted as $\mathcal{T}^{i}_h$. 
For each $K\in\mathcal{T}^i_h$, we denote $\Gamma^K = \Gamma\cap K$. 
Let $\mathcal{T}^{n}_h$ be the set of non-interface elements. \LC{All elements are considered as open sets}.

One of the critical ingredients to formulate the $H^1$, $\bfH(\curl)$ and $\bfH(\div)$ local interface problems is to impose appropriate boundary conditions on element boundary. 
Different from the prevailing approach in the literature, 
the authors in Ref. \refcite{2017ChenWeiWen} proposes a novel approach by using exclusively square and triangular faces in interface polyhedra. 
In Ref.~\refcite{2017ChenWeiWen}, a Delaunay triangulation routine is called for the nodes in the background mesh, interface nodes, and some added vertices near the interface, 
then the triangular faces are extracted for their corresponding polyhedra. 
In this work, similar to the practice in Ref.~\refcite{2017ChenWeiWen}, standard FE functions on a 2D triangulation of any given element face are used as the boundary conditions. 
Hence, we make a fundamental assumption called \textit{interface fitted boundary triangulation}:
\begin{itemize}
  \item[(\textbf{A})] \label{asp:A} For each interface element $K$, each of its face admits a triangulation satisfying the maximum angle condition. 
  The triangles are formed by only the vertices of $K$ and/or the cutting points of the original interface, i.e., there are no newly-added interior vertices to form the edges. 
  If a face is cut by the interface, then this triangulation must be fitted to the interface \LC{in the sense that the curve (the intersection of the face and interface) is approximated by an edge of this triangulation with error in the order of $\mathcal O(h_K^2)$.}
\end{itemize} 
We illustrate the Assumption \hyperref[asp:A]{A} in Figure~\ref{fig:face_mesh}: 
each face of an interface element is partitioned into multiple triangles by the newly added edges including the one connecting the cutting points (red points in the figure). 
It can be understood that a local 2D fitted mesh satisfying the maximum angle condition is generated around the interface but only on faces. 
Although, as aforementioned in the introduction, generating a 3D interface-fitted mesh may be difficult or even impossible in certain situations, 
it is much easier to generate a 2D interface-fitted mesh. 
In particular, since the considered original background tetrahedral meshes only have triangular faces,
the boundary triangulation with the maximum angle condition is always guaranteed by~Lemma 3.1 in Ref. \refcite{2021CaoChenGuo}. 
See, for example, the left plot in Figure \ref{fig:face_mesh} if the element is cut by the interface only once. 
If cubic meshes are used, then~Proposition 2.4 in Ref. \refcite{2017ChenWeiWen} guarantees an admissible \textit{boundary triangulation}. 
But unlike Ref.~\refcite{2017ChenWeiWen} where the element is divided into two polyhedrons, here the cut tetrahedron is still treated as one element with more than $4$ triangular faces. 

We highlight that the \textit{interface fitted boundary triangulation} is able to link the fitted 2D and unfitted 3D meshes, 
and also bridges the standard 2D FE spaces and 3D virtual element spaces. 
Our previous work in the 2D case~\cite{2021CaoChenGuoIVEM} suggests that it is also one of the keys to overcome suboptimal convergence caused by non-conforming spaces for Maxwell's equations, 
as well as help in anisotropic analysis for the virtual spaces. 
In addition, the proposed \textit{boundary triangulation}, in fact, greatly benefits the computation. 
One of the difficult aspects of implementing polytopal finite element approximation is the ever-changing number of DoFs in an element. 
In our approach, since only triangular faces are present in every interface or non-interface element, 
the assembling can be uniformly handled by fixed-width matrices in the face-oriented data structure, 
please refer to Section~\ref{subsec:implement} for details, see also Ref.~\refcite{2017ChenWeiWen,BeiraodaVeigaBrezziDassiEtAl2018Lowest} for a face-based approach.

Another advantage of the proposed method is the flexibility to handle complex interface element geometry. 
Specifically, on elements that are cut by interface multiple times, e.g., see the right plot in Figure~\ref{fig:face_mesh}, 
the proposed IVE spaces can be easily constructed as long as an admissible \textit{boundary triangulation} can be constructed.

\begin{figure}
\centering
\includegraphics[width=1.2in,height=1.4in]{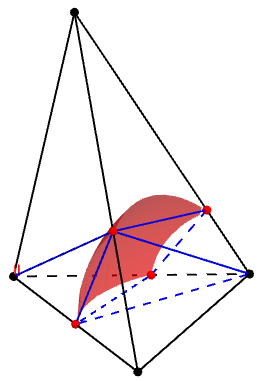}
~~
   ~~
   \includegraphics[width=1.2in,height=1.4in]{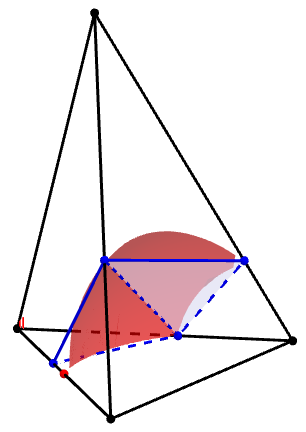}
   ~~
   \includegraphics[width=1.7in]{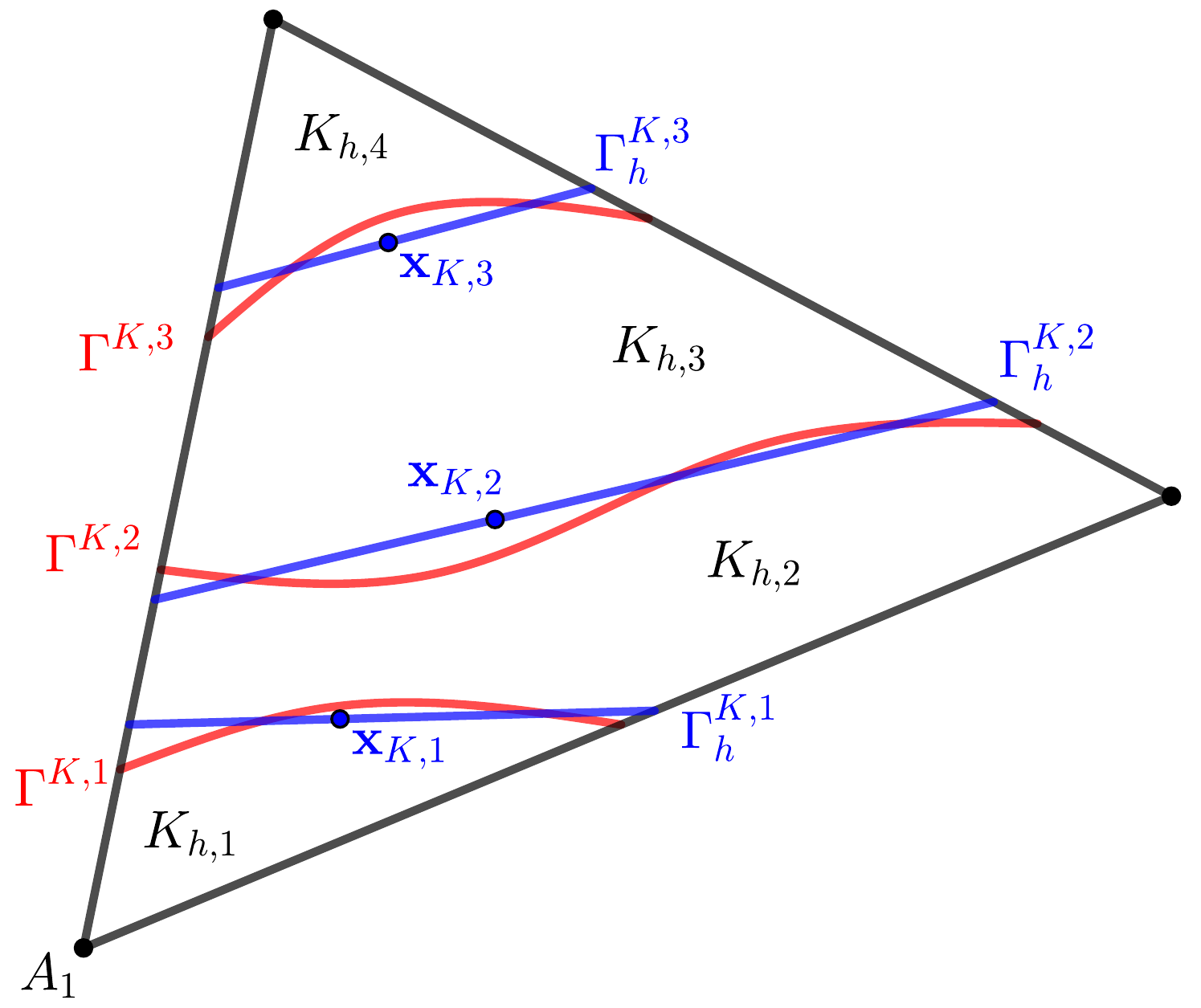}
\caption{Left: illustration of boundary triangulation of Assumption \hyperref[asp:A]{A}. Middle: an approximate plane $\Gamma^K_h$ to $\Gamma^K$. Right: 2D illustration of an element cut by the interface multiple times.}
\label{fig:face_mesh}
\end{figure}

\section{Some Sobolev Spaces and Well-posedness}
\label{sec:Sspace}

In this section, we describe a group of modified Sobolev spaces that incorporate the interface conditions. 
Let us first recall some standard spaces. 
Given an \LC{open} subdomain $D\subseteq \Omega$, for $s \ge 0$, we let $H^s(D)$ be the standard scalar Sobolev space and $\bfH^s(D) := (H^s(D))^3$. 
Now introduce %
\begin{subequations}
\label{sob_space}
\begin{align}
    & \bfH^s(\curl;D) = \{ \bfu\in \bfH^s(D): \curl \, \bfu\in \bfH^s(D) \},  \\
    & \bfH^s(\div;D) = \{ \bfu\in \bfH^s(D): \div \, \bfu\in H^s(D) \}.
\end{align}
\end{subequations}
If $D\cap\Gamma \neq \emptyset$, we let $D^{\pm}=\Omega^{\pm}\cap D$, and further let $H^s(\cup D^{\pm})$, $\bfH^s(\curl;\cup D^{\pm})$ and $\bfH^s(\div;\cup D^{\pm})$ consist of functions that belong to the corresponding spaces on each $D^{\pm}$ but without any conditions on $\partial D^{\pm}$.

With the mesh $\mathcal{T}_h$ we are ready to define the interface-encoded Sobolev spaces:
\begin{subequations}
\label{tildeHspace}
\begin{align}
    H^2(\beta;\mathcal T_h) = & \, H^1(\Omega)  \cap  \{ u \in H^2(\cup K^{\pm}) : \beta\nabla u \in \bfH(\div;K),  \, \forall K\in\mathcal{T}_h \},  \label{tildeHspace1} \\
    \bfH^1(\curl,\alpha,\beta;\mathcal T_h)  =  &\, \bfH(\curl;\Omega) \cap \{ \bfu \in \bfH^1(\curl;\cup K^{\pm}) : \beta \bfu \in \bfH(\div;K), \notag \\
    & \quad\quad\quad\quad\quad\quad\quad\quad \alpha\curl\,\bfu \in \bfH(\curl;K), \, \forall K\in\mathcal{T}_h  \},  \label{tildeHspace2}    
    \\
    \bfH^1(\div,\alpha;\mathcal T_h)  = & \, \bfH(\div;\Omega) \cap   \{ \bfu\in \bfH^1(\div; \cup K^{\pm}) : \alpha \bfu \in \bfH(\curl;K), \notag \\
&\quad\quad\quad\quad\quad\quad\quad\quad \div \bfu\in H^1(K),  \, \forall K\in\mathcal{T}_h  \},   \label{tildeHspace3}  
    \\
    H^{1}(\mathcal T_h) = & \, L^2(\Omega)\cap \{u\in H^1(K), \, \forall K\in\mathcal{T}_h \}. \label{tildeHspace4} 
\end{align}
\end{subequations}
Note that, on a non-interface element $K$, the conditions are trivial since they are just consequences of $H^2(K)$, $\bfH^1(\curl;K)$ and $\bfH^1(\div;K)$. 
On an interface element $K$ those conditions exactly encode both the conformity and interface information. 
To see the relation more clearly, we let $H^2(\beta;K)$, $\bfH^1(\curl,\alpha,\beta;K)$, $\bfH^1(\div,\alpha;K)$ and $H^1(K)$ be the local spaces on $K$ of their global counterparts in~\eqref{tildeHspace}. 
The spaces above are just constructed so that the following diagram is well-defined:

\begin{equation}
   \label{Hodge_H_local}
\adjustbox{scale=0.9,center}{%
\begin{tikzcd}
  \mathbb{R} \arrow[r,"\hookrightarrow"] 
   & H^2(\beta;K) \arrow[d, "I"] \arrow[r, "\nabla"] 
   & \bfH^1(\curl,\alpha,\beta;K) \arrow[d, "\beta"] \arrow[r, "\curl"] 
   & \bfH^1(\div,\alpha;K) \arrow[d, "\alpha"] \arrow[r, "\div"] 
   & H^1(K) \arrow[d, "I"] \arrow[r]
   & 0
   \\
0 
   &    L^2(K) \arrow[l] 
   & \bfH(\div;K) \arrow[l, "\div"', shorten <= 1ex, shorten >= 1ex] 
   & \bfH(\curl;K)\arrow[l, "\curl"', shorten <= 1ex, shorten >= 1ex] 
   & H^1(K) \arrow[l, "\nabla"', shorten <= 1ex, shorten >= 1ex] 
   & \mathbb R \arrow[l,"\hookleftarrow"', shorten >= 1ex]
\end{tikzcd}
}
\end{equation}
In Diagram \eqref{Hodge_H_local}, $\alpha$ and $\beta$ could be understood as Hodge star operators mapping $k$-forms to $(3-k)$-forms for $k=2,1$, respectively. 
Take $\beta: \bfH^1(\curl,\alpha,\beta;K)  \to  \bfH(\div;K)$ as an example. A function $\bfu$ in $\bfH^1(\curl,\alpha,\beta;K)$ can be thought of as a vector proxy of a $1$-form. 
Then $\beta \bfu \in \bfH(\div;K)$ is a $2$-form. The jump conditions \eqref{inter_jc} on the interface are from the continuity of the mapped forms. 
Construction of the desired virtual spaces is to mimic this diagram in the discretized level.

In the following discussion, given any face $F$ in the mesh, we shall denote the tangential component of $\bfu$ by $\bfu^{\tau}|_F$ for admissible $\bfu$ defined in the bulk $\omega$ such that $F\subseteq \partial \omega$, 
and we will drop $|_F$ if there is no danger of causing confusion. 
In addition, we will also frequently use the 2D rotation operator denoted by $\rot_F$ on $F$. 
Let $\nabla_{F}$ denote the surface gradient. Then, for a function $\bfvarphi$ defined on $F$, $\rot_F$ is defined in the distributional sense such that 
\begin{equation}
\langle \rot_{F} \bfvarphi, v \rangle_{F} :=  (\bfvarphi, \nabla_{F} v \times \bfn)_{F}, \quad \forall v\in H_0^1(F).
\end{equation}
For each subdomain $\omega\subseteq\Omega$, and $\bfvarphi$ defined on $\partial \omega$, $\rot_{\partial \omega} \bfvarphi$ can be defined similarly, 
\begin{equation}
\langle \rot_{\partial \omega} \bfvarphi, v \rangle_{\partial \omega} :=  (  \bfvarphi, \nabla_F v  \times \bfn)_{\partial \omega}, \quad \forall v\in H^1(\omega),
\end{equation}
while it can be verified that $\rot_{\partial \omega} \bfvarphi|_F = \rot_{F} \bfvarphi$ \LC{when $\rot_{\partial \omega} \bfvarphi\in L^2(\omega)$}. Here
$\langle \cdot, \cdot \rangle_{\partial \omega}$ denotes the usual pairing between $H^{-1/2}(\partial \omega)$--$H^{1/2}(\partial \omega)$, 
and $\langle \cdot, \cdot \rangle_{F}$ is defined similarly for $F\subset \partial \omega$. 
In particular, the well-known formula states%
\begin{equation}
\label{rotcurl}
\curl \bfu \cdot\bfn_F = \rot_F \bfu,  ~~\text{ for }\bfu \in \bfH(\curl;\omega), ~~~ \text{on} ~ F
\subset \partial \omega,
\end{equation}
where $\bfn_F$ is the exterior unit normal vector of $F$ with respect to $\omega$. 

Note that the proposed global problems as well as the local problems all involve discontinuous coefficients. 
In order to pursue a rigorous definition of the IVE spaces, 
we discuss the well-posedness of some $\div$-$\curl$ systems with discontinuous coefficients. 
The systems with constant coefficients are discussed in~Ref. \refcite{1998AmroucheBernardiDaugeGirault}, and the results with general coefficients can be found in Ref.~\refcite{1982Saranen,1983Saranen,FernandesGilardi1997Magnetostatic}. 
But here we present more detailed analysis to show that the constants in the {\it a priori} estimates are independent of interface location. 

\begin{lemma}
\label{lem_divcurl}
Let $\omega$ be a Lipschitz polyhedral domain which is simply-connected, 
let an interface $\gamma$ separate $\omega$ into $\omega^{\pm}$, 
and define a piecewise constant function $b=b^{\pm}>0$ in $\omega^{\pm}$. 
For the data functions $\bff\in \bfH(\div;\omega)\cap\ker(\div)$, $h\in L^2(\omega)$ and $g\in H^{-1/2}(\partial \omega)$ 
such that the compatibility condition holds:
\begin{equation}
\label{lem_divcurl_com_eq}
\int_{\omega} h \dd x = \langle g, 1\rangle_{\partial \omega},
\end{equation}
then the following problem admits a unique solution $\bfvarphi\in\bfH(\div;\omega)$ and $b\bfvarphi\in\bfH(\curl;\omega)$ 
\begin{equation}
\label{lem_divcurl_eq0}
\curl\, (b  \bfvarphi) = \bff, ~~~ \div( \bfvarphi) = h ~~ \text{in} ~\omega, ~~~ \bfvarphi\cdot\bfn = g \quad \text{ on } \partial \omega.
\end{equation}
If additionally $g\in L^2(\partial\omega)$, the stability result holds:
\begin{equation}
\label{lem_divcurl_eq01}
b_{\min}(1+b_{\max})^{-1}\| \bfvarphi \|_{L^2(\omega)} \le C_{\omega} ( \| \bff \|_{L^2(\omega)} + \| h \|_{L^2(\omega)} + \| g \|_{L^2(\partial\omega)}),
\end{equation}
where $b_{\min} = \min\{b^-,b^+\}$, $b_{\max} = \max\{b^-,b^+\}$ and the constant $C_{\omega}$ only depends on the geometry of $\omega$. 
If $\omega$ is star-convex with respect to a ball of the radius $\rho_{\omega}$, then $C_\omega= C(h_{\omega}/\rho_{\omega})$ where $h_\omega$ is the diameter of $\omega$.
Furthermore, if $g\in H^{1/2}(\partial \omega)$, $\omega$ is convex, 
and $\gamma$ is a closed surface that is sufficiently smooth and does not intersect the boundary, 
then $\bfvarphi\in \bfH^1(\cup\omega^{\pm})$. 
\end{lemma}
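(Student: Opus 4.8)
The plan is to reduce the system \eqref{lem_divcurl_eq0} to one weight-free regular vector potential for $\bff$ together with two scalar weighted Neumann problems for $b^{-1}$, and then to estimate everything by hand so that the dependence on $b$ and on the geometry of $\omega$ becomes explicit. \emph{Existence and uniqueness.} Since $\omega$ is Lipschitz and simply-connected and $\bff\in\bfH(\div;\omega)\cap\ker(\div)$, the classical regular-potential construction (Ref.~\refcite{1998AmroucheBernardiDaugeGirault}, or the regularized Poincaré/Bogovskii operators) gives $\bfz\in\bfH^1(\omega)$ with $\curl\bfz=\bff$, $\div\bfz=0$ and $\|\bfz\|_{L^2(\omega)}\le C_\omega\|\bff\|_{L^2(\omega)}$. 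I then solve in $H^1(\omega)/\mathbb{R}$ the Neumann problems $\int_\omega b^{-1}\nabla q_2\cdot\nabla v\dd x=\langle g,v\rangle_{\partial\omega}-\int_\omega hv\dd x$ and $\int_\omega b^{-1}\nabla\widetilde q\cdot\nabla v\dd x=-\int_\omega b^{-1}\bfz\cdot\nabla v\dd x$, for all $v\in H^1(\omega)$; the first is solvable precisely by the compatibility condition \eqref{lem_divcurl_com_eq}, the second since its right-hand side annihilates constants, and both are well posed by Lax--Milgram because $b^{-1}\ge b_{\max}^{-1}>0$. Setting $\bfvarphi:=b^{-1}(\nabla q_2+\bfz+\nabla\widetilde q)$ one reads off $\curl(b\bfvarphi)=\curl\bfz=\bff$; testing the two weak equations against $C_c^\infty(\omega)$ gives $\div\bfvarphi=h$ (so $\bfvarphi\in\bfH(\div;\omega)$); and testing against general $v\in H^1(\omega)$ and integrating by parts identifies $\bfvarphi\cdot\bfn=g$. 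For uniqueness, vanishing data give $\curl(b\bfvarphi)=0$, hence $b\bfvarphi=\nabla r$ with $r\in H^1(\omega)$ on the simply-connected $\omega$, and then $\int_\omega b^{-1}|\nabla r|^2\dd x=\int_\omega\bfvarphi\cdot\nabla r\dd x=-\int_\omega r\,\div\bfvarphi\dd x+\langle\bfvarphi\cdot\bfn,r\rangle_{\partial\omega}=0$, so $\bfvarphi=0$.

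\emph{Stability with sharp weight dependence.} Keeping the $(h,g)$-data decoupled from $\bff$ is exactly what yields the factor $b_{\min}^{-1}(1+b_{\max})$ rather than a worse $b_{\min}^{-2}b_{\max}$. Testing the $q_2$-equation with $v=q_2$ and using the Poincaré and multiplicative trace inequalities on $\omega$ gives $\|\nabla q_2\|_{L^2(\omega)}\le C_\omega\,b_{\max}\bigl(\|h\|_{L^2(\omega)}+\|g\|_{L^2(\partial\omega)}\bigr)$, hence $\|b^{-1}\nabla q_2\|_{L^2(\omega)}\le C_\omega\,b_{\min}^{-1}b_{\max}\bigl(\|h\|_{L^2(\omega)}+\|g\|_{L^2(\partial\omega)}\bigr)$. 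For the remaining part $\bfpsi:=b^{-1}(\bfz+\nabla\widetilde q)$ one checks $\div\bfpsi=0$ and $\bfpsi\cdot\bfn=0$, so the orthogonality $\int_\omega\nabla\widetilde q\cdot\bfpsi\dd x=-\int_\omega\widetilde q\,\div\bfpsi\dd x+\langle\bfpsi\cdot\bfn,\widetilde q\rangle_{\partial\omega}=0$ holds and
\[
b_{\min}\|\bfpsi\|_{L^2(\omega)}^2\le\int_\omega b\,|\bfpsi|^2\dd x=\int_\omega(\bfz+\nabla\widetilde q)\cdot\bfpsi\dd x=\int_\omega\bfz\cdot\bfpsi\dd x\le\|\bfz\|_{L^2(\omega)}\|\bfpsi\|_{L^2(\omega)},
\]
i.e.\ $\|\bfpsi\|_{L^2(\omega)}\le b_{\min}^{-1}\|\bfz\|_{L^2(\omega)}\le C_\omega\,b_{\min}^{-1}\|\bff\|_{L^2(\omega)}$. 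Since $\bfvarphi=b^{-1}\nabla q_2+\bfpsi$, adding the two bounds and using $b_{\min}^{-1},\ b_{\min}^{-1}b_{\max}\le b_{\min}^{-1}(1+b_{\max})$ yields \eqref{lem_divcurl_eq01}, with $C_\omega$ the maximum of the Poincaré, trace and regular-potential constants of $\omega$. When $\omega$ is star-convex with respect to a ball of radius $\rho_\omega$, scale-explicit versions of all three are controlled by $h_\omega/\rho_\omega$ alone, giving $C_\omega=C(h_\omega/\rho_\omega)$.

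\emph{Interior $\bfH^1$ regularity.} Write $\bfvarphi=b^{-1}\bfz+b^{-1}\nabla(q_2+\widetilde q)$. As $\bfz\in\bfH^1(\omega)$ and $b$ is piecewise constant, $b^{-1}\bfz\in\bfH^1(\cup\omega^\pm)$ is automatic, so it remains to prove $q_2,\widetilde q\in H^2(\cup\omega^\pm)$. Each solves an elliptic transmission problem: in $\omega^\pm$ one has $\Delta q_2=b^\pm h\in L^2(\omega^\pm)$ and $\widetilde q$ harmonic (using $\div\bfz=0$); the Dirichlet jumps vanish, $[q_2]_\gamma=[\widetilde q]_\gamma=0$; the conormal jumps are $[b^{-1}\partial_{\bfn_\gamma}q_2]_\gamma=0$ and $[b^{-1}\partial_{\bfn_\gamma}\widetilde q]_\gamma\in H^{1/2}(\gamma)$ (it is linear in $\bfz\cdot\bfn_\gamma$, the trace of the $\bfH^1$ field $\bfz$ on the smooth surface $\gamma$); and on $\partial\omega$ the Neumann data are $\partial_{\bfn}q_2=b^+g\in H^{1/2}(\partial\omega)$ and $\partial_{\bfn}\widetilde q=-\bfz\cdot\bfn\in H^{1/2}(\partial\omega)$, where $b\equiv b^+$ near $\partial\omega$ because $\gamma$ does not meet $\partial\omega$. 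Since $\omega$ is convex, $\gamma$ is smooth, and $\gamma$ stays away from $\partial\omega$, a partition of unity decouples the convex-polyhedral-boundary contribution from the smooth-interface one, and the classical regularity theory for elliptic interface/transmission problems (cf.\ Ref.~\refcite{1982Saranen,1983Saranen}) gives $q_2,\widetilde q\in H^2(\omega^+)\cap H^2(\omega^-)$; hence $\bfvarphi\in\bfH^1(\cup\omega^\pm)$.

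\emph{Where the difficulty lies.} Existence is essentially known; the delicate part is the sharpness of the constants. The factor $b_{\min}^{-1}(1+b_{\max})$ is recovered only by (i) never letting the weighted Laplace solve see $\bff$ --- otherwise the term $b_{\min}^{-2}b_{\max}\|\bff\|$ reappears --- and (ii) testing the energy identity for the divergence-free, zero-normal-trace part $\bfpsi$ against $\bfpsi$ itself, so that the gradient correction $\nabla\widetilde q$ is annihilated; any Helmholtz-type argument that routes $\bfz$ through a $b$-weighted Laplacian destroys this. Turning $C_\omega$ into an explicit $C(h_\omega/\rho_\omega)$ then needs scale-tracked Poincaré, trace and regular-vector-potential inequalities on star-convex domains. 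The transmission-regularity step is technical but standard once the $H^{1/2}(\gamma)$-membership of the conormal jump and the separation of $\gamma$ from the convex boundary are verified.
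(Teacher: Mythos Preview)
Your proof is correct and reaches the same stability constant, but the construction differs from the paper's in a useful way. The paper invokes Saranen's decomposition $\bfvarphi=b^{-1}\nabla v+\curl\bfw$, where $v$ solves the weighted Neumann problem for $(h,g)$ and $\bfw$ solves the vector system $\curl(b\curl\bfw)=\bff$, $\div\bfw=0$, $\bfw\times\bfn=\mathbf 0$; the stability of the $\bff$-part then requires the Friedrichs-type bound $\|\bfw\|\lesssim\|\curl\bfw\|$ (Monk, Corollary~3.51) before testing the curl--curl equation. You instead build $\curl\bfw$ directly as $\bfpsi=b^{-1}(\bfz+\nabla\widetilde q)$ from a regular vector potential $\bfz$ of $\bff$ and a scalar correction, and your energy identity $b_{\min}\|\bfpsi\|^{2}\le\int_\omega b|\bfpsi|^{2}=\int_\omega\bfz\cdot\bfpsi$ bypasses both the curl--curl system and the Friedrichs inequality entirely. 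This is more elementary and makes the star-convex constant tracking cleaner, since you only need scale-explicit Poincar\'e, trace, and regular-potential bounds. For the piecewise $\bfH^1$ regularity the two arguments essentially coincide: the paper also introduces a regular potential (there called $\tilde\bfw$) and reduces to a scalar transmission problem, so your route simply front-loads that step and reuses $\bfz$ throughout. The $q_2$-part is identical to the paper's $v$.
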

\begin{proof}
Since $\omega$ is assumed to be simply-connected, 
by~Theorem 1.1 in Ref. \refcite{1983Saranen}, we know the solution $\bfvarphi$ to \eqref{lem_divcurl_eq0} admits the following decomposition
\begin{equation}
\label{lem_divcurl_eq1}
\bfvarphi = b^{-1} \nabla v + \curl \bfw
\end{equation}
where $v$ is the solution to the equation
\begin{equation}
\label{lem_divcurl_eq2}
\div(b^{-1} \nabla v) = h~~ \text{in} ~ \omega, ~~~ b^{-1} \nabla v\cdot \bfn = g ~~ \text{on} ~ \partial\omega, ~~~ \int_{\omega}v \dd x =0,
\end{equation}
and $\bfw$ is the solution to the equation
\begin{equation}
\label{lem_divcurl_eq3}
\curl(b\curl \bfw) = \bff ~~ \text{in} ~ \omega,~~~ \div(\bfw) = 0~~ \text{in} ~ \omega, ~~~ \bfw\times \bfn = \mathbf{ 0} ~~ \text{on} ~ \partial\omega.
\end{equation}
Here \eqref{lem_divcurl_eq2} is a standard well-posed elliptic interface problem and the well-posedness of \eqref{lem_divcurl_eq3} can be found in Ref.~\refcite{1982Saranen}. 

\RG{To show \eqref{lem_divcurl_eq01}, based on \eqref{lem_divcurl_eq1}, we show the a-priori estimates for both $v$ and $\bfw$ in terms of data. Testing \eqref{lem_divcurl_eq2} by $v$ and using integration by parts we have
\begin{equation}
\label{lem_divcurl_eq4}
\int_{\omega} b^{-1} \nabla v\cdot\nabla v \dd x = -\int_{\omega} h v \dd x + \int_{\partial\omega} g v \dd s.
\end{equation} 
It implies, with Poincar\'e inequality and the trace inequality for $v$, that
\begin{equation}
\begin{split}
\label{lem_divcurl_eq5}
 \| b^{-1/2} \nabla v \|^2_{L^2(\omega)} & \le \| h \|_{L^2(\omega)} \| v \|_{L^2(\omega)} + \| g \|_{L^2(\partial\omega)} \| v \|_{L^2(\partial\omega)} \\
& \lesssim (\| h \|_{L^2(\omega)} + \| g \|_{L^2(\partial\omega)}  ) \| \nabla v \|_{L^2(\omega)}.
\end{split}
\end{equation}
Cancelling one $\| \nabla v \|_{L^2(\omega)}$ in \eqref{lem_divcurl_eq5} yields the estimate: $b_{\max}^{-1} \| \nabla v \|_{L^2(\omega)}\lesssim \| h \|_{L^2(\omega)} + \| g \|_{L^2(\partial\omega)}$. As for $\bfw$, we note that $\bfw\in \bfH(\curl)\cap \bfH(\div)$ with $\div(\bfw) = 0$, and thus we can apply~Corollary 3.51 in Ref. \refcite{2003Monk} to obtain
\begin{equation}
\label{lem_divcurl_eq6}
\| \bfw \|_{L^2(\omega)} \lesssim \| \curl \bfw \|_{L^2(\omega)} + \| \bfw\times \bfn \|_{L^2(\partial\omega)} = \| \curl \bfw \|_{L^2(\omega)}.
\end{equation}
Then, testing \eqref{lem_divcurl_eq3} with $\bfw$, applying the integration by parts and using \eqref{lem_divcurl_eq6}, we have
\begin{equation}
\label{lem_divcurl_eq7}
b_{\min}\| \curl \bfw \|_{L^2(\omega)} \lesssim \| \bff \|_{L^2(\omega)}.
\end{equation}
Combining the estimates above, we have \eqref{lem_divcurl_eq01}, 
and the dependence of the generic constants follows from the constants in the trace and Poincar\'e inequalities used above, see Ref.~\refcite{2018BrennerSung}. %
}

For $g\in H^{1/2}(\partial \omega)$, convex $\omega$, 
and closed smooth $\gamma$ not intersecting the boundary, there certainly holds that $v\in H^2(\cup\omega^{\pm})$ and thus $\nabla v \in \bfH^1(\cup \omega^{\pm})$ \cite{2002HuangZou,2007HuangZou}. As for $\bfw$, following the argument of Theorem 5.2 in Ref. \refcite{2007HuangZou}, we construct $\tilde{\bfw}\in \bfH^1(\omega)$ such that $\curl(\tilde{\bfw}) = \bff$, $\div(\tilde{\bfw}) = 0$ and $\tilde{\bfw}\cdot\bfn = 0$ due to $\div(\bff) = 0$, see Theorem 3.8 in Ref. \refcite{2011GiraultRaviart}. Then, we have $\curl(b\curl\bfw - \tilde{\bfw}) = \mathbf{ 0}$ in $\omega$ and $(b\curl\bfw - \tilde{\bfw})\cdot\bfn = 0$ and $\partial\omega$, and conclude by exact sequence that $b\curl\bfw - \tilde{\bfw} = \nabla \phi$ for some $\phi\in H^1(\omega)$ such that
\begin{equation*}
\label{lem_divcurl_eq3_1}
\left\{
\begin{aligned}
    \div(b^{-1}\nabla \phi) =  -\div(b^{-1} \tilde{\bfw}) & ~~\text{in} ~ \omega^{\pm}, 
    \\
    [\phi]_{\gamma} = 0 & ~~\text{on} ~ \gamma, 
    \\
    [b^{-1}\nabla \phi\cdot\bfn]_{\gamma} = -[b^{-1}\tilde{\bfw}\cdot\bfn]_{\gamma} & ~~\text{on} ~ \gamma, 
    \\
    \nabla\phi \cdot\bfn = 0, & ~~\text{on} ~\partial \omega.
\end{aligned}
\right.
\end{equation*}
Note that this is an interface problem with the non-homogeneous flux jump condition. As $\div(b^{-1} \tilde{\bfw}|_{\omega^{\pm}})\in L^2(\omega^{\pm})$ and $\tilde{\bfw}\cdot\bfn|_{\gamma} \in H^{1/2}(\gamma)$, further by the assumption that $\gamma$ does not intersect $\partial\omega$, we have $\phi\in H^2(\cup\omega^{\pm})$ and thus $\nabla\phi\in \bfH^1(\cup\omega^{\pm})$ by Ref. \refcite{1998ChenZou}. Therefore, we conclude $\curl \bfw \in \bfH^1(\cup\omega^{\pm})$, and $ \bfvarphi \in \bfH^1(\cup\omega^{\pm})$ by the decomposition \eqref{lem_divcurl_eq1}.

\end{proof}

\begin{lemma}
\label{lem_divcurl2}
Given a simple-connected domain $\omega$ with Lipschitz boundary, 
let an interface $\gamma$ separates $\omega$ into $\omega^{\pm}$ and define a piecewise constant function $a=a^{\pm}>0$ in $\omega^{\pm}$. 
For the data functions $\bff\in \bfH(\div;\omega)\cap\ker(\div)$, $h\in L^2(\omega)$ and $\bfg\in \bfH^{-1/2}(\partial\omega)$ such that the compatibility condition holds: 
\begin{equation}
\label{lem_divcurl2_com_eq}
\langle \bff \cdot \bfn, v\rangle_{\partial \omega}  = 
\langle \rot_{\partial \omega} (\bfn\times \bfg), v \rangle_{\partial \omega} 
\quad \forall v\in H^1(\omega),
\end{equation}
then the following problem admits a solution $\bfvarphi\in\bfH(\curl;\omega)$ and $a\bfvarphi\in\bfH(\div;\omega)$ 
\begin{equation}
\label{lem_divcurl2_eq0}
\curl\, ( \bfvarphi) = \bff, ~~~ \div( a \bfvarphi) = h ~~ \text{in} ~\omega, ~~~ \bfvarphi \times \bfn = \bfg, ~~ \text{on} ~ \partial\omega.
\end{equation}
\RG{Furthermore, if $\bfg \in \mathbf{ L}^2(\partial\omega)$, the following a-priori estimate holds:
\begin{equation}
\label{lem_divcurl2_eq01}
a_{\min}(1+a_{\max})^{-1} \| \bfvarphi \|_{L^2(\omega)} \lesssim \| \bff \|_{L^2(\omega)} +  \| h \|_{L^2(\omega)} +  \| \bfg \|_{L^2(\partial\omega)},
\end{equation}
where $a_{\min} = \min\{a^-,a^+\}$, $a_{\max} = \max\{a^-,a^+\}$ and the constant $C_{\omega}$ only depends the geometry of $\omega$. If $\omega$ is star-convex, then $C_\omega= C(h_{\omega}/\rho_{\omega})$.}
\end{lemma}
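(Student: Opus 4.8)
The plan is to follow the proof of Lemma~\ref{lem_divcurl} almost verbatim, since \eqref{lem_divcurl2_eq0} is the \emph{dual} of \eqref{lem_divcurl_eq0}: the roles of $\curl$ and $\div$, of normal and tangential traces, and of where the coefficient sits (inside $\curl$ versus inside $\div$) are interchanged. Concretely, I would look for $\bfvarphi$ in a Helmholtz-type form $\bfvarphi = \bfw + \nabla v$ tailored to the tangential datum, with $\bfw$ carrying the curl equation and the tangential trace and $v$ a scalar potential carrying the divergence equation.

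First I would let $\bfw\in\bfH(\curl;\omega)$ solve the \emph{constant-coefficient} first-order system $\curl\bfw = \bff$, $\div\bfw = 0$ in $\omega$, and $\bfw\times\bfn = \bfg$ on $\partial\omega$. On the simply-connected Lipschitz domain $\omega$ this is classically solvable: the necessary conditions are $\bff\in\ker(\div)$ (given) and $\bff\cdot\bfn = \rot_{\partial\omega}(\bfn\times\bfg)$ on $\partial\omega$, which by \eqref{rotcurl} together with the fact that $\bfw\times\bfn=\bfg$ forces the tangential trace $\bfw^{\tau}=\bfn\times\bfg$ amounts to $\bff\cdot\bfn=\curl\bfw\cdot\bfn$, i.e.\ exactly the weak compatibility condition~\eqref{lem_divcurl2_com_eq}; see Ref.~\refcite{1998AmroucheBernardiDaugeGirault,2011GiraultRaviart,1982Saranen}. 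Since $\bfw\in\bfH(\div;\omega)$ with $\div\bfw=0$, its normal trace is continuous across $\gamma$, so $[a\bfw\cdot\bfn]_{\gamma}=(a^{+}-a^{-})\,\bfw\cdot\bfn|_{\gamma}\in H^{-1/2}(\gamma)$; I would then let $v\in H_0^1(\omega)$ be the weak solution of the elliptic interface problem $\div(a\nabla v)=h$ in $\omega^{\pm}$, $[v]_{\gamma}=0$, $[a\nabla v\cdot\bfn]_{\gamma}=-[a\bfw\cdot\bfn]_{\gamma}$, $v=0$ on $\partial\omega$, a standard well-posed Dirichlet interface problem, cf.\ Ref.~\refcite{1998ChenZou}. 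Now $\bfvarphi:=\bfw+\nabla v$ satisfies $\curl\bfvarphi=\curl\bfw=\bff$; $\div(a\bfvarphi)=\div(a\nabla v)+\div(a\bfw)=h$, because the piecewise part of $\div(a\nabla v)$ equals $h$ while the surface contributions on $\gamma$ coming from $\div(a\nabla v)$ and from $\div(a\bfw)$ cancel by the chosen flux-jump condition; and $\bfvarphi\times\bfn=\bfw\times\bfn+\nabla v\times\bfn=\bfg$ since $v$ vanishes on $\partial\omega$. As $\bff,h\in L^2(\omega)$, this gives $\bfvarphi\in\bfH(\curl;\omega)$ and $a\bfvarphi\in\bfH(\div;\omega)$; uniqueness (not asserted in the statement) would follow from the triviality of the kernel of the system on the simply-connected $\omega$.

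For the estimate~\eqref{lem_divcurl2_eq01} I would bound the two pieces separately. Because the $\bfw$-problem has constant coefficients, a Friedrichs-type inequality for $\div$-free fields (Corollary~3.51 in Ref.~\refcite{2003Monk}) gives $\norm{\bfw}_{L^2(\omega)}\lesssim \norm{\curl\bfw}_{L^2(\omega)}+\norm{\bfw\times\bfn}_{L^2(\partial\omega)}=\norm{\bff}_{L^2(\omega)}+\norm{\bfg}_{L^2(\partial\omega)}$, with a constant depending only on the geometry of $\omega$, hence independent of $a$ and of the interface. Testing the weak form of the $v$-problem against $v$ (integration by parts on $\omega^{\pm}$) yields $\int_{\omega}a\abs{\nabla v}^2=-\int_{\omega}h v-\langle [a\bfw\cdot\bfn]_{\gamma},v\rangle_{\gamma}$; bounding the interface term by $\norm{[a\bfw\cdot\bfn]_{\gamma}}_{H^{-1/2}(\gamma)}\norm{v}_{H^{1/2}(\gamma)}\lesssim a_{\max}\norm{\bfw}_{L^2(\omega)}\norm{v}_{H^1(\omega)}$ and invoking the Poincar\'e and trace inequalities on $\omega$ gives $a_{\min}\norm{\nabla v}_{L^2(\omega)}\lesssim \norm{h}_{L^2(\omega)}+a_{\max}\bigl(\norm{\bff}_{L^2(\omega)}+\norm{\bfg}_{L^2(\partial\omega)}\bigr)$. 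Adding the two bounds for $\bfvarphi=\bfw+\nabla v$ and rearranging reproduces the weight $a_{\min}(1+a_{\max})^{-1}$ on the left-hand side, exactly as $b_{\min}(1+b_{\max})^{-1}$ arises in~\eqref{lem_divcurl_eq01}; the star-convex refinement $C_{\omega}=C(h_{\omega}/\rho_{\omega})$ follows from the corresponding dependence of the Poincar\'e, trace, and Friedrichs constants.

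The hard part, as with Lemma~\ref{lem_divcurl}, is not solvability but making every constant \emph{uniform in the position of the interface $\gamma$ inside $\omega$}: one must check that the constants controlling the flux-jump datum on $\gamma$ — namely the $\bfH(\div;\omega^{-})$--$H^{-1/2}(\gamma)$ and $H^{1}(\omega^{-})$--$H^{1/2}(\gamma)$ trace constants — stay bounded under the standing smoothness/geometry hypotheses on $\gamma$, which is precisely the point stressed in the paragraph preceding Lemma~\ref{lem_divcurl}. (An alternative is the splitting with $\div(a\bfw)=0$, which removes the flux-jump term from the $v$-problem but then makes $\bfw$ feel the coefficient $a$, transferring the difficulty to the $a$-dependence of an $a$-weighted Friedrichs inequality; in either route the genuinely non-trivial work is the coefficient- and interface-uniform control of the $\div$-$\curl$ solution operator.)
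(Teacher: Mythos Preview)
Your existence argument is correct and takes a genuinely different route from the paper. The paper invokes Saranen's decomposition (Theorem~1.2 in Ref.~\refcite{1983Saranen}) to write $\bfvarphi=\nabla v+a^{-1}\curl\bfw$, where $v\in H_0^1(\omega)$ solves the \emph{homogeneous}-jump interface problem $\div(a\nabla v)=h$, and $\bfw$ solves $\curl(a^{-1}\curl\bfw)=\bff$, $\div\bfw=0$, $a^{-1}\curl\bfw\times\bfn=\bfg$, $\bfw\cdot\bfn=0$. Both subproblems then have energy identities involving only $\omega$-integrals and $\partial\omega$-traces, so the constants depend only on the geometry of $\omega$ and never on $\gamma$; this is the whole point of Remark~\ref{rem_apriori_est}. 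Your decomposition $\bfvarphi=\bfw+\nabla v$ with a constant-coefficient $\bfw$ is equally valid for existence and arguably more elementary, but it shifts the coefficient into the $v$-problem as a non-homogeneous flux jump on $\gamma$.

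The one soft spot is exactly where you flag it. Bounding $\langle[a\bfw\cdot\bfn]_\gamma,v\rangle_\gamma$ via the $H^{-1/2}(\gamma)$--$H^{1/2}(\gamma)$ duality and trace theorems on $\gamma$ does pick up $\gamma$-dependent constants, which defeats the purpose of the lemma. Fortunately your route can be salvaged without those traces: since $\div\bfw=0$ and $v|_{\partial\omega}=0$, integrating $\bfw\cdot\nabla v$ by parts over $\omega^{-}$ gives $\langle\bfw\cdot\bfn,v\rangle_\gamma=\int_{\omega^{-}}\bfw\cdot\nabla v$, whence $|\langle[a\bfw\cdot\bfn]_\gamma,v\rangle_\gamma|\le |a^{+}-a^{-}|\,\|\bfw\|_{L^2(\omega)}\|\nabla v\|_{L^2(\omega)}$ with no constant depending on $\gamma$. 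With this fix your estimate closes and reproduces \eqref{lem_divcurl2_eq01}. What the paper's decomposition buys is that this step never arises: the $v$-equation has no interface source, and the $\bfw$-equation is handled by testing against $\bfw$ and the Friedrichs-type bound $\|\bfw\|_{L^2(\omega)}\lesssim\|\curl\bfw\|_{L^2(\omega)}$ (using $\div\bfw=0$, $\bfw\cdot\bfn=0$, and Costabel's trace result in Ref.~\refcite{1990Costabel}), again with constants depending only on $\omega$.
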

\RG{
\begin{proof}
As $\omega$ is assumed to be simply-connected, by Theorem 1.2 in~Ref. \refcite{1983Saranen}, 
the solution $\bfvarphi$ to \eqref{lem_divcurl2_eq0} has the following decomposition:
\begin{equation}
\label{lem_divcurl2_eq1}
\bfvarphi = \nabla v + a^{-1} \curl (\bfw),
\end{equation}
where $v$ is the solution of
\begin{equation}
\label{lem_divcurl2_eq2}
\div(a \nabla v) = h ~~ \text{in} ~ \omega, ~~~ v =0 ~~ \text{on} ~ \partial\omega,
\end{equation}
and $\bfw$ is the solution of 
\begin{equation}
\begin{split}
\label{lem_divcurl2_eq3}
& \curl (a^{-1} \curl \bfw) = \bff, ~~~ \div(\bfw) = 0 ~~~~~ \text{in} ~ \omega,  \\
& a^{-1} \curl(\bfw) \times\bfn = \bfg, ~~~ \bfw\cdot\bfn = 0, ~~~~~~ \text{on} ~ \partial \omega.
\end{split}
\end{equation}
By the similar argument to Lemma \ref{lem_divcurl} with the last remark in Ref. \refcite{1990Costabel}, we have
\begin{equation}
\label{lem_divcurl2_eq5} 
\begin{split}
&a_{\min}\| \nabla v \|_{L^2(\omega)} \lesssim \| h \|_{L^2(\omega)} \\
&a^{-1}_{\max} \| \curl (\bfw) \|_{L^2(\omega)} \lesssim \| \bff \|_{L^2(\omega)} + \| \bfg \|_{L^2(\partial\omega)},
\end{split}
\end{equation}
which leads to the desired estimate by \eqref{lem_divcurl2_eq1}.
\end{proof}
\begin{remark}
\label{rem_apriori_est}
The key of the {\it a priori} estimates of \eqref{lem_divcurl_eq01} and \eqref{lem_divcurl2_eq01} is the independence with respect to the interface location. 
The result of Corollary 3.51 in Ref.~\refcite{2003Monk} employs a compactness argument for $\curl$-$\div$ systems on homogeneous media. 
One may indeed use this technique to obtain the similar estimates for interface problems, which, however, 
may lead to constants depending on the interface location. %
\end{remark}}

\RG{
\begin{lemma}
    \label{lem_curlcurl}
    Given a simple-connected domain $\omega$ with Lipschitz boundary, 
    let an interface $\gamma$ separates $\omega$ into $\omega^{\pm}$ and define two
    piecewise constant functions $a=a^{\pm}>0$ and $b=b^{\pm}>0$ in $\omega^{\pm}$. 
    For the data functions $\bff\in \bfH(\div;\omega)\cap\ker(\div)$ and 
    $\bfg\in \bfH(\rot;\partial\omega)$ 
    such that 
    \begin{equation}
       \label{lem_curlcurl_eq01}
    \int_{\partial\omega} \rot_{\partial\omega} \bfg \dd s = 0.
    \end{equation}
    Then, the equation
    \begin{equation}
        \label{lem_curlcurl_eq02}
        \curl (a \curl \bfvarphi) = \bff, ~~~ \div(b \bfvarphi) = 0, ~~~ \bfvarphi^{\tau} = \bfg,
    \end{equation}
    admits a unique solution $\bfvarphi$ satisfying $\bfvarphi\in \bfH(\curl;\omega)$, $b\bfvarphi\in\bfH(\div;\omega)$,
    $a\curl\bfvarphi\in \bfH(\curl;\omega)$.
\end{lemma}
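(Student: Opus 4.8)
The plan is to split the second-order system \eqref{lem_curlcurl_eq02} into two first-order $\div$-$\curl$ systems of the kind already settled in Lemmas \ref{lem_divcurl} and \ref{lem_divcurl2}, chained along the de Rham complex. First I would introduce the auxiliary field $\bfw := \curl\bfvarphi$. Then $\div\bfw = 0$ automatically, and by the surface identity \eqref{rotcurl} the tangential condition $\bfvarphi^{\tau} = \bfg$ forces $\bfw\cdot\bfn = \curl\bfvarphi\cdot\bfn = \rot_{\partial\omega}\bfvarphi^{\tau} = \rot_{\partial\omega}\bfg$ on $\partial\omega$. Thus $\bfw$ should solve $\curl(a\bfw) = \bff$, $\div\bfw = 0$ in $\omega$, $\bfw\cdot\bfn = \rot_{\partial\omega}\bfg$ on $\partial\omega$, which is exactly problem \eqref{lem_divcurl_eq0} with $b$ replaced by $a$, with $h = 0$, and with Neumann datum $g = \rot_{\partial\omega}\bfg$; the latter lies in $L^2(\partial\omega)\subset H^{-1/2}(\partial\omega)$ because $\bfg\in\bfH(\rot;\partial\omega)$, and the compatibility condition \eqref{lem_divcurl_com_eq} becomes $0 = \int_{\partial\omega}\rot_{\partial\omega}\bfg\dd s$, that is, hypothesis \eqref{lem_curlcurl_eq01}. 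Hence Lemma \ref{lem_divcurl} provides a unique $\bfw$ with $\bfw\in\bfH(\div;\omega)$, $\div\bfw = 0$, and $a\bfw\in\bfH(\curl;\omega)$.

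Next I would recover $\bfvarphi$ from $\bfw$ by solving $\curl\bfvarphi = \bfw$, $\div(b\bfvarphi) = 0$ in $\omega$, $\bfvarphi\times\bfn = \bfg\times\bfn$ on $\partial\omega$. This is problem \eqref{lem_divcurl2_eq0} with $a$ replaced by $b$, with right-hand side $\bfw\in\bfH(\div;\omega)\cap\ker(\div)$ supplied by the first step, with $h = 0$, and with tangential datum $\bfg\times\bfn$. Its compatibility condition \eqref{lem_divcurl2_com_eq} requires $\langle\bfw\cdot\bfn, v\rangle_{\partial\omega} = \langle\rot_{\partial\omega}(\bfn\times(\bfg\times\bfn)), v\rangle_{\partial\omega}$ for all $v\in H^1(\omega)$; since $\bfn\times(\bfg\times\bfn) = \bfg$ for the tangential field $\bfg$, the right side equals $\langle\rot_{\partial\omega}\bfg, v\rangle_{\partial\omega}$, which matches the left side because $\bfw\cdot\bfn = \rot_{\partial\omega}\bfg$ was built into the first step. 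Lemma \ref{lem_divcurl2} then yields $\bfvarphi$ with $\bfvarphi\in\bfH(\curl;\omega)$ and $b\bfvarphi\in\bfH(\div;\omega)$, and $\bfvarphi\times\bfn = \bfg\times\bfn$ is equivalent to $\bfvarphi^{\tau} = \bfg$. Combining the two steps, $\curl(a\curl\bfvarphi) = \curl(a\bfw) = \bff$, $\div(b\bfvarphi) = 0$, $\bfvarphi^{\tau} = \bfg$, and the three membership claims follow, with $a\curl\bfvarphi = a\bfw\in\bfH(\curl;\omega)$ coming from the first step.

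For uniqueness I would take $\bff = \mathbf{0}$ and $\bfg = \mathbf{0}$. The first step, using uniqueness in Lemma \ref{lem_divcurl}, gives $\bfw = \curl\bfvarphi = \mathbf{0}$; since $\omega$ is simply connected the exact sequence yields $\bfvarphi = \nabla p$ for some $p\in H^1(\omega)$, the condition $\bfvarphi^{\tau} = \mathbf{0}$ makes $p$ constant on $\partial\omega$ (normalized to $0$), and testing $\div(b\nabla p) = 0$ with $p$ gives $\int_\omega b|\nabla p|^2\dd x = 0$, hence $\bfvarphi = \mathbf{0}$.

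The reduction itself is essentially the whole content; the remaining steps are routine, and the only points requiring genuine care are the boundary-trace bookkeeping — the identity $\curl\bfvarphi\cdot\bfn = \rot_{\partial\omega}\bfvarphi^{\tau}$ and the conversion between the ``tangential component'' trace $\bfvarphi^{\tau}$ used in \eqref{lem_curlcurl_eq02} and the ``twisted tangential'' trace $\bfvarphi\times\bfn$ appearing in \eqref{lem_divcurl2_eq0} — together with checking that the two compatibility conditions \eqref{lem_divcurl_com_eq} and \eqref{lem_divcurl2_com_eq} are consistently inherited from \eqref{lem_curlcurl_eq01}. I expect this trace bookkeeping, rather than any new estimate, to be the main (minor) obstacle; should a quantitative bound be needed later, chaining the a priori estimates \eqref{lem_divcurl_eq01} and \eqref{lem_divcurl2_eq01} would control $\|\bfvarphi\|_{L^2(\omega)}$ in terms of $\bff$ and $\bfg$ with constants independent of the interface location.
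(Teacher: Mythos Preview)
Your proposal is correct and follows essentially the same route as the paper: introduce the auxiliary field $\bfpsi=\curl\bfvarphi$ (your $\bfw$), solve for it via Lemma~\ref{lem_divcurl} using \eqref{lem_curlcurl_eq01} as the compatibility condition, then recover $\bfvarphi$ via Lemma~\ref{lem_divcurl2} after checking \eqref{lem_divcurl2_com_eq} via $\bfn\times(\bfg\times\bfn)=\bfg$. Your explicit uniqueness argument is a welcome addition, though it is already implicit in the uniqueness of the two sub-problems once one notes that any solution of \eqref{lem_curlcurl_eq02} must have $\curl\bfvarphi$ solving the first system.
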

\begin{proof}
    We first consider a potential $\bfpsi$ such that 
    \begin{equation}
    \label{lem_curlcurl_eq1}
    \curl (a \bfpsi) = \bff, ~~ \div(\bfpsi) = 0, ~~ \text{in} ~ \omega, 
    ~~~~ \bfpsi\cdot\bfn = \rot_{\partial \omega} \bfg,  ~~ \text{on} ~ \partial \omega.
    \end{equation}
    The condition \eqref{lem_curlcurl_eq01} together with Lemma \ref{lem_divcurl} shows the unique existence of $\bfpsi$. Then, it is easy to see that \eqref{lem_curlcurl_eq02} can be equivalently written as
    \begin{equation}
    \label{lem_curlcurl_eq2}
    \curl (\bfvarphi) = \bfpsi, ~~ \div(b \bfvarphi) = 0 ~~ \text{in} ~ \omega, ~~~ \bfvarphi^{\tau}= \bfg ~~~~ \text{on} ~ \partial \omega.
    \end{equation}
    Note that the boundary condition in \eqref{lem_curlcurl_eq2} is equivalent to 
    $\bfvarphi\times \bfn = \bfg\times\bfn$. 
    As $\bfn\times(\bfg \times \bfn) = \bfg$, with integration by parts, 
    we have for any $v\in H^1(\omega)$,
    \begin{equation}
    \label{lem_curlcurl_eq3}
    \int_{\partial \omega} (\bfn\times(\bfg \times \bfn))(\nabla v \times \bfn) \dd s = \int_{\partial \omega} \bfg(\nabla v \times \bfn) \dd s = \int_{\partial \omega}  (\rot_{\partial \omega} \bfg) v \dd s.
    \end{equation}
    Then, the boundary condition in \eqref{lem_curlcurl_eq1} shows that the compatibility condition in \eqref{lem_divcurl2_com_eq} indeed holds. 
    Thus, the well-posedness follows from Lemma \ref{lem_divcurl2}.
\end{proof}
}
At last, we present the complex formed by the new globally-defined spaces of~\eqref{tildeHspace}.

\begin{lemma}
\label{lem_IVE_deRham_H_complex}
The following sequence is a complex:
\begin{equation}\label{IVE_deRham_H_complex}
\adjustbox{scale=0.95,center}{%
\begin{tikzcd}[column sep=scriptsize]
  \mathbb{R} \arrow[r,"\hookrightarrow"] 
   & H^2(\beta;\mathcal{T}_h) \arrow[r, "\grad"] 
   & \bfH^1(\curl,\alpha,\beta;\mathcal{T}_h) \arrow[r, "\curl"] 
   & \bfH^1(\div,\alpha;\mathcal{T}_h)  \arrow[r, "\div"] 
   & H^1(\mathcal{T}_h)  \arrow[r]
   & 0.
\end{tikzcd}
}
\end{equation}
When $\Omega$ is a convex polyhedron, 
and the interface is also sufficiently smooth not intersecting $\partial \Omega$, it is also exact.
\end{lemma}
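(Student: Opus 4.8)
The plan is to prove the two assertions separately. That the sequence \eqref{IVE_deRham_H_complex} is a complex needs no extra hypotheses and follows by unwinding the definitions \eqref{tildeHspace}: since $\curl\grad=0$ and $\div\curl=0$ distributionally, it suffices to check that each of $\grad$, $\curl$, $\div$ maps its domain space into the next. If $u\in H^2(\beta;\mathcal{T}_h)$, then $\nabla u\in\bfH(\curl;\Omega)$ (as $u\in H^1(\Omega)$ and $\curl\nabla u=0$), and on each $K$ one has $\nabla u\in\bfH^1(\curl;\cup K^{\pm})$ (since $u\in H^2(\cup K^{\pm})$), $\beta\nabla u\in\bfH(\div;K)$ by definition, and $\alpha\curl\nabla u=0\in\bfH(\curl;K)$; hence $\nabla u\in\bfH^1(\curl,\alpha,\beta;\mathcal{T}_h)$. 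If $\bfu\in\bfH^1(\curl,\alpha,\beta;\mathcal{T}_h)$, then $\curl\bfu\in\bfH(\div;\Omega)$ with $\div\curl\bfu=0$, while on each $K$, $\curl\bfu\in\bfH^1(\div;\cup K^{\pm})$ (since $\bfu\in\bfH^1(\curl;\cup K^{\pm})$), $\alpha\curl\bfu\in\bfH(\curl;K)$ by definition, and $\div\curl\bfu=0\in H^1(K)$; hence $\curl\bfu\in\bfH^1(\div,\alpha;\mathcal{T}_h)$. If $\bfu\in\bfH^1(\div,\alpha;\mathcal{T}_h)$, then $\div\bfu\in L^2(\Omega)$ and $\div\bfu\in H^1(K)$ for each $K$ by definition, so $\div\bfu\in H^1(\mathcal{T}_h)$. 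Together with $\mathbb{R}\hookrightarrow H^2(\beta;\mathcal{T}_h)$ and $\curl\grad=0$, $\div\curl=0$, this closes the complex.

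For exactness (now invoking convexity of $\Omega$ and the smoothness and non-intersection of $\Gamma$, entering only through the elliptic-interface regularity in Lemmas~\ref{lem_divcurl}--\ref{lem_divcurl2}), I would treat the four slots in turn. At $H^2(\beta;\mathcal{T}_h)$: $\nabla u=0$ with $u\in H^1(\Omega)$ and $\Omega$ connected forces $u$ constant. At $\bfH^1(\curl,\alpha,\beta;\mathcal{T}_h)$: if $\curl\bfu=0$ for $\bfu$ in this space, simple-connectedness of $\Omega$ furnishes $u\in H^1(\Omega)$ with $\nabla u=\bfu$; then $\nabla u=\bfu\in\bfH^1(\cup K^{\pm})$ gives $u\in H^2(\cup K^{\pm})$ and $\beta\nabla u=\beta\bfu\in\bfH(\div;K)$ is inherited, so $u\in H^2(\beta;\mathcal{T}_h)$. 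At $H^1(\mathcal{T}_h)$ (surjectivity of $\div$): for $q\in H^1(\mathcal{T}_h)$, apply Lemma~\ref{lem_divcurl} with $b=\alpha$, $\bff=0$, $h=q$, and $g$ the constant $\big(\int_{\Omega}q\,\dd x\big)/|\partial\Omega|$ so that \eqref{lem_divcurl_com_eq} holds; the solution $\bfu:=\bfvarphi$ obeys $\div\bfu=q$, $\curl(\alpha\bfu)=0$, and by the regularity clause of Lemma~\ref{lem_divcurl} it is piecewise $\bfH^1$, hence $\bfu\in\bfH^1(\div,\alpha;\mathcal{T}_h)$ with $\div\bfu=q$.

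The heart of the matter is exactness at $\bfH^1(\div,\alpha;\mathcal{T}_h)$. Given $\bfv$ in this space with $\div\bfv=0$, I would first build a global regular vector potential $\bfz\in\bfH^1(\Omega)$ with $\curl\bfz=\bfv$, $\div\bfz=0$, $\bfz\cdot\bfn=0$, exactly as the auxiliary field $\tilde{\bfw}$ is constructed in the proof of Lemma~\ref{lem_divcurl}. Next, apply Lemma~\ref{lem_divcurl2} on $\omega=\Omega$ with $a=\beta$, $\bff=\bfv$, $h=0$, and $\bfg=\bfz\times\bfn$; the compatibility condition \eqref{lem_divcurl2_com_eq} holds because $\bfn\times\bfg=\bfz-(\bfn\cdot\bfz)\bfn$ and, by integration by parts together with $\div\bfv=0$, $\langle\rot_{\partial\Omega}(\bfn\times\bfg),v\rangle_{\partial\Omega}=\int_{\Omega}\curl\bfz\cdot\nabla v\,\dd x=\int_{\Omega}\bfv\cdot\nabla v\,\dd x=\langle\bfv\cdot\bfn,v\rangle_{\partial\Omega}$ for every $v\in H^1(\Omega)$. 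This yields $\bfu:=\bfvarphi$ with $\curl\bfu=\bfv$, $\beta\bfu\in\bfH(\div;\Omega)$, $\div(\beta\bfu)=0$. The piecewise $\bfH^1(\cup K^{\pm})$-regularity of $\bfu$ — the one place where convexity of $\Omega$ and the smoothness and non-intersection of $\Gamma$ are genuinely needed — follows by transferring the decomposition-plus-elliptic-interface-regularity argument from the end of the proof of Lemma~\ref{lem_divcurl} to the $\curl$-$\div$ system of Lemma~\ref{lem_divcurl2}: with $h=0$ the scalar part in \eqref{lem_divcurl2_eq1} vanishes, $\bfz$ again serves as the auxiliary field, the remaining curl-free field is $\nabla\phi$ for $\phi$ solving an elliptic interface problem with $L^2$ data and an $H^{1/2}$ flux jump, so $\phi\in H^2(\cup\Omega^{\pm})$ and $\bfu\in\bfH^1(\cup\Omega^{\pm})$. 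It then remains only to read off membership: $\bfu\in\bfH^1(\curl;\cup K^{\pm})$ since $\curl\bfu=\bfv\in\bfH^1(\cup K^{\pm})$; $\beta\bfu\in\bfH(\div;K)$ by restriction; and $\alpha\curl\bfu=\alpha\bfv\in\bfH(\curl;K)$ is part of $\bfv\in\bfH^1(\div,\alpha;\mathcal{T}_h)$. Thus $\bfu\in\bfH^1(\curl,\alpha,\beta;\mathcal{T}_h)$ with $\curl\bfu=\bfv$.

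I expect the main obstacle to sit precisely in that last slot: one must produce a single potential $\bfu$ that is at once globally tangentially continuous, piecewise $\bfH^1$, and with $\beta\bfu$ having continuous normal flux across $\Gamma$ inside every element. This is what forces the specific boundary datum $\bfg=\bfz\times\bfn$ — chosen so the compatibility hypothesis of Lemma~\ref{lem_divcurl2} is met — together with the transfer of the piecewise-$\bfH^1$ regularity argument of Lemma~\ref{lem_divcurl}; checking that this regularity argument still goes through with the merely $\bfH^{1/2}$-regular tangential data inherited from $\bfz$ (in particular controlling the normal trace of the curl-free remainder near $\partial\Omega$) is the point demanding the most care. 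Everything else, namely the complex property and exactness at the other three slots, is routine once the definitions \eqref{tildeHspace} and Lemmas~\ref{lem_divcurl}--\ref{lem_divcurl2} are in hand.
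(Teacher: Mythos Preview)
Your argument is correct, and for the complex property and exactness at $\bfH^1(\curl,\alpha,\beta;\mathcal{T}_h)$ it matches the paper essentially verbatim. At the remaining two slots, however, the paper proceeds differently and more economically.

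For surjectivity of $\div$, rather than invoking Lemma~\ref{lem_divcurl} with a constant Neumann datum, the paper simply solves the Dirichlet elliptic interface problem $\div(\alpha^{-1}\nabla\phi)=f$ in $\Omega$, $\phi=0$ on $\partial\Omega$, and sets $\bfw=\alpha^{-1}\nabla\phi$; standard elliptic-interface regularity gives $\phi\in H^2(\cup\Omega^{\pm})$, and the jump conditions for $\bfw$ are read off from those for $\phi$. Your route via Lemma~\ref{lem_divcurl} is equally valid but introduces a compatibility condition that the paper's Dirichlet formulation sidesteps.

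The more substantive difference is at $\bfH^1(\div,\alpha;\mathcal{T}_h)$. You solve for the potential $\bfu$ directly via Lemma~\ref{lem_divcurl2} with a tailored tangential datum $\bfg=\bfz\times\bfn$, which obliges you to construct the auxiliary $\bfz$ first and then \emph{transfer} the piecewise-$\bfH^1$ regularity argument from the proof of Lemma~\ref{lem_divcurl} to the system of Lemma~\ref{lem_divcurl2}. The paper instead changes variable: it seeks $\bfvarphi:=\beta\bfu$, so that the system becomes $\curl(\beta^{-1}\bfvarphi)=\bfu$, $\div\bfvarphi=0$, $\bfvarphi\cdot\bfn=0$, which is precisely the setting of Lemma~\ref{lem_divcurl} with $b=\beta^{-1}$ and homogeneous data $g=0$. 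The $\bfH^1(\cup\Omega^{\pm})$ regularity is then an immediate consequence of the last clause of Lemma~\ref{lem_divcurl}, with no auxiliary field and no separate transfer step required. Your approach works, but the paper's substitution $\bfvarphi=\beta\bfu$ converts a tangential-boundary problem (for which regularity was not stated) into a normal-boundary problem (for which it was), and is worth noting as the cleaner trick.
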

\begin{proof}

We first verify that $\nabla H^2(\beta;\mathcal{T}_h)\subseteq \ker(\curl)\cap \bfH^1(\curl,\alpha,\beta;\mathcal{T}_h)$. 
This is true due to the jump conditions associated with $\nabla H^2(\beta;\mathcal{T}_h)$, and $\curl\nabla H^2(\beta;\mathcal{T}_h) = 0$. 
Similarly, $\curl \bfH^1(\curl,\alpha,\beta;\mathcal{T}_h) \subseteq \ker(\div)\cap \bfH(\div,\alpha;\mathcal{T}_h)$ due to the jump condition associated with $\bfH^1(\curl,\alpha,\beta;\mathcal{T}_h)$, 
and the fact $\div\curl = 0$. 
Finally, it is trivial that $\div \bfH^1(\div,\alpha;\mathcal T_h) \subseteq H^{1}(\mathcal T_h)$. 
These results together finish the proof.

We then verify the exactness. We first show $\nabla H^2(\beta;\mathcal{T}_h)= \ker(\curl)\cap \bfH^1(\curl,\alpha,\beta;\mathcal{T}_h)$. 
Given each $\bfu\in \ker(\curl)\cap \bfH^1(\curl,\alpha,\beta;\mathcal{T}_h)$, by the classic exact sequence, 
there exists $u\in H^1(\Omega)$ such that $\nabla u =\bfu$, and by the jump conditions associated with $\bfH^1(\curl,\alpha,\beta;\mathcal{T}_h)$ we have $u$ also satisfies those of $H^2(\beta;\mathcal{T}_h)$. In addition, on each element $K$, $\nabla u = \bfu \in \bfH^1(\cup K^{\pm})$ implies $u\in H^2(\cup K^{\pm})$. Therefore, $u\in H^2(\beta;\mathcal{T}_h)$.

We then verify $\curl \bfH^1(\curl,\alpha,\beta;\mathcal{T}_h) = \ker(\div)\cap \bfH^1(\div,\alpha;\mathcal{T}_h)$. Given a function $\bfu\in \ker(\div)\cap\bfH^1(\div,\alpha;\mathcal{T}_h)$, we consider a function $\bfvarphi\in \bfH(\div;\Omega)$ satisfying
\begin{equation}
\label{lem_IVE_deRham_H_exact_eq1}
\curl\, (\beta^{-1}  \bfvarphi) = \bfu, ~~~ \div(\bfvarphi) = 0 ~~ \text{in} ~\Omega, ~~~ \bfvarphi\cdot\bfn =  0 ~~ \text{on} ~ \partial \Omega.
\end{equation}
By Lemma~\ref{lem_divcurl} with $\omega=\Omega$, $\gamma=\Gamma$ and $b=\beta^{-1}$, 
we have this system being well-defined with $\beta^{-1}\bfvarphi\in \bfH(\curl;\Omega)$ and $\bfvarphi\in\bfH_0(\div;\Omega)$. 
Using Lemma~\ref{lem_divcurl} again, by the geometric condition of $\Omega$ and $\Gamma$, we also have $\bfvarphi\in \bfH^1(\cup\Omega^{\pm})$. 
Thus, we obtain $\bfv := \beta^{-1}\bfvarphi \in \bfH(\curl;\Omega)\cap\bfH^1(\cup\Omega^{\pm})$ and $\beta \bfv \in \bfH(\div;\Omega)$. 
Furthermore, on each element $K$, $\bfu\in \bfH^1(\div;K^{\pm})$ implies $\bfv\in \bfH^1(\curl;K^{\pm})$. 
In addition, $[\alpha\curl\bfv\times \bfn]_{\Gamma^K}=0$ is trivial by the property of $\bfu$.

We next show $\div \bfH^1(\div,\alpha;\mathcal T_h) = H^{1}(\mathcal T_h)$. Given each $f\in H^{1}(\mathcal T_h)$. We consider a $\phi$ satisfying
\begin{subequations}
\label{lem_IVE_deRham_H_exact_eq2}
\begin{empheq}[left=\empheqlbrace]{align}
    \div(\alpha^{-1} \nabla \phi) = f, & ~~ \text{in} ~\Omega,  \label{lem_IVE_deRham_H_exact_eq2_1} 
    \\
    [\phi]_{\Gamma} =  0, ~~ [\alpha^{-1} \nabla \phi\cdot\bfn]_{\Gamma} = 0,  
    & ~~ \text{on} ~\Gamma\label{lem_IVE_deRham_H_exact_eq2_2} 
    \\
    \phi = 0 & ~~ \text{on} ~ \partial \Omega.  \label{lem_IVE_deRham_H_exact_eq2_3}
\end{empheq}
\end{subequations}
By the elliptic regularity~\cite{1970Babuska}, we have $\phi\in H^2(\cup\Omega^{\pm})$, and let $\bfw = \alpha^{-1}\nabla\phi \in \bfH^1(\cup\Omega^{\pm})$. 
On each element $K$, as $f\in H^1(\cup K^{\pm})$, we have $\bfw\in\bfH^1(\div;K^{\pm})$. 
At last,~\eqref{lem_IVE_deRham_H_exact_eq2_2} leads to $[\alpha\bfw\times\bfn]_{\Gamma^K}=\mathbf{ 0}$ and $[\bfw\cdot\bfn]_{\Gamma^K}=0$. %
\end{proof}

\begin{remark}
\label{rem_diff_complex}
The classic de Rham complex with higher smoothness is given by Ref.~\refcite{2011GiraultRaviart,2005TaiWinther}:
\begin{equation}
\label{diff_complex_eq1}
\begin{tikzcd}
\mathbb{R} \arrow[r,"\hookrightarrow"] 
& H^2(\Omega) \arrow[r, "\grad"] 
& \bfH^1(\curl;\Omega)  \arrow[r, "\curl"] 
& \bfH^1(\Omega)   \arrow[r, "\div"] 
& L^2(\Omega) \arrow[r] & 0 .
\end{tikzcd}
\end{equation}
We note that this sequence can be simply revised to be
\begin{equation}
\label{diff_complex_eq2}
\begin{tikzcd}
\mathbb{R} \arrow[r,"\hookrightarrow"] 
& H^2(\Omega) \arrow[r, "\grad"] 
& \bfH^1(\curl;\Omega)  \arrow[r, "\curl"] 
& \bfH^1(\div;\Omega)  \arrow[r, "\div"] 
& H^1(\Omega)   \arrow[r] & 0 .
\end{tikzcd}
\end{equation}
The revision can be understood immediately from 
$\ker(\div)\cap \bfH^1(\Omega) = \ker(\div)\cap \bfH^1(\div;\Omega)$.
The proposed new sequence \eqref{IVE_deRham_H_complex} is a further generalization of~\eqref{diff_complex_eq2} in which the jump information is incorporated. Finite element counterparts of \eqref{diff_complex_eq1} and \eqref{diff_complex_eq2} can be found in Ref. \refcite{ChenHuang2022Finite}. Virtual element discretization of \eqref{IVE_deRham_H_complex} will be discussed in Section \ref{sec:discretedeRham}.
\end{remark}

\section{Immersed Finite Element Spaces}
\label{sec:IFE}

In this section, we present $H^1$, $\bfH(\curl)$ and $\bfH(\div)$ IFE functions. 
The basis functions are some piecewise polynomials satisfying the jump conditions in certain sense 
to ensure the local approximation property. 
Particularly, the $H^1$ IFE space has been developed in Ref.~\refcite{2005KafafyLinLinWang}, 
but this is the first time that $\bfH(\curl)$ and $\bfH(\div)$ IFE spaces are systematically 
developed. Different from all the IFE spaces in literature, 
the spaces constructed here serve the purpose for approximation under the VEM framework, 
thus are not limited by the constraint that DoFs need to be imposed on their associated 
geometric objects. Instead, the DoFs are handled by the IVE spaces discussed 
in Section \ref{sec:IVE}. To facilitate a simple presentation, 
we shall focus on the case that elements are only cut by the interface once, 
i.e., each edge has at most one cutting point, 
which is a reasonable assumption employed by many works in the literature~\cite{2020GuoLin,2020GuoZhang,2005KafafyLinLinWang}. 
In fact, the interface elements may generally satisfy this assumption if the background mesh is sufficiently fine, 
namely the interface is locally flat enough. 
\LC{We also remark that IFE spaces can be constructed for more complicated interface element geometry violating this assumption,} 
which we leave to Appendix \ref{sec:appen} for this general case.

We need a linear approximation to the interface portion $\Gamma^K$, denoted by $\Gamma^K_h$.
For example, in~Ref. \refcite{2020GuoLin} $\Gamma^{K}_h$ is constructed as a plane passing through 
the three cutting points forming a triangle satisfying the maximum angle condition, 
see the middle plot in Figure \ref{fig:face_mesh} for an illustration. 
The following lemma essentially acknowledges this setting. 
Another widely-used linear approximation approach is to use 
$\Gamma_h:\phi_h(\bfx)=0$ with $\phi_h$ being the linearization of the sign-distance function 
$\phi$ of $\Gamma$ on the same mesh. These choices indicate that the interface can be well-resolved 
by a mesh that is sufficiently fine. %
\begin{lemma}
\label{lemma_interface_flat}
Suppose the mesh is sufficiently fine such that $h<h_0$ for a fixed threshold $h_0>0$, then on each interface element $K\in\mathcal{T}^i_h$, there exist constants $C_{\Gamma}$ independent of the interface location and mesh size $h_K$ such that for every point $X\in\Gamma^K$ with its orthogonal projection $X^{\bot}$ onto $\Gamma^{K}_h$,
\begin{align}
     \| X-X^{\bot} \| \le  C_{\Gamma} h_{K}^2. \label{lemma_interface_flat_eq1}  
\end{align}
\end{lemma}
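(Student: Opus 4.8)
The statement is essentially a quantitative geometric fact about how well a smooth surface is approximated by the secant plane through three cutting points. The plan is to reduce the claim to a standard Taylor-type estimate for a smooth hypersurface, made uniform in the interface location by a compactness/uniformity argument on the curvature.

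\medskip

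First I would set up local coordinates adapted to $\Gamma^K_h$: pick the plane $\Gamma^K_h$ determined by the three cutting points $p_1,p_2,p_3$ on the edges of $K$, and use it as the $xy$-plane, with the third coordinate $z$ measuring signed distance to $\Gamma^K_h$. Since the three $p_i$ lie on $\Gamma^K$ and on the edges of $K$, and the triangle $p_1p_2p_3$ satisfies the maximum angle condition (Assumption \hyperref[asp:A]{A}), the diameter of this triangle is comparable to $h_K$ and it is non-degenerate in a controlled way. Under the hypothesis $h<h_0$, the piece $\Gamma^K$ of the interface lies in a tubular neighborhood of $\Gamma^K_h$ and can be written as a graph $z=\eta(x,y)$ over (a region containing) the triangle $p_1p_2p_3$, with $\eta$ inheriting $C^2$ bounds from the global smoothness of $\Gamma$: $\|\eta\|_{C^2}\le C_\Gamma$, where $C_\Gamma$ depends only on the $C^2$-geometry of $\Gamma$ (through its principal curvatures), not on $K$ or the interface location. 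This is where one uses that $\Gamma$ is ``sufficiently smooth'' in a scale-invariant sense — I would phrase it as: there is $h_0>0$ and $C_\Gamma$ so that for $h<h_0$ every interface patch $\Gamma^K$ is such a graph with the stated bound.

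\medskip

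The second step is the actual estimate. For $X=(x,y,\eta(x,y))\in\Gamma^K$, its orthogonal projection onto $\Gamma^K_h$ is $X^\bot=(x,y,0)$, so $\|X-X^\bot\| = |\eta(x,y)|$. The function $\eta$ vanishes at the three points $\bar p_i$ (the projections of $p_i$, which lie in the plane so $\eta(\bar p_i)=0$). A bivariate function vanishing at three affinely independent points and having bounded second derivatives is $O(\mathrm{diam}^2)$ on the triangle: writing $\ell$ for the (unique) affine function agreeing with $\eta$ at the $\bar p_i$, we have $\ell\equiv 0$, and a second-order Taylor argument (or the standard interpolation error bound for linear interpolation on a triangle satisfying the maximum angle condition — precisely the estimates cited after Assumption A, e.g. Ref.~\refcite{1976BabuskaAziz}) gives $|\eta(x,y)| = |\eta(x,y)-\ell(x,y)| \le C\,\|D^2\eta\|_\infty\, h_K^2 \le C_\Gamma h_K^2$ for all $(x,y)$ in the triangle. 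Since $\Gamma^K$ projects into (a controlled neighborhood of) that triangle, this yields \eqref{lemma_interface_flat_eq1}.

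\medskip

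The main obstacle — and the only genuinely delicate point — is making the constant $C_\Gamma$ \emph{independent of the interface location}. This has two facets: (i) the $C^2$ graph bound on $\eta$ must come purely from a uniform bound on the curvature of $\Gamma$ (so one needs $\Gamma$ compact and $C^2$, and $h_0$ small enough that the implicit function theorem applies uniformly with location-independent constants); and (ii) the maximum angle condition on the cutting-point triangle must be invoked to keep the linear-interpolation constant bounded — without shape regularity, a naive bound would blow up, but the maximum angle condition (Assumption A) is exactly what rescues the second-order estimate, via the Babuška–Aziz-type bound. I would therefore organize the write-up as: (1) fix $h_0$ from uniform curvature bounds; (2) establish the graph representation with $\|\eta\|_{C^2}\le C_\Gamma$; (3) apply the maximum-angle linear interpolation estimate to $\eta$ on the cutting-point triangle; (4) conclude. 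Everything after step (2) is routine; step (2) is where the uniformity is earned and deserves the most care.
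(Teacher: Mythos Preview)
The paper does not prove this lemma: it is stated without proof, preceded by the remark that it ``essentially acknowledges'' the construction of $\Gamma^K_h$ from Ref.~\refcite{2020GuoLin}, and is treated as a known geometric fact about the linear interface approximation. So there is no paper proof to compare against; your plan supplies what the paper omits.

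Your argument is correct and is the standard route to such a bound. One small refinement: the maximum angle condition on the cutting-point triangle is indeed essential, but not quite where you place it. For the pure $L^\infty$ bound $|\eta - \ell| \le C\|D^2\eta\|_\infty h_K^2$ with $\ell\equiv 0$ the linear interpolant of zero data, the constant $C$ is actually shape-independent (one can argue edge-by-edge via Rolle and then fill the interior by one-dimensional interpolation along segments; Babu\v{s}ka--Aziz is needed for \emph{gradient} estimates, not function values). Where the non-degeneracy of the triangle genuinely enters is your step~(2): if the three cutting points were nearly collinear, the normal $\bar\bfn_K$ of $\Gamma^K_h$ could be far from the surface normal of $\Gamma$, and then $\Gamma^K$ would fail to be a graph over $\Gamma^K_h$ with uniformly bounded $\|\eta\|_{C^2}$. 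Concretely, from $\phi(p_i)=0$ one gets $\nabla\phi(p_1)\cdot(p_j-p_1)=O(h_K^2)$ for $j=2,3$; only if $p_2-p_1$ and $p_3-p_1$ span a well-conditioned basis of the plane (i.e., the maximum angle condition) does this force $\bar\bfn_K$ to be $O(h_K)$-close to $\nabla\phi(p_1)/|\nabla\phi(p_1)|$, which is what makes the graph representation uniform. So your instinct to invoke the condition is right; just relocate it from step~(3) to step~(2).
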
 
\RG{As the Maxwell equations generally have low regularity near the interface, we only consider the lowest 
order methods, and thus the $O(h^2)$ approximation geometrical accuracy in \eqref{lemma_interface_flat_eq1} is sufficient. 
If high-order methods are desired, one needs to either resolve the interface exactly by 
using the blending element techniques~\cite{1971Gordon} or approximate the interface by polynomials of
order at least $2p-1$~\cite{2010LiMelenkWohlmuthZou}. 
It is also worthwhile to mention a recent work of VEM \cite{2021DassiFumagalliLosapio} for 2D elements with curved edges.}

\RG{Let $\Gamma^K_h$ partition $K$ into $K^{\pm}_h$, and let $\alpha_h$ and $\beta_h$ be the piecewise constant functions 
whose jumps are now across $\Gamma^K_h$ instead of $\Gamma^K$; namely
\begin{equation}
  \label{discont_coef_alpbeta_h}
  \alpha_h=
  \begin{cases}
     \alpha^-   & \text{in} ~ K^-_h, \\
     \alpha^+   & \text{in} ~ K^+_h,
  \end{cases}
  ~~~~~~~~
  \beta_h=
  \begin{cases}
     \beta^-   & \text{in} ~ K^-_h, \\
     \beta^+   & \text{in} ~ K^+_h.
  \end{cases}
\end{equation}}
But, here we shall postpone the specific parameters $\alpha_h$ and $\beta_h$ in the PDEs to a later discussion, 
and focus on a generic piecewise constant function denoted as $c_h$ to present the IFE functions. 
In the following discussion, $\mathcal{P}_k(K)$ denotes the polynomial space with degree $k$ on $K$.
Let $\bar{\bfn}_K$ be the normal vector to $\Gamma^{K}_h$ that is approximately in the same direction with $\bfn_K$ to $\Gamma^K $. 
Define two piecewise constant vector spaces:
\begin{subequations}
\label{AB_space1}
\begin{align}
    &   \bfP^e_0(c_h;K) = \{ \bfc: \bfc^{\pm}= \bfc|_{K^{\pm}_h} \in [\mathcal{P}_0(K^{\pm}_h)]^3, ~ \bfc\in\bfH(\curl;K) , ~ c_h\bfc\in \bfH(\div;K) \},\\
    &  \bfP^f_0(c_h;K) = \{ \bfc: \bfc^{\pm}= \bfc|_{K^{\pm}_h} \in [\mathcal{P}_0(K^{\pm}_h)]^3, ~ \bfc\in\bfH(\div;K) , ~ c_h\bfc\in \bfH(\curl;K) \}.
\end{align}
\end{subequations}
The super scripts $e$ and $f$ emphasize that the two spaces are, respectively, in the edge and face spaces 
(1-form and 2-form in the language of differential forms); 
namely 
$$
\bfP^f_0(c_h;K) \subset \bfH^1(\div, c_h; K)\cap \ker(\div) ~~~ \text{and} ~~~ \bfP^e_0(c_h;K)\subset \bfH^1(\curl, c'_h, c_h; K)\cap \ker(\curl),
$$ 
where $c'_h$ may be any arbitrary piecewise constant due to the curl-free property of $\bfP^e_0(c_h;K)$. Hence, for the parameters $\alpha_h$ and $\beta_h$, there particularly hold
$$
\bfP^e_0(\beta_h;K)\subset \bfH^1(\curl, \alpha_h, \beta_h; K)~~~ \text{and} ~~~\bfP^f_0(\alpha_h;K) \subset \bfH^1(\div, \alpha_h; K) ,
$$
which give certain reasonable approximations to these two desired Sobolev spaces. 
We shall see that these two spaces are the fundamental ingredients to construct all the $H^1$, $\bfH(\curl)$ and $\bfH(\div)$ IFE functions, 
as well as to construct and project the IVE spaces.

Furthermore, from the definition, it is not hard to conclude the following relation
\begin{equation}
\label{AB_HodgeStar1}
 \bfP^e_0(c_h;K) \xrightleftarrows[~~~c^{-1}_h~~~]{~~c_h~~}  \bfP^f_0(c^{-1}_h;K).%
\end{equation}
Here the discontinuous coefficient $c_h$ can be viewed as a Hodge star operator. 
This perspective is the key for computing the projection of the proposed IVE spaces, see Section \ref{subsec:comment}.

In order to derive explicit formulas for the functions in the spaces \eqref{AB_space1}, 
we further let $\bar{\bft}^1_K$ and $\bar{\bft}^2_K$ be the two orthonormal tangential unit vectors to $\Gamma^{K}_h$, 
and denote the matrix $T_K=[\bar{\bfn}_K, \bar{\bft}^1_K,\bar{\bft}^2_K]$. Then, we define the matrices: 
\begin{equation}
\label{lem_ABeigen_eq1}
M^{f,c_h}_{K} = T_K 
\begin{bmatrix}
 1 & 0 & 0 \\ 0 & \tilde{c} & 0 \\ 0 & 0 & \tilde{c}  
\end{bmatrix}
(T_K)^{\top} ~~~\text{and} ~~~ 
M^{e,c_h}_{K} = T_K 
\begin{bmatrix}
 \tilde{c} & 0 & 0 \\ 0 & 1 & 0 \\ 0 & 0 & 1
\end{bmatrix}
 (T_K)^{\top},
\end{equation}
where $\tilde{c}=c^+_h/c^-_h$. Clearly, both $M^{f,c_h}_{K}$ and $M^{e,c_h}_{K}$ are symmetric and positive definite. Thus, the spaces $\bfP^e_0(c_h;K)$ and $\bfP^f_0(c_h;K)$ can be rewritten as
\begin{subequations}
\label{AB_space2}
\begin{align}
    &   \bfP^e_0(c_h;K) = \left \{  \bfc: \bfc^{\pm}= \bfc|_{K^{\pm}_h}, ~ \bfc^- = M^{e,c_h}_{K} \bfc^+, ~ \bfc^+\in [\mathcal{P}_0(K^{+}_h)]^3 \right \},\\
    &   \bfP^f_0(c_h;K) = \left \{  \bfc: \bfc^{\pm}= \bfc|_{K^{\pm}_h}, ~ \bfc^- = M^{f,c_h}_{K} \bfc^+, ~ \bfc^+\in [\mathcal{P}_0(K^{+}_h)]^3 \right \}.
\end{align}
\end{subequations}
$\bfP^f_0(c_h;K)$ and $\bfP^e_0(c_h;K)$ are subspaces of the piecewise constant vector functions (dimension 6). 
With the jump conditions as the constraints, it can be easily verified that the dimension of both $\bfP^f_0(c_h;K)$ and $\bfP^e_0(c_h;K)$ is $3$.

Now, we proceed to present the $H^1$, $\bfH(\curl)$ and $\bfH(\div)$ IFE functions. 
We consider $\bfP^f_0(a_h;K)$ and $\bfP^e_0(b_h;K)$, formed by two general positive piecewise constant functions $a_h$ and $b_h$. 
Then, all the $H^1$, $\bfH(\curl)$ and $\bfH(\div)$ IFE functions with the general parameters $a_h$ and $b_h$ have simple formulas presented in Table \ref{tab_IFE_func} where $\bfx_K$ is any point on $\Gamma^K$. 
One can directly verify that they belong to the corresponding Sobolev spaces and satisfy the associated jump conditions in the table. 
Note that the normal jump condition in the $\bfH(\curl)$ case and the tangential jump condition in the $\bfH(\div)$ case only hold at the single point $\bfx_K$ instead of the entire $\Gamma^K_h$. 
This does not violate the necessary continuities for these two spaces to be in $\bfH(\curl;K)$ and $\bfH(\div;K)$. 
Of course, different choices of $\bfx_K$ lead to different spaces. 
In addition, compared with the standard Lagrange, N\'ed\'elec, and Raviart-Thomas elements, 
the only difference for their IFE counterparts is to replace the constant vectors in $[\mathcal{P}_0(K)]^3$ by the vectors in $\bfP^f_0(a_h;K)$ and $\bfP^e_0(b_h;K)$, 
thus providing the necessary piecewise constant approximation to $\beta \nabla u$, $\alpha\curl \bfu$, and $\beta\bfu$ on interface elements, respectively.

\begin{table}[htp]
  \centering 
	\renewcommand{\arraystretch}{2}  
  \begin{tabular}{ c c c c } 
  \toprule
 IFE spaces & $S^n_h(b_h;K)$ & $\bfS^e_h(a_h,b_h;K)$ & $\bfS^f_h(a_h;K)$  \medskip  \\ 
Dimension & $4$ & $6$ & $4$
\medskip \\
\makecell{ Sobolev \\ spaces} & $H^1(K)$ & $\bfH(\curl;K)$ & $\bfH(\div;K)$ \medskip \\ 

\makecell{ Function \\ format } & \makecell{ $\bfb\cdot(\bfx-\bfx_K)+c$ \\ 
 $ \bfb \in \bfP^e_0(b_h;K),$\\
 $ c\in \mathcal{P}_0(K)$ } & \makecell{ $\bfa\times(\bfx-\bfx_K) + \bfb$ \\ 
 $ \bfa \in \bfP^f_0(a_h;K),$\\
 $ \bfb \in \bfP^e_0(b_h;K)$ } &  \makecell{ $c(\bfx-\bfx_K)+\bfa$\\ $c\in \mathcal{P}_0(K),$\\
 $ \bfa \in \bfP^f_0(a_h;K)$ } \medskip  \\ 
 \makecell{Jump \\ conditions} 
 & \makecell{ $[v_h]_{\Gamma^K_h} =0$ \\ $[b_h \nabla v_h\cdot \bar{\bfn}]_{\Gamma^K_h} = 0$ } 
 & \makecell{ $[\bfv_h\times \bar{\bfn}]_{\Gamma^K_h} = \mathbf{ 0}$ \\ $[a_h\curl\bfv_h\times \bar{\bfn}]_{\Gamma^K_h} = \mathbf{ 0}$ \\ $[b_h \bfv_h\cdot \bar{\bfn}]_{\bfx_K} = { 0}$ }
 & \makecell{ $[\bfv_h\cdot \bar{\bfn}]_{\Gamma^K_h} = { 0}$ \\ $[a_h\bfv_h\times \bar{\bfn}]_{\bfx_K} = \mathbf{ 0}$ \\ $[\div \bfv_h ]_{\Gamma^K_h} = { 0}$ } \medskip \\
 \bottomrule
\end{tabular}
  \caption{IFE spaces, their dimensions, their function format, the corresponding jump conditions and the Sobolev spaces to which they belong, where $\bfx_K$ is any point at $\Gamma^K_h$.}
  \label{tab_IFE_func}
\end{table}

In addition, on each interface element, these spaces admit a local exact sequence established in the following lemma.
\begin{lemma}
\label{lem_loc_exact}
The following sequence is a complex and is exact:
\begin{equation}
\label{local_DR_IFE}
\adjustbox{scale=0.95,center}{%
\begin{tikzcd}[column sep=scriptsize]
\mathbb{R} \arrow[r,"\hookrightarrow"] 
&[1em]  S^{n}_h(b_h;K) \arrow[r, "\grad"] 
&[1em]  \bfS^e_h(a_h,b_h;K)  \arrow[r, "\curl"] 
&[1em]  \bfS^{f}_h(a_h;K)   \arrow[r, "\div"] 
&[1em]  \mathcal{P}_0(K) \arrow[r] & 0 .
\end{tikzcd}
}
\end{equation}
Furthermore, the constant vector spaces $\bfP^f_0(a_h;K)$ and $\bfP^e_0(b_h;K)$, respectively, are the $\curl$-free and $\div$-free subspaces of $\bfS^e_h(a_h,b_h;K)$ and $\bfS^f_h(a_h;K)$:
\begin{subequations}
\label{AB_space3}
\begin{align}
    &  \bfP^e_0(b_h;K) = \grad S^n_h(b_h;K) = \bfS^e_h(a_h,b_h;K)\cap\ker(\curl), \label{AB_space3_1}   \\
    &  \bfP^f_0(a_h;K) = \curl \bfS^e_h(a_h,b_h;K) = \bfS^f_h(a_h;K)\cap\ker(\div). \label{AB_space3_2}
\end{align}
\end{subequations}
\end{lemma}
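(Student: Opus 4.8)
The plan is to reduce the statement to elementary linear algebra by computing $\grad$, $\curl$ and $\div$ explicitly on the function formats of Table~\ref{tab_IFE_func}. Writing $\bfr:=\bfx-\bfx_K$ and using the constant-coefficient identities $\grad(\bfc\cdot\bfr)=\bfc$, $\curl(\bfc\times\bfr)=2\bfc$, $\div(c\,\bfr)=3c$ together with the fact that functions in $\bfP^e_0(b_h;K)$ and $\bfP^f_0(a_h;K)$ are constant on each subelement $K^\pm_h$, one finds subelementwise that $\grad(\bfb\cdot\bfr+c)=\bfb$, $\curl(\bfa\times\bfr+\bfb)=2\bfa$ and $\div(c\,\bfr+\bfa)=3c$. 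The one thing to check is that these are the full distributional identities on the whole of $K$: the IFE functions lie, respectively, in $H^1(K)$, $\bfH(\curl;K)$, $\bfH(\div;K)$ (as observed just before the table), so their weak derivatives carry no singular part on $\Gamma^K_h$; equivalently, $\curl\bfb=\mathbf 0$ for $\bfb\in\bfP^e_0(b_h;K)$ and $\div\bfa=0$ for $\bfa\in\bfP^f_0(a_h;K)$, because the would-be surface contributions are proportional to $[\bfb]\times\bar\bfn_K$ and $[\bfa\cdot\bar\bfn_K]$, which vanish by the $\bfH(\curl;K)$- and $\bfH(\div;K)$-memberships imposed in \eqref{AB_space1}. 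Hence $\grad S^n_h(b_h;K)=\bfP^e_0(b_h;K)$, $\curl\bfS^e_h(a_h,b_h;K)=\bfP^f_0(a_h;K)$ (the harmless factor $2$ disappears since these are linear spaces), and $\div\bfS^f_h(a_h;K)=\mathcal P_0(K)$; since $\bfP^e_0(b_h;K)\subset\bfS^e_h(a_h,b_h;K)$ and $\bfP^f_0(a_h;K)\subset\bfS^f_h(a_h;K)$ (take $\bfa=\mathbf 0$, resp.\ $c=0$, in the formats), all three maps are well defined, and $\curl\,\grad=\mathbf 0$, $\div\,\curl=0$ follow immediately, so \eqref{local_DR_IFE} is a complex.

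For exactness I would match each kernel with the preceding image. Injectivity of $\mathbb R\hookrightarrow S^n_h(b_h;K)$ is clear. From $\grad(\bfb\cdot\bfr+c)=\bfb$ we get $\ker\grad=\{v_h:\bfb=\mathbf 0\}=\mathcal P_0(K)=\mathbb R$, using that $\bfb^+=\mathbf 0$ forces $\bfb^-=M^{e,b_h}_K\bfb^+=\mathbf 0$ by \eqref{AB_space2}. From $\curl(\bfa\times\bfr+\bfb)=2\bfa$ we get $\ker\curl=\{\bfv_h:\bfa=\mathbf 0\}=\bfP^e_0(b_h;K)$, and since every $\bfb\in\bfP^e_0(b_h;K)$ equals $\grad(\bfb\cdot\bfr)$ with $\bfb\cdot\bfr\in S^n_h(b_h;K)$, this yields $\bfP^e_0(b_h;K)=\grad S^n_h(b_h;K)=\bfS^e_h(a_h,b_h;K)\cap\ker\curl$, i.e.\ \eqref{AB_space3_1}. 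Likewise $\ker\div=\{\bfw_h:c=0\}=\bfP^f_0(a_h;K)$, and every $\bfa\in\bfP^f_0(a_h;K)$ equals $\curl(\tfrac12\bfa\times\bfr)$ with $\tfrac12\bfa\times\bfr\in\bfS^e_h(a_h,b_h;K)$, giving $\bfP^f_0(a_h;K)=\curl\bfS^e_h(a_h,b_h;K)=\bfS^f_h(a_h;K)\cap\ker\div$, i.e.\ \eqref{AB_space3_2}; finally $\div$ maps onto $\mathcal P_0(K)$ as noted. Collecting these identities gives exactness of \eqref{local_DR_IFE}, and the alternating sum $1-4+6-4+1=0$ of the dimensions in Table~\ref{tab_IFE_func} is a convenient consistency check.

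The only genuinely delicate point is the passage from the subelementwise computation of $\grad$, $\curl$, $\div$ to their distributional meaning across the approximate interface $\Gamma^K_h$: everything hinges on the $\bfH(\curl;K)$- and $\bfH(\div;K)$-conformity encoded in $\bfP^e_0$ and $\bfP^f_0$ annihilating exactly the surface terms that would otherwise appear. A secondary, routine point is that the splitting of a function in $\bfS^e_h(a_h,b_h;K)$ (resp.\ $\bfS^f_h(a_h;K)$, $S^n_h(b_h;K)$) into its affine and constant parts is unique --- a nonzero constant vector $\bfc$ makes $\bfc\times\bfr$ genuinely non-constant on the nonempty-interior subelement $K^+_h$, and the jump relations \eqref{AB_space2} then propagate the conclusion to $K^-_h$ --- which is what legitimizes ``reading off'' the parameters $\bfa,\bfb,c$ and hence the dimensions $4,6,4$; I would state this once and reuse it. No step uses more than the single-cut setting of this section, so the same argument applies verbatim to the general-geometry spaces of Appendix~\ref{sec:appen}.
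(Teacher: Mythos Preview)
Your proposal is correct and is precisely the ``direct verification'' the paper asserts but does not spell out: the paper's own proof consists solely of the sentence ``It can be verified directly.'' Your explicit computation of $\grad$, $\curl$, $\div$ on the formats of Table~\ref{tab_IFE_func}, together with the observation that the $\bfH(\curl;K)$- and $\bfH(\div;K)$-conformity of $\bfP^e_0$ and $\bfP^f_0$ kills the potential surface contributions on $\Gamma^K_h$, is exactly what that sentence means.
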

\begin{proof}
It can be verified directly.
\end{proof}

\begin{remark}
In computation, the IVE functions and their curls are projected to the constant spaces $\bfP^e_0(\beta_h;K)$ and $\bfP^f_0(\alpha_h;K)$.
To ensure an optimal first order convergence, the projections need not be to the full IFE spaces $S^n_h(\beta_h; K)$, $\bfS^e_h(\alpha_h,\beta_h;K)$ and $\bfS^f_h(\beta_h;K)$. 
But these spaces will be useful in the computation procedure of the projections.
\end{remark}

\RG{To end this section, we show trace inequalities for piecewise constant IFE functions. The key is the generic constant is independent of the location of the interface.
\begin{lemma}
\label{lem_trace_inequa}
Given each interface element $K$ and one of its face $F$, for every $\bfc \in \bfP^e_0(b_h;K)$ or $\bfc \in \bfP^f_0(a_h;K)$, there holds that
\begin{equation}
\label{lem_trace_inequa_eq0}
\| \bfc \|_{L^2(F)}\lesssim h^{-1/2}_K \| \bfc \|_{L^2(K)},
\end{equation}
where the generic constant is independent of the location of the interface but depends on $a_h$ or $b_h$.
\end{lemma}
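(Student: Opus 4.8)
The plan is to reduce the trace inequality on the full interface element $K$ to trace inequalities on its two polyhedral subdomains $K^{\pm}_h$ (separated by $\Gamma^K_h$), on each of which $\bfc$ is a \emph{single} constant vector $\bfc^{\pm}\in[\mathcal P_0(K^{\pm}_h)]^3$. First I would write $\|\bfc\|^2_{L^2(F)}=\|\bfc\|^2_{L^2(F\cap K^+_h)}+\|\bfc\|^2_{L^2(F\cap K^-_h)} = |\bfc^+|^2\,|F\cap K^+_h| + |\bfc^-|^2\,|F\cap K^-_h|$, and similarly $\|\bfc\|^2_{L^2(K)} = |\bfc^+|^2\,|K^+_h| + |\bfc^-|^2\,|K^-_h|$, so that the claim would follow from the two ``one-sided'' bounds $|F\cap K^{\pm}_h|\lesssim h_K^{-1}|K^{\pm}_h|$. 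This cannot hold uniformly when one of the pieces $K^{\pm}_h$ is a sliver of vanishing volume while its face-intersection is comparatively large, which is precisely the small-cut scenario; so the argument must instead exploit the jump relation $\bfc^-=M^{e,c_h}_K\bfc^+$ (resp.\ $M^{f,c_h}_K\bfc^+$) from \eqref{AB_space2}, whose eigenvalues are bounded above and below by constants depending only on $\tilde c = c^+_h/c^-_h$. Hence $|\bfc^-|\simeq|\bfc^+|$ with constants depending only on $a_h$ or $b_h$, and it suffices to prove $\|\bfc\|_{L^2(F)}\lesssim h_K^{-1/2}\|\bfc\|_{L^2(K^+_h)}$ together with the analogous statement involving $K^-_h$, and then combine; equivalently, it suffices to show $|F|\lesssim h_K^{-1}\bigl(|K^+_h|+|K^-_h|\bigr) = h_K^{-1}|K|$, which is the \emph{standard} scaled trace inequality on the shape-regular simplex $K$ and is true since $|F|\lesssim h_K^2$ and $|K|\simeq h_K^3$.

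So the core reduction is: control $|\bfc^{\pm}|$ by each other via the bounded spectrum of $M^{e,c_h}_K$ or $M^{f,c_h}_K$, then one of the volumes $|K^{\pm}_h|$ must be at least a fixed fraction of $|K|$ — indeed $|K^+_h|+|K^-_h|=|K|\simeq h_K^3$, so $\max\{|K^+_h|,|K^-_h|\}\gtrsim h_K^3$ — and use the constant from the larger side together with $|\bfc^-|\simeq|\bfc^+|$ to bound $\|\bfc\|_{L^2(K)}^2 = |\bfc^+|^2|K^+_h|+|\bfc^-|^2|K^-_h| \gtrsim h_K^3\bigl(|\bfc^+|^2+|\bfc^-|^2\bigr)$ from below, while $\|\bfc\|_{L^2(F)}^2 = |\bfc^+|^2|F\cap K^+_h|+|\bfc^-|^2|F\cap K^-_h| \le \bigl(|\bfc^+|^2+|\bfc^-|^2\bigr)|F| \lesssim h_K^2\bigl(|\bfc^+|^2+|\bfc^-|^2\bigr)$. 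Dividing the two gives $\|\bfc\|^2_{L^2(F)}\lesssim h_K^{-1}\|\bfc\|^2_{L^2(K)}$, which is \eqref{lem_trace_inequa_eq0}.

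The one subtlety I would be careful about is making sure the equivalence $|\bfc^-|\simeq|\bfc^+|$ is genuinely uniform in the interface location: the matrices $M^{e,c_h}_K$ and $M^{f,c_h}_K$ in \eqref{lem_ABeigen_eq1} are orthogonal conjugations (via $T_K$) of fixed diagonal matrices with entries in $\{1,\tilde c\}$, so their singular values are exactly $1$ and $\tilde c$ regardless of the orientation $T_K$ of $\Gamma^K_h$; hence the ratio $|\bfc^-|/|\bfc^+|$ lies in $[\min\{1,\tilde c\},\max\{1,\tilde c\}]$ independently of where $\Gamma^K$ cuts $K$. This is the step where the ``independent of interface location'' claim is actually used, and it is where one must resist the temptation to instead bound $|F\cap K^{\pm}_h|$ by $|K^{\pm}_h|$ one-sidedly (which fails on slivers). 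I expect the main obstacle — really the only nontrivial point — to be this: recognizing that neither $\|\bfc\|_{L^2(F)}$ nor $\|\bfc\|_{L^2(K)}$ can be split favorably side-by-side, and that the correct move is to throw away the side decomposition on both ends, bound everything in terms of $|\bfc^+|^2+|\bfc^-|^2$ and the \emph{shape-regular} quantities $|F|\lesssim h_K^2$, $|K|\simeq h_K^3$, after first trading $\bfc^-$ for $\bfc^+$ through the spectrally-bounded jump matrix.
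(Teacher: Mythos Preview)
Your proof is correct and rests on the same essential insight as the paper's --- the equivalence $|\bfc^-|\simeq|\bfc^+|$ coming from the fixed spectrum $\{1,\tilde c\}$ of the jump matrices $M^{e,c_h}_K$, $M^{f,c_h}_K$ --- but your execution is more direct. The paper argues geometrically that for at least one sign $\pm$ there is a pyramid $P\subseteq K^{\pm}_h$ with base $F^{\pm}_h$ and height $\mathcal O(h_K)$, uses the standard trace inequality on that pyramid for the ``good'' piece, and for the remaining piece extends the constant $\bfc^-$ to all of $K$ and applies the trace inequality on the shape-regular simplex together with $\|\bfc^-\|_{L^2(K)}\lesssim\|\bfc\|_{L^2(K)}$. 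You instead bypass any pyramid construction: since $\bfc^{\pm}$ are constants, you reduce both sides to pure area/volume arithmetic, bound $\|\bfc\|^2_{L^2(F)}\le(|\bfc^+|^2+|\bfc^-|^2)|F|\lesssim h_K^2(|\bfc^+|^2+|\bfc^-|^2)$ by discarding the face split, and bound $\|\bfc\|^2_{L^2(K)}$ from below by $h_K^3(|\bfc^+|^2+|\bfc^-|^2)$ using only that $\max\{|K^+_h|,|K^-_h|\}\ge|K|/2$ together with $|\bfc^-|\simeq|\bfc^+|$. Your route is shorter and avoids the (mild) geometric claim about pyramids; the paper's route is perhaps more in the spirit of trace inequalities and would generalize more readily to non-constant piecewise polynomials, where your area/volume arithmetic would no longer be available.
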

\begin{proof}
By \eqref{lem_ABeigen_eq1} and \eqref{AB_space2}, we know that there is a matrix $M$ with $\| M \|_{\infty} \lesssim 1$ such that
\begin{equation}
\label{lem_trace_inequa_eq1}
\| \bfc^- \| = \| M \bfc^+ \|\lesssim \| \bfc^+\|.
\end{equation}
Without loss of generality, we only consider the case that $F$ intersects with the interface and assume $F$ is cut into $F^{\pm}_h$. 
By geometry, it is not hard to see that either there is a pyramid $P\subseteq K^+_h$ which has the base $F^+_h$ and height $\mathcal{O}(h_K)$ or this is true for the ``-" piece. 
Again, without loss of generality, we assume the former case is true. Then, the standard trace inequality on $P$ simply implies
\begin{equation}
\label{lem_trace_inequa_eq2}
\| \bfc^+ \|_{L^2(F^+_h)} \lesssim h^{-1/2}_K \| \bfc^+ \|_{L^2(K^+_h)} .
\end{equation}
As for the ``$-$" piece, we apply the trace inequality on the entire element $K$ with \eqref{lem_trace_inequa_eq1} to obtain
\begin{equation}
\label{lem_trace_inequa_eq3}
\| \bfc^- \|_{L^2(F^-_h)} \lesssim h^{-1/2}_K \| \bfc^- \|_{L^2(K)} \lesssim h^{-1/2}_K ( \| \bfc^- \|_{L^2(K^-_h)} + \| M \bfc^+ \|_{L^2(K^+_h)} )  \lesssim h^{-1/2}_K \| \bfc \|_{L^2(K)}.
\end{equation}
Combining \eqref{lem_trace_inequa_eq2} and \eqref{lem_trace_inequa_eq3}, we have the desired estimate.
\end{proof}
}

\section{Immersed Virtual Element Spaces}
\label{sec:IVE}

It is generally not possible to construct conforming piecewise polynomial spaces to the Sobolev spaces in \eqref{tildeHspace}. 
Traditionally, Lagrange, N\'ed\'elec and Raviart-Thomas elements are conforming to $H^1$, $\bfH(\curl)$ and $\bfH(\div)$ spaces, 
yet they cannot provide sufficient approximation when a mesh-cutting interface is present. 
The IFE spaces introduced above can capture the jump information, but at the cost of losing conformity. 
In this section, we construct immersed virtual element (IVE) spaces based on solutions to local interface problems. 
IVE spaces can be both conforming and satisfy interface conditions perfectly. 
\LC{For a non-interface element $K$, the local finite element space is simply defined as the linear polynomial space $\mathcal{P}_1(K)$, 
the lowest order N\'ed\'elec space $\mathcal{ND}_0(K)$~\cite{Nedelec1980}, and the lowest order Raviart-Thomas space $\mathcal{RT}_0(K)$~\cite{Raviart.P;Thomas.J1977}.}

Given an interface element $K\in\mathcal{T}^i_h$, we let $\mathcal{N}_K$ and $\mathcal{E}_K$ be the collection of all the nodes and edges in the triangulation of $\partial K$. 
Note that the nodes include the vertices in the background mesh and cutting points, and the edges include all the sub-edges cut by the interface and newly-added edges. 
Then $\mathcal{F}_K$ denotes the resulting triangular faces.
Given a $T$ which may be a cube, square, tetrahedron or triangle, we let $\mathcal{ND}_0(T)$ and $\mathcal{RT}_0(T)$ be the first family of N\'ed\'elec polynomial space and the Raviart-Thomas polynomial space of the lowest degree on $T$. 
The Lagrange space is simply the first-degree polynomial space $\mathcal{P}_1(T)$.

Next, we also need two weighted projections onto the piecewise constant vector spaces $\bfP^f_0(c_h;K)$ and $\bfP^e_0(c_h;K)$ which will be used in the definition of the IVE spaces as well as the computation:
\begin{subequations}
\label{weight_proj}
\begin{align}
    &  \bfPi^{e,c_h}_K: \bfH(\curl;K)\rightarrow \bfP^e_0(c_h;K), ~~ \text{satisfying} \label{B_proj_1} \\
     & ~~~~~~~ \int_K c_h \bfPi^{e,c_h}_K \bfv_h\cdot \bfp_h \dd \bfx = \int_K c_h  \bfv_h\cdot \bfp_h \dd \bfx, ~~~ \forall \bfp_h\in\bfP^e_0(c_h;K), \nonumber \\
    &  \bfPi^{f,c_h}_K: \bfH(\div;K) \rightarrow \bfP^f_0(c_h;K), ~~ \text{satisfying}  \label{A_proj_1} \\
    & ~~~~~~~  \int_K c_h \bfPi^{f,c_h}_K \bfv_h\cdot \bfp_h \dd \bfx = \int_K c_h  \bfv_h\cdot \bfp_h \dd \bfx, ~~~ \forall \bfp_h\in\bfP^f_0(c_h;K). \nonumber
\end{align}
\end{subequations}
The super-scripts, $e$ and $f$, still emphasize the distinct Sobolev spaces, i.e., the images of $\bfPi^{e,c_h}_K$ and $\bfPi^{f,c_h}_K$ belong to $\bfH(\curl;K)$ and $\bfH(\div;K)$, respectively.

\subsection{The $H^1$ IVE Space}
We first consider the $H^1$ case. Given the boundary triangulation, we define the boundary function space:
\begin{equation}
\label{H1_IVEM_bd}
\mathcal{B}^n_h(\partial K) = \{ v_h\in C(\partial K) ~:~ v_h|_T\in\mathcal{P}_1(T), ~ \forall T \in \mathcal{F}_K\}. 
\end{equation}
Then, on an interface element $K$, the $H^1$ IVE space involving the discontinuous coefficient $\beta$ is defined as
\begin{equation}
\begin{split}
\label{H1_IVEM}
V^n_h(K) = \big\{ v_h~:~ 
\beta \nabla v_h \in \bfH(\div;K),
~ \nabla \cdot (\beta \nabla v_h) = 0, 
~ v_h|_{\partial K}\in \mathcal{B}^n_h(\partial K) \big\}.
 \end{split}
\end{equation}
Clearly $V^n_h(K)\subseteq H^1(K)$. On the boundary $\partial K$ we use the continuous $\mathcal{P}_1$ finite element space on the body-fitted surface triangulation. 
In the interior we use $\beta$-harmonic extension so that the shape functions satisfy the jump conditions on the interface. 

The property of the nodal DoFs is given by the following lemma.

\begin{lemma}
The space $V^n_h(K)$ has nodal DoFs $\{v_h(\bfz)$, $\bfz\in\mathcal{N}_K\}$.
\end{lemma}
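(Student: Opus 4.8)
The claim is that the finite-dimensional space $V^n_h(K)$ is unisolvent with respect to the nodal evaluation functionals $v_h \mapsto v_h(\bfz)$, $\bfz\in\mathcal{N}_K$; equivalently, the map $V^n_h(K)\to \mathbb{R}^{|\mathcal{N}_K|}$ sending $v_h$ to its nodal values is a bijection. The plan is to prove injectivity and surjectivity separately, using the definition \eqref{H1_IVEM} of $V^n_h(K)$ together with Lemma~\ref{lemma_interface_flat} (so that $\Gamma^K$ is geometrically close to a plane $\Gamma^K_h$) and the well-posedness of the underlying $\beta$-weighted elliptic interface problem.

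\emph{First I would establish injectivity.} Suppose $v_h\in V^n_h(K)$ has $v_h(\bfz)=0$ for all $\bfz\in\mathcal{N}_K$. Since $v_h|_{\partial K}\in \mathcal{B}^n_h(\partial K)$ consists of functions that are continuous on $\partial K$ and piecewise $\mathcal P_1$ on each triangle $T\in\mathcal F_K$, vanishing at all the triangulation vertices $\mathcal N_K$ forces $v_h|_{\partial K}\equiv 0$. Then $v_h$ solves the homogeneous local $\beta$-harmonic interface problem: $\nabla\cdot(\beta\nabla v_h)=0$ in $K^\pm_h$, the jump conditions $[v_h]_{\Gamma^K_h}=0$ and $[\beta\nabla v_h\cdot\bar\bfn]_{\Gamma^K_h}=0$ across the (approximate) interface, and $v_h=0$ on $\partial K$. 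Testing the equation against $v_h$ and integrating by parts over $K^+_h$ and $K^-_h$ separately, the interface terms cancel by the flux-continuity jump condition and the boundary term vanishes, giving $\int_K \beta|\nabla v_h|^2\dd\bfx = 0$; hence $\nabla v_h=0$ on each piece, so $v_h$ is piecewise constant, and the continuity across $\Gamma^K_h$ together with the zero boundary trace forces $v_h\equiv 0$. Thus the nodal-value map is injective.

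\emph{Next I would handle surjectivity by a dimension count.} It suffices to show $\dim V^n_h(K)=|\mathcal N_K|$, since an injective linear map between spaces of equal dimension is surjective. For any prescribed nodal data $\{c_\bfz\}_{\bfz\in\mathcal N_K}$, there is a unique $g\in\mathcal B^n_h(\partial K)$ with $g(\bfz)=c_\bfz$ (standard $\mathcal P_1$ unisolvence on the surface triangulation, using Assumption~\hyperref[asp:A]{A} that $\partial K$ is triangulated with the maximum angle condition and no interior vertices). Then the interface problem of finding $v_h$ with $\beta\nabla v_h\in\bfH(\div;K)$, $\nabla\cdot(\beta\nabla v_h)=0$ in $K^\pm_h$, the two jump conditions across $\Gamma^K_h$, and $v_h|_{\partial K}=g$, is a well-posed Dirichlet problem for a second-order elliptic operator with piecewise-constant positive coefficient $\beta$; by the standard Lax--Milgram / lifting argument (lift $g$ to an $H^1(K)$ function, solve the homogeneous-trace problem via coercivity of $\int_K\beta\nabla\cdot\nabla$ on $H^1_0(K)$, and restore regularity on each subdomain so that $\beta\nabla v_h\in\bfH(\div;K)$ is meaningful) it has a unique solution. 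This gives a linear bijection $\mathcal B^n_h(\partial K)\to V^n_h(K)$, so $\dim V^n_h(K)=\dim\mathcal B^n_h(\partial K)=|\mathcal N_K|$. Combined with injectivity of the nodal-value map, this proves the lemma.

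\emph{Main obstacle.} The routine parts are the $\mathcal P_1$-unisolvence on the boundary triangulation and the $H^1$ coercivity estimate; the only genuinely delicate point is making precise the well-posedness of the local interface problem \emph{with the requirement $\beta\nabla v_h\in\bfH(\div;K)$ interpreted across the internal interface $\Gamma^K_h$}, i.e., justifying that the weak solution in $H^1(K)$ automatically has the piecewise $\bfH(\div)$-regularity encoding the flux jump condition, and that the $\beta$-harmonic extension with $\mathcal P_1$ boundary data is well defined even when $K^\pm_h$ are anisotropic small-cut pieces. This is handled exactly as in the Sobolev-space discussion of Section~\ref{sec:Sspace} (the local analogue of the space $H^2(\beta;K)$) and does not require the interface-location-independent constants of Lemma~\ref{lem_divcurl}; I would simply cite the standard elliptic interface theory (e.g., Ref.~\refcite{1998ChenZou,2002HuangZou}) for the existence and the piecewise $\bfH(\div)$ regularity of the flux.
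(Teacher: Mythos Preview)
Your argument is correct and follows essentially the same route as the paper: both reduce the unisolvence of $V^n_h(K)$ to (i) the nodal unisolvence of the surface $\mathcal P_1$ space $\mathcal B^n_h(\partial K)$ and (ii) the well-posedness of the $\beta$-harmonic Dirichlet extension from $\partial K$ into $K$. The paper compresses this into three sentences, while you spell out injectivity and the dimension count separately; the content is the same.

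One minor inaccuracy: the local space $V^n_h(K)$ in \eqref{H1_IVEM} is defined with the \emph{original} coefficient $\beta$, so the interface inside $K$ is $\Gamma^K$ and the subdomains are $K^\pm$, not the approximated $\Gamma^K_h$ and $K^\pm_h$. Your appeal to Lemma~\ref{lemma_interface_flat} is therefore unnecessary here. This does not affect the argument, since the Dirichlet problem $\nabla\cdot(\beta\nabla v_h)=0$ with $v_h|_{\partial K}=g\in H^{1/2}(\partial K)$ is well-posed by Lax--Milgram for any bounded positive measurable $\beta$, regardless of where the jump occurs.
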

\begin{proof}
First, $v_h\in V^n_h(K)$ is uniquely determined by the boundary condition in $\mathcal{B}^n_h(\partial K)$. 
The space $\mathcal{B}^n_h(\partial K)$ has the nodal DoFs associated with the nodes in $\mathcal{N}_K$. 
So functions in $V^n_h(K)$ are uniquely determined by their nodal values. 
\end{proof}

On any non-interface element, the standard Lagrange FE space, i.e., $\mathcal{P}_1(K)$, is used. 
Thus, with the nodal DoFs, we are able to define the global $H^1$-conforming IVE space as
\begin{equation}
\label{virtual_space_glob}
V^n_h= \{ v_h\in H^1_0(\Omega): v_h|_K\in V^n_h(K), ~ \forall K\in\mathcal{T}^i_h, ~~\text{and} ~~ v_h|_K\in \mathcal{P}_1(K), ~ \forall K\in\mathcal{T}^n_h  \}.
\end{equation}
Functions in $V^n_h$ are piecewise linear on the element boundary triangulation and in general non-polynomial inside interface element, 
which is the key to capture both the jump conditions and conformity.

Similar to the standard VEMs, the function values in the interior are not needed, 
and projections to certain spaces with approximation properties are computed instead. 
In the following paragraph, we show how to compute $\bfPi^{e,\beta_h}_K\nabla v_h$ for $v_h \in V^n_h(K)$. 
For every $\bfp_h\in \bfP^e_0(\beta_h;K)$, by \eqref{AB_HodgeStar1} there holds $\beta_h\bfp_h \in \bfP^f_0(\beta^{-1}_h;K) \subset  \bfH(\div;K) \cap\ker(\div)$. 
Then, the integration by parts shows
\begin{equation}
\begin{split}
\label{H1_proj1}
\int_K \beta_h \bfPi^{e,\beta_h}_K\nabla v_h \cdot  \bfp_h \dd \bfx =  \int_K  \nabla v_h \cdot  (\beta_h\bfp_h) \dd \bfx   =   \int_{\partial K} v_h (\beta_h\bfp_h\cdot\bfn) \dd s,
\end{split}
\end{equation}
of which the right-hand side is computable. The $L^2$ projection of $v_h$ to $\mathcal{P}_0(K)$ is not computable by the definition of the current space. 
However, this is not needed as $v_h$ itself can be approximated by the formula in Table \ref{tab_IFE_func} using the gradient obtained from \eqref{H1_proj1}. 
Denote this weighted $H^1$ projection by $\bar{v}_h$, the constant $c$ in Table \ref{tab_IFE_func} can be chosen such that $\int_{\partial K} \bar{v}_h = \int_{\partial K} v_h$. 
This constraint gives compactness, thus a sufficient approximation for computing the right-hand side term to guarantee the first order optimal convergence. %

\subsection{The $\bfH(\curl)$ IVE Space}

For the $\bfH(\curl)$ case, the boundary space is defined as
\begin{equation}
\label{Hcurl_VEM_B2}
\mathcal{B}^e_h(\partial K) = \{ \bfv_h : ~  \bfv_h|_{T} \in \mathcal{ND}_0(T), ~\forall T\in  \mathcal{F}_K, ~ (\bfv_h\cdot\bft)|_e ~ \text{is continuous on each} ~ e\in\mathcal{E}_K\}.
\end{equation}
Each $\mathcal{ND}_0(T)$ contains the 2D vector polynomials tangentially defined on the planar triangle $T$.
By formulations from trace finite elements on triangulated surfaces~\cite{OlshanskiiReuskenEtAl2009finite}, 
$\mathcal{B}^e_h(\partial K)$ is a well-defined finite element space on its own, and has the DoFs of $\int_e\bfv_h\cdot\bft \dd s$, $e\in\mathcal{E}_K$.
With this boundary space, we first introduce an auxiliary $\bfH(\curl)$ IVE space:
\begin{equation}
\begin{split}
\label{Hcurl_IVE_local}
\widetilde{\bfV}^e_h(K) = \{ &\bfv_h\in \; \bfH(\curl;K):  \beta\bfv_h \in \bfH(\div;K),~ \div(\beta\bfv_h)=0, ~  
\\ 
& \alpha\curl\,\bfv_h\in \bfH(\curl;K), ~ \curl\alpha\curl\,\bfv_h \in \bfP^f_0(\beta^{-1}_h;K), ~ \bfv^{\tau}_h \in \mathcal{B}^e_h(\partial K)  \}.
\end{split}
\end{equation}
Here, $\alpha$ and $\beta$ can be also understood as Hodge star operators exactly mimicking the second and the third vertical mappings in the desired diagram \eqref{Hodge_H_local} between different Sobolev spaces. %

The following lemma gives the well-posedness and DoFs of $\widetilde{\bfV}^e_h(K)$.
\begin{lemma}
\label{lem_Hcurl_IVE_DoF}
$\widetilde{\bfV}^e_h(K)$ is unisolvent with respect to the DoFs $\{\int_e\bfv_h\cdot\bft \dd s,~ e\in\mathcal{E}_K\}$ and $\{\int_K \beta_h  \bfv_h\cdot \bfp_h \dd \bfx,~ \bfp_h \in  \bfP^e_0(\beta_h;K)\}$.
\end{lemma}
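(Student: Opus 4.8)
The plan is to realize $\widetilde{\bfV}^e_h(K)$ as the solution space of the $\curl$-$\div$ boundary value problem of Lemma~\ref{lem_curlcurl}, read off its dimension from this identification, and then verify injectivity of the proposed degrees of freedom by an energy argument. First I would note that, by the definition \eqref{Hcurl_IVE_local}, every $\bfv_h \in \widetilde{\bfV}^e_h(K)$ solves
\[
\curl(\alpha\curl\bfv_h)=\bff_h, \quad \div(\beta\bfv_h)=0 \ \text{ in } K, \quad \bfv^{\tau}_h=\bfg \ \text{ on } \partial K,
\]
with $\bff_h := \curl(\alpha\curl\bfv_h) \in \bfP^f_0(\beta^{-1}_h;K)$ and $\bfg := \bfv^{\tau}_h \in \mathcal{B}^e_h(\partial K)$; conversely, for any such pair $(\bff_h,\bfg)$ Lemma~\ref{lem_curlcurl} furnishes a unique solution, which by construction satisfies all the conditions in \eqref{Hcurl_IVE_local}. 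To invoke Lemma~\ref{lem_curlcurl} I must check its hypotheses: $\bff_h \in \bfH(\div;K)\cap\ker(\div)$ since $\bfP^f_0(\beta^{-1}_h;K)\subset\bfH(\div;K)\cap\ker(\div)$; and every $\bfg \in \mathcal{B}^e_h(\partial K)$ lies in $\bfH(\rot;\partial K)$ because its tangential component is continuous across the surface edges of $\mathcal{E}_K$, while $\int_{\partial K}\rot_{\partial K}\bfg\,\dd s=0$ is automatic on a closed surface (pair the distributional $\rot_{\partial K}$ against the constant $1$). Hence $(\bff_h,\bfg)\mapsto\bfv_h$ is a linear isomorphism $\bfP^f_0(\beta^{-1}_h;K)\times\mathcal{B}^e_h(\partial K)\to\widetilde{\bfV}^e_h(K)$, so $\dim\widetilde{\bfV}^e_h(K)=3+\#\mathcal{E}_K$, which equals the number of the proposed DoFs (three interior moments since $\dim\bfP^e_0(\beta_h;K)=3$, plus one tangential moment per edge).

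It then remains to show that if all the DoFs of some $\bfv_h\in\widetilde{\bfV}^e_h(K)$ vanish, then $\bfv_h=0$. The vanishing of the edge moments $\int_e\bfv_h\cdot\bft\,\dd s$ together with the unisolvence of the surface edge space $\mathcal{B}^e_h(\partial K)$ with respect to those moments gives $\bfv^{\tau}_h=0$, i.e.\ $\bfv_h\times\bfn=0$ on $\partial K$. Writing $\bff_h=\curl(\alpha\curl\bfv_h)\in\bfP^f_0(\beta^{-1}_h;K)$ and using the Hodge-star relation \eqref{AB_HodgeStar1}, I set $\bfq_h := \beta^{-1}_h\bff_h\in\bfP^e_0(\beta_h;K)$, so that $\bff_h=\beta_h\bfq_h$. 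Testing $\curl(\alpha\curl\bfv_h)=\bff_h$ with $\bfv_h$ and applying the Green formula for $\bfH(\curl;K)$ — the boundary term vanishing because $\bfv_h\times\bfn=0$ on $\partial K$ — yields
\[
\int_K \alpha\,|\curl\bfv_h|^2\,\dd\bfx = \int_K\bff_h\cdot\bfv_h\,\dd\bfx = \int_K \beta_h\,\bfv_h\cdot\bfq_h\,\dd\bfx = 0
\]
by the vanishing of the interior moment against $\bfq_h$; since $\alpha>0$ this forces $\curl\bfv_h=0$. Then $\bfv_h\in\ker(\curl)\cap\bfH(\curl;K)$ has vanishing tangential trace, so by the exact sequence $\grad H^1_0(K)=\ker(\curl)\cap\bfH_0(\curl;K)$ on the simply connected $K$ we get $\bfv_h=\grad\psi$ for some $\psi\in H^1_0(K)$; finally $\div(\beta\bfv_h)=0$ gives $\int_K\beta\,|\grad\psi|^2\,\dd\bfx=-\int_K\psi\,\div(\beta\grad\psi)\,\dd\bfx=0$, hence $\bfv_h=\grad\psi=0$.

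The step I expect to be the real obstacle is the identification in the first paragraph: showing that $\widetilde{\bfV}^e_h(K)$ is genuinely parametrized by $\bfP^f_0(\beta^{-1}_h;K)\times\mathcal{B}^e_h(\partial K)$ via Lemma~\ref{lem_curlcurl}. This is where one must be careful that every admissible boundary datum $\bfg$ is regular enough — that it lies in $\bfH(\rot;\partial K)$ with $\int_{\partial K}\rot_{\partial K}\bfg\,\dd s=0$ — which relies precisely on the interface-fitted boundary triangulation and the tangential continuity built into $\mathcal{B}^e_h(\partial K)$, and it tacitly uses that $K$, cut by the planar piece $\Gamma^K_h$, is a simply connected Lipschitz polyhedron so that the well-posedness lemma applies. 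By contrast, the injectivity/energy argument is routine once \eqref{AB_HodgeStar1} is used to write $\bff_h=\beta_h\bfq_h$ with $\bfq_h\in\bfP^e_0(\beta_h;K)$.
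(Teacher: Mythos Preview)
Your proof is correct and follows essentially the same approach as the paper: both invoke Lemma~\ref{lem_curlcurl} to identify $\widetilde{\bfV}^e_h(K)$ with the data space $\bfP^f_0(\beta^{-1}_h;K)\times\mathcal{B}^e_h(\partial K)$ for the dimension count, then use the Hodge-star relation \eqref{AB_HodgeStar1} together with the energy identity to force $\curl\bfv_h=0$ when the DoFs vanish. The only cosmetic difference is in the final step, where the paper concludes $\bfv_h=\mathbf{0}$ by appealing directly to Lemma~\ref{lem_divcurl2}, whereas you unpack that lemma by writing $\bfv_h=\nabla\psi$ with $\psi\in H^1_0(K)$ and testing $\div(\beta\nabla\psi)=0$ against $\psi$.
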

\begin{proof}
Let $\bfq_h\in \bfP^f_0(\beta^{-1}_h;K)$ and $\bfr_h\in\mathcal{B}^e_h(\partial K)$ be some arbitrary data functions. 
Let us formulate the following local interface problem arising from the definition in \eqref{Hcurl_IVE_local}:
\begin{equation}
\begin{split}
\label{lem_Hcurl_IVE_DoF_eq1}
\curl\alpha\curl\,\bfv_h = \bfq_h, ~~ \div(\beta\bfv_h) = 0, ~~ \text{in} ~ K, ~~~ \bfv_h^{\tau}= \bfr_h ~~ \text{on} ~ \partial K.
\end{split}
\end{equation}
Note that the solutions $\bfv_h$ of \eqref{lem_Hcurl_IVE_DoF_eq1} form the space $\widetilde{\bfV}^e_h(K)$.
\RG{The well-posedness of \eqref{lem_Hcurl_IVE_DoF_eq1} is given by Lemma \ref{lem_curlcurl}. 
It implies that the dimension of the solution space is equal to the (finite) dimension of the space of possible data functions: 
the boundary data function $\bfr_h$ which is uniquely determined by $\{\int_e\bfr_h\cdot\bft \dd s, ~ e\in\mathcal{E}_K\}$ and
the right-hand side $\bfq_h\in \bfP^f_0(\beta^{-1}_h;K)$ which has dimension 3.} 
Therefore, the dimension of solution space matches the DoFs count $3+|\mathcal{E}_K|$ for this virtual space.

In the rest of the proof, it needs to be established that the given moments on edges and element interior are indeed DoFs. 
To this end, it suffices to show that a function with vanishing DoFs is trivial in this space. 
Noticing $\curl\alpha\curl\,\bfv_h \in \bfP^f_0(\beta^{-1}_h;K) = \beta_h \bfP^e_0(\beta_h;K)$ by \eqref{AB_HodgeStar1}, 
thus from the interior DoFs in Lemma \ref{lem_Hcurl_IVE_DoF} we have
\begin{equation}
\label{lem_Hcurl_IVE_DoF_eq2}
0 = \int_K \bfv_h\cdot \curl\alpha\curl\,\bfv_h \dd \bfx = \int_K \alpha \curl\,\bfv_h\cdot \curl\,\bfv_h \dd \bfx - \int_{\partial K} \bfv^{\tau}_h\cdot (\alpha \curl\bfv_h\times \bfn) \dd s,
\end{equation}
where we have used integration by parts in the second equality. 
As the edge moments are zero, we know $\bfv^{\tau}_h =0 $. So, we have $ \int_K \alpha \curl\,\bfv_h\cdot \curl\,\bfv_h \dd \bfx = 0$ 
which implies $\curl\bfv_h = \mathbf{ 0}$ as $\alpha$ is positive. 
In addition, by $\div(\beta \bfv_h) = 0$ and the vanishing trace, we derive from Lemma \ref{lem_divcurl2} that $\bfv_h = \mathbf{ 0}$.
\end{proof}

Similar to the classical VEM, both $\bfv_h$ and $\curl \bfv_h$ are not computable. 
But we shall see that their weighted projections to the IFE spaces are computable.
 We first address the projection of $\curl \bfv_h$ which will be then used to develop a new IVE space as a subspace of $\widetilde{\bfV}^e_h$ that only has the edge DoFs.

By the Hodge star property \eqref{AB_HodgeStar1}, we argue that $\bfPi^{f,a_h}_K\curl\bfv_h$ defined in \eqref{A_proj_1} is computable for any positive $a_h$. 
In particular, for each $\bfp_h\in\bfP^f_0(a_h;K)$ we know $a_h\bfp_h \in \bfH (\curl; K)\cap \ker(\curl)$. 
Then, applying the projection $\bfPi^{f,a_h}_K$ to $\curl\bfv_h$ with the integration by parts, we obtain
\begin{equation}
\begin{split}
\label{A_proj_2}
& \int_K a_h \bfPi^{f,a_h}_K \curl\bfv_h\cdot \bfp_h \dd \bfx =  \int_K  \curl\bfv_h\cdot (a_h\bfp_h) \dd \bfx = \int_{\partial K}  (\bfv_h\times \bfn) (a_h \bfp_h)^{\tau} \dd s.
\end{split}
\end{equation}
The right-hand side above is computable through only the edge DoFs $\int_e\bfv_h\cdot\bft \dd s$, $e\in \mathcal{E}_K$, since the boundary triangulation is known.
In computation, $\bfPi^{f,\alpha_h}_K \curl\bfv_h$ provides a sufficient approximation order on elements intersecting the interface.

Now, we let $\bfS^e_h(a_h,0;K)$ be the subspace of $\bfS^e_h(a_h,b_h;K)$ that has the fixed $\bfb=\mathbf{ 0}$ in Table \ref{tab_IFE_func}. 
Namely, the functions in $\bfS^e_h(a_h,0;K)$ are just $\bfw_h = \bfa\times(\bfx-\bfx_K)$, $\bfa \in \bfP^f_0(a_h;K)$, and thus the space only has the dimension 3.
Then, based on the auxiliary space $\widetilde{\bfV}^e_h(K)$, we introduce its subspace:
\begin{equation}
\begin{split}
\label{Hcurl_IVE_local_2_new}
\bfV^e_h(K) = \{ \bfv_h \in \widetilde{\bfV}^e_h(K): &\int_K  \curl \bfv_h\cdot \bfw_h \dd \bfx = 0, ~ \forall \bfw_h\in \bfS^e_h(\beta^{-1}_h,0;K) \}. %
\end{split}
\end{equation}
Clearly, $\bfV^e_h(K) \subset \widetilde{\bfV}^e_h(K)$, and the following lemma ensures that only the edge DoFs are needed for the unisolvency of this subspace due to the extra constraint. 
\begin{lemma}
\label{lem_Hcurl_IVE_DoF_2}
${\bfV}^e_h(K)$ is unisolvent with respect to the edge DoFs $\{\int_e\bfv_h\cdot\bft \dd s$, $e\in\mathcal{E}_K\}$.
\end{lemma}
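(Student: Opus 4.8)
The plan is to argue by a dimension count combined with a triviality (kernel) argument, exactly parallel to the proof of Lemma~\ref{lem_Hcurl_IVE_DoF} but now accounting for the extra constraints. First I would observe that $\bfV^e_h(K)$ is cut out of $\widetilde{\bfV}^e_h(K)$ by the $3$ linear functionals $\bfv_h\mapsto \int_K \curl\bfv_h\cdot\bfw_h\dd\bfx$, $\bfw_h\in\bfS^e_h(\beta^{-1}_h,0;K)$, so $\dim\bfV^e_h(K)\ge \dim\widetilde{\bfV}^e_h(K)-3 = (3+|\mathcal{E}_K|)-3 = |\mathcal{E}_K|$, which matches the number of edge DoFs. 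Hence it suffices to show that any $\bfv_h\in\bfV^e_h(K)$ with vanishing edge moments $\int_e\bfv_h\cdot\bft\dd s=0$ for all $e\in\mathcal{E}_K$ must be zero; this simultaneously gives unisolvency and shows the constraints are independent (so the dimension is exactly $|\mathcal{E}_K|$).

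For the kernel argument, let $\bfv_h\in\bfV^e_h(K)$ have vanishing edge DoFs. Then $\bfv^\tau_h\in\mathcal{B}^e_h(\partial K)$ is determined by those edge moments, so $\bfv^\tau_h=\mathbf{0}$, i.e.\ $\bfv_h\times\bfn=\mathbf{0}$ on $\partial K$. The key new point is to recover the interior information using only the edge DoFs. Recall from \eqref{A_proj_2} that for any $\bfp_h\in\bfP^f_0(a_h;K)$, $\int_K\curl\bfv_h\cdot(a_h\bfp_h)\dd\bfx=\int_{\partial K}(\bfv_h\times\bfn)\,(a_h\bfp_h)^\tau\dd s=0$ because $\bfv_h\times\bfn=\mathbf{0}$; in particular, taking $a_h=\beta_h^{-1}$ and letting $\bfp_h$ range over $\bfP^f_0(\beta_h^{-1};K)$ gives $\int_K\curl\bfv_h\cdot\bfq_h\dd\bfx=0$ for all $\bfq_h\in\beta_h^{-1}\bfP^f_0(\beta_h^{-1};K)$. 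At the same time, the defining constraint of $\bfV^e_h(K)$ gives $\int_K\curl\bfv_h\cdot\bfw_h\dd\bfx=0$ for all $\bfw_h\in\bfS^e_h(\beta^{-1}_h,0;K)$, and since $\bfS^e_h(\beta^{-1}_h,0;K)=\{\bfa\times(\bfx-\bfx_K):\bfa\in\bfP^f_0(\beta^{-1}_h;K)\}$ this kills $\curl\bfv_h$ tested against functions of the form $\bfa\times(\bfx-\bfx_K)$. Now $\curl\bfv_h\in\bfS^f_h(\beta^{-1}_h;K)$ by the very definition of $\widetilde{\bfV}^e_h(K)$ (indeed $\curl(\alpha\curl\bfv_h)\in\bfP^f_0(\beta_h^{-1};K)$ and one checks $\curl\bfv_h$ lies in the face IFE space, using that $\div(\beta\bfv_h)=0$), and every element of $\bfS^f_h(\beta^{-1}_h;K)$ has the form $c(\bfx-\bfx_K)+\bfa$ with $c\in\mathcal{P}_0(K)$, $\bfa\in\bfP^f_0(\beta^{-1}_h;K)$; testing against the appropriate spanning set from the two families of constraints forces $c=0$ and $\bfa=\mathbf{0}$, hence $\curl\bfv_h=\mathbf{0}$.

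Once $\curl\bfv_h=\mathbf{0}$, I conclude exactly as in Lemma~\ref{lem_Hcurl_IVE_DoF}: $\bfv_h\in\bfH(\curl;K)$ satisfies $\curl\bfv_h=\mathbf{0}$, $\div(\beta\bfv_h)=0$ in $K$, and $\bfv_h^\tau=\mathbf{0}$ on $\partial K$, so Lemma~\ref{lem_divcurl2} (applied with $a=\beta$, zero data, zero boundary trace) yields $\bfv_h=\mathbf{0}$. This proves unisolvency with respect to the edge DoFs and, by the dimension bookkeeping above, that $\dim\bfV^e_h(K)=|\mathcal{E}_K|$.

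The main obstacle I anticipate is the middle step: verifying that $\curl\bfv_h$ actually lies in $\bfS^f_h(\beta^{-1}_h;K)$ (not merely in $\bfH(\div;K)$ with divergence-free and a $\bfP^f_0$-valued curl), so that its decomposition $c(\bfx-\bfx_K)+\bfa$ is available and the two families of vanishing integrals pin down both the polynomial part $c$ and the piecewise-constant part $\bfa$. This requires tracking the jump conditions carried by $\widetilde{\bfV}^e_h(K)$ through the $\curl$ operator and matching them against the jump conditions defining $\bfS^f_h(\beta^{-1}_h;K)$ in Table~\ref{tab_IFE_func}; the rest is the same compactness/positivity bookkeeping already used for $\widetilde{\bfV}^e_h(K)$. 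Everything else—the dimension count and the final appeal to Lemma~\ref{lem_divcurl2}—is routine.
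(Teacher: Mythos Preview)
There is a genuine gap at the step you flag as the main obstacle, and it is not one that can be closed by ``tracking jump conditions'': the assertion $\curl\bfv_h\in\bfS^f_h(\beta^{-1}_h;K)$ is simply false. The space $\bfS^f_h(\beta^{-1}_h;K)$ consists of explicit piecewise polynomials $c(\bfx-\bfx_K)+\bfa$, whereas $\curl\bfv_h$ is the curl of a \emph{virtual} function and is itself only determined as the solution of an interface $\div$--$\curl$ system (namely $\curl(\alpha\bfpsi)\in\bfP^f_0(\beta_h^{-1};K)$, $\div\bfpsi=0$, $\bfpsi\cdot\bfn$ prescribed). For discontinuous $\alpha$ this solution is not a piecewise polynomial, so $\curl\bfv_h$ lives in an infinite-dimensional space and the six orthogonality relations you assembled cannot force it to vanish.

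The paper's proof avoids analyzing $\curl\bfv_h$ altogether. It integrates by parts on the defining constraint itself: for $\bfw_h=\bfa\times(\bfx-\bfx_K)\in\bfS^e_h(\beta^{-1}_h,0;K)$ one has $\curl\bfw_h=2\bfa\in\bfP^f_0(\beta^{-1}_h;K)=\beta_h\bfP^e_0(\beta_h;K)$, and since $\bfv_h^\tau=\mathbf{0}$,
\[
0=\int_K\curl\bfv_h\cdot\bfw_h\,\dd\bfx
=\int_K\bfv_h\cdot\curl\bfw_h\,\dd\bfx-\int_{\partial K}\bfv_h^\tau\cdot(\bfw_h\times\bfn)\,\dd s
=2\int_K\bfv_h\cdot\bfa\,\dd\bfx.
\]
Letting $\bfa$ range over $\bfP^f_0(\beta^{-1}_h;K)$ shows that the \emph{interior} DoFs $\int_K\beta_h\bfv_h\cdot\bfp_h\,\dd\bfx$, $\bfp_h\in\bfP^e_0(\beta_h;K)$, all vanish. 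Now both the edge and the interior DoFs of $\widetilde{\bfV}^e_h(K)$ are zero, so Lemma~\ref{lem_Hcurl_IVE_DoF} gives $\bfv_h=\mathbf{0}$ directly---one never needs to identify $\curl\bfv_h$. Your dimension-count framing is fine; just replace the detour through $\bfS^f_h$ by this one-line integration by parts back to the interior moments.
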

\begin{proof}
It suffices to show that this extra condition \eqref{Hcurl_IVE_local_2_new}, in fact, makes the interior DoFs $\int_K\beta_h \bfv_h\cdot\bfp_h \dd \bfx$, $ \bfp_h\in  \bfP^e_0(\beta_h;K)$ fixed thus not degrees of freedom anymore. To see this, for each $\bfp_h\in \bfP^e_0(\beta_h;K)$, we have $\beta_h\bfp_h\in\bfP^f_0(\beta^{-1}_h;K)$ \LC{and $\div \beta_h\bfp_h = 0$}. Therefore, by the local exact sequence \eqref{local_DR_IFE} and \eqref{AB_HodgeStar1}, there exists $\bfw_h\in \bfS^e_h(\beta^{-1}_h,0;K)$
such that $\curl \bfw_h = \beta_h \bfp_h$. Then, the integration by parts shows
\begin{equation}
\label{Hcurl_IVE_local_4}
\int_K \beta_h \bfv_h\cdot  \bfp_h \dd \bfx = \int_K \bfv_h\cdot  \curl\bfw_h \dd \bfx =  \underbrace{ \int_K   \curl \bfv_h\cdot \bfw_h \dd \bfx }_{({\rm I})} -  \underbrace{ \int_{\partial K} \bfv^{\tau}_h\cdot (\bfw_h\times\bfn) \dd s }_{({\rm II})}.
\end{equation}
Note that $({\rm I}) =0$ by the extra condition in the definition. 
For $({\rm II})$, $\bfv^{\tau}_h\in \mathcal{B}^e_h(\partial K)$ is also solely determined by the edge DoFs. Therefore, under the extra constraint, the right-hand side of \eqref{Hcurl_IVE_local_4} is computable for every pair of $\bfp_h$ and $\bfw_h$, as long as the edge DoFs $\{\int_e \bfv_h\cdot\bft \dd s$, $e\in\mathcal{E}_K\}$, are given.
\end{proof}

For the new space $\bfV^e_h(K)$, the identity \eqref{Hcurl_IVE_local_4} also gives a simple formula for computing $\bfPi^{e,\beta_h}_K\bfv_h$:
\begin{equation}
\begin{split}
\label{B_proj_curl_new}
\int_K  \beta_h \bfPi^{e,\beta_h}_K \bfv_h \cdot  \bfp_h \dd \bfx  %
= \int_K  \beta_h \bfv_h \cdot  \bfp_h \dd \bfx
 =  - \int_{\partial K} \bfv^{\tau}_h\cdot (\bfw_h\times\bfn) \dd s ,
\end{split}
\end{equation}
with $\bfw_h=(\beta_h\bfp_h/2)\times(\bfx-\bfx_K)$ given by Table \ref{tab_IFE_func}.%

With the edge DoFs, we are able to define a global $\bfH(\curl)$-conforming space that uses \eqref{Hcurl_IVE_local_2} on interface elements and standard N\'ed\'elec elements on non-interface elements:
\begin{equation}
\label{Hcurl_IVE_global}
\bfV^e_h = \{ \bfv_h \in \bfH(\curl): \bfv_h|_K\in \bfV^e_h(K), ~ K\in \mathcal{T}^i_h ~ \text{and} ~ \bfv_h|_K\in \mathcal{ND}_0(K), ~ K\in \mathcal{T}^n_h \}.
\end{equation}

\subsection{The $\bfH(\div)$ IVE Space}

For the $\bfH(\div)$ case, the boundary function space is defined as
\RG{\begin{equation}
\label{Hdiv_VEM_B}
\mathcal{B}^f_h(\partial K) = \{ v_h : ~  v_h|_T \in \mathcal{P}_0(T), ~  \forall T\in\mathcal{F}_K \}.
\end{equation}}
Similar to the $\bfH(\curl)$ space, let us define the IVE space in the following:
\begin{equation}
\begin{split}
\label{Hdiv_IVE_local}
\bfV^f_h(K) = \{ \bfv_h\in H(\div;K): & \div(\bfv_h)\in \mathcal{P}_0(K), ~~ \bfv_h\cdot\bfn |_{\partial K} \in \mathcal{B}^f_h(\partial K) , \\
&  ~ \alpha\bfv_h\in \bfH(\curl;K), ~ \curl(\alpha\bfv_h) = \mathbf{ 0} \}.
\end{split}
\end{equation}
Again, the discontinuous coefficient $\alpha$ serves as a Hodge star operator that is to mimic the third vertical mapping in Diagram \ref{Hodge_H_local}. 
The well-posedness and DoFs of this space are given by the following lemma.
\begin{lemma}
\label{lem_Hdiv_IVE_DoF}
$\bfV^f_h(K)$ is unisolvent with respect to the DoFs $\{\bfv_h|_F\cdot\bfn_F $, $F\in\mathcal{F}_K\}$.
\end{lemma}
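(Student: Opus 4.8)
## Proof Proposal for Lemma \ref{lem_Hdiv_IVE_DoF}

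The plan is to follow the same two-part strategy used for the $\bfH(\curl)$ space in Lemma \ref{lem_Hcurl_IVE_DoF}: first show that the dimension of $\bfV^f_h(K)$ equals the cardinality $|\mathcal{F}_K|$ of the proposed set of face DoFs, and then show that a function in $\bfV^f_h(K)$ with all face normal moments $\bfv_h|_F\cdot\bfn_F$ vanishing must be identically zero. For the dimension count, I would first recast the defining conditions of $\bfV^f_h(K)$ as a local boundary value problem. Given boundary data $r_h\in\mathcal{B}^f_h(\partial K)$ (piecewise constant normal traces on $\mathcal{F}_K$) and a target divergence $c\in\mathcal{P}_0(K)$ subject to the compatibility constraint $\int_K c\dd\bfx=\int_{\partial K} r_h\dd s$, the pair $(\bfv_h$ with $\curl(\alpha\bfv_h)=\mathbf{0})$ is exactly the solution set of
\begin{equation*}
\curl(\alpha\bfv_h)=\mathbf{0},\qquad \div(\bfv_h)=c\ \text{ in }K,\qquad \bfv_h\cdot\bfn=r_h\ \text{ on }\partial K.
\end{equation*}
This is precisely a $\div$-$\curl$ system of the type in Lemma \ref{lem_divcurl} with $b=\alpha$, zero right-hand side for the $\curl$ equation, source $h=c$, and normal boundary data $g=r_h$; since $K$ is a simply-connected Lipschitz polyhedron, that lemma gives existence and uniqueness of $\bfv_h$ for each admissible $(c,r_h)$. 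Hence $\dim\bfV^f_h(K)$ equals the dimension of the admissible data space. The normal trace data $r_h\in\mathcal{B}^f_h(\partial K)$ has dimension $|\mathcal{F}_K|$ and $c\in\mathcal{P}_0(K)$ contributes one more dimension, but the single scalar compatibility condition removes one, so the data space — and therefore $\bfV^f_h(K)$ — has dimension $|\mathcal{F}_K|$, matching the DoF count.

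For unisolvency it then remains to show that $\bfv_h\in\bfV^f_h(K)$ with $\bfv_h|_F\cdot\bfn_F=0$ for all $F\in\mathcal{F}_K$ is zero. Vanishing of all face normal moments forces the boundary data $r_h\equiv 0$, hence $\bfv_h\cdot\bfn=0$ on $\partial K$; and then the compatibility condition forces $c=\div\bfv_h=0$ as well (integrate $\div\bfv_h$ over $K$ and use the divergence theorem). So $\bfv_h$ satisfies $\curl(\alpha\bfv_h)=\mathbf{0}$, $\div\bfv_h=0$ in $K$, and $\bfv_h\cdot\bfn=0$ on $\partial K$ — which is the homogeneous version of the system above, and uniqueness in Lemma \ref{lem_divcurl} gives $\bfv_h=\mathbf{0}$. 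Alternatively, one can argue by a direct energy identity: from $\curl(\alpha\bfv_h)=\mathbf{0}$ and simple-connectedness one writes $\alpha\bfv_h=\nabla p$ for some $p\in H^1(\cup K^\pm)$ (with $p$ continuous across $\Gamma^K$), then $\int_K \alpha^{-1}|\nabla p|^2 = \int_K \bfv_h\cdot\nabla p = -\int_K p\,\div\bfv_h + \int_{\partial K} p(\bfv_h\cdot\bfn) = 0$ using $\div\bfv_h=0$ and $\bfv_h\cdot\bfn=0$, so $\nabla p=\mathbf{0}$ and $\bfv_h=\mathbf{0}$.

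The main obstacle I anticipate is the careful bookkeeping in the dimension count — in particular verifying that $\mathcal{B}^f_h(\partial K)$ really is ``free'' of dimension $|\mathcal{F}_K|$ (piecewise constants on triangular faces carry no continuity constraint, unlike the $\bfH(\curl)$ boundary space which needed tangential continuity along edges), and confirming that the lone compatibility condition $\int_K c\dd\bfx=\int_{\partial K}r_h\dd s$ is the only constraint coupling $c$ and $r_h$. A secondary point requiring care is that Lemma \ref{lem_divcurl} is stated for data $\bff\in\bfH(\div;\omega)\cap\ker(\div)$ and $h\in L^2(\omega)$, $g\in H^{-1/2}(\partial\omega)$; here $\bff=\mathbf{0}$, $h=c\in\mathcal{P}_0(K)\subset L^2(K)$, and $g=r_h\in\mathcal{B}^f_h(\partial K)\subset L^2(\partial K)$, so all hypotheses are comfortably met, and the ``$a\bfvarphi\in\bfH(\curl)$'' conclusion of that lemma with $a=\alpha$ supplies exactly the $\alpha\bfv_h\in\bfH(\curl;K)$ regularity demanded in the definition \eqref{Hdiv_IVE_local}. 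No maximum-angle or interface-location-robustness issues enter this particular lemma; those are deferred to the later approximation analysis.
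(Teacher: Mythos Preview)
Your proposal is correct and follows essentially the same approach as the paper: recast the definition of $\bfV^f_h(K)$ as a $\div$--$\curl$ system with data $(c,r_h)$ subject to the single compatibility constraint, invoke Lemma~\ref{lem_divcurl} for well-posedness, count dimensions as $|\mathcal{F}_K|+1-1=|\mathcal{F}_K|$, and conclude unisolvency by observing that vanishing face DoFs force homogeneous data and hence $\bfv_h=\mathbf{0}$. Your additional energy-identity alternative and the explicit check that the data lie in the hypotheses of Lemma~\ref{lem_divcurl} are correct extra details the paper omits.
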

\begin{proof}
We let $c_h\in \mathcal{P}_0(K)$ and $r_h\in \mathcal{B}^f_h(\partial K)$ be some arbitrary data functions for the definition \eqref{Hdiv_IVE_local} satisfying the compatibility condition
\begin{equation}
\label{lem_Hdiv_IVE_DoF_eq1}
\int_K c_h \dd x = \int_{\partial K} r_h \dd s.
\end{equation}
Then, we consider the following $\div$-$\curl$ system: 
\begin{equation}
\begin{split}
\label{lem_Hdiv_IVE_DoF_eq2}
\div(\bfv_h) = c_h, ~~~ \curl (\alpha\bfv_h) = \mathbf{ 0} ~ ~ \text{in} ~ K, ~~~ \text{and} ~~~ \bfv_h\cdot\bfn = r_h~ \text{on} ~ \partial K
\end{split}
\end{equation}
whose solutions $\bfv_h$ form the space ${\bfV}^f_h(K)$.
Due to the compatibility condition \eqref{lem_Hdiv_IVE_DoF_eq1}, by Lemma \ref{lem_divcurl} the system in \eqref{lem_Hdiv_IVE_DoF_eq2} is well-posed 
and admits a unique solution $\bfv_h\in \bfH(\div;K)$ with $\alpha \bfv_h\in \bfH(\curl;K)$. 
The dimension of ${\bfV}^f_h(K)$ is just the dimension of space of the independent data functions, $|\mathcal{F}_K|$, 
where we note that $c_h$ does not count as it is determined by $r_h$ from \eqref{lem_Hdiv_IVE_DoF_eq1}.

Next, to show that the given moments are indeed DoFs, we suppose they all vanish, and \LC{ thus $c_h=r_h=0$ in \eqref{lem_Hdiv_IVE_DoF_eq2}} which immediately implies $\bfv_h=\mathbf{ 0}$ by Lemma \ref{lem_divcurl}.
\end{proof}

Now, we discuss \RG{how to compute projections of the proposed $\bfH(\div)$ IVE space}. 
The identity \eqref{lem_Hdiv_IVE_DoF_eq1}, in fact, yields a formula for computing $\div(\bfv_h)$:
\begin{equation}
\label{Hdiv_proj_1}
\div(\bfv_h) = |K|^{-1} \int_{\partial K} \bfv_h\cdot\bfn \dd s
\end{equation}
which is computable. As for $\bfv_h$ itself, we then argue that $\bfPi^{f,a_h}_K\bfv_h$ defined in \eqref{A_proj_1} is always computable for any given positive $a_h$. 
Similar to \eqref{A_proj_2}, given each $\bfp_h\in\bfP^f_0(a_h;K)$, we have $a_h\bfp_h\in \bfP^e_0(a^{-1}_h;K)$. 
Then, there exists $\psi_h=(a_h\bfp_h)\cdot(\bfx - \bfx_K)\in S^n_h(a^{-1}_h;K)$ such that $\nabla \psi_h = a_h\bfp_h$, and thus we can derive
\begin{equation}
\begin{aligned}
\label{Hdiv_proj_2}
 \int_K a_h \bfPi^{f,a_h}_K \bfv_h\cdot \bfp_h \dd \bfx  &=  \int_K a_h  \bfv_h\cdot \bfp_h \dd \bfx = \int_K   \bfv_h\cdot \nabla \psi_h \dd \bfx 
\\
  &= - \int_K\div(\bfv_h)\psi_h \dd \bfx + \int_{\partial K}   \bfv_h\cdot\bfn \psi_h \dd s
\end{aligned}
\end{equation}
which is computable. %
In computation, $a_h$ in this space is set to $\alpha_h$ \LC{as $\alpha_h \bfp_h\in \bfH(\curl)$}.

Thanks to the face DoFs, we can define a global $\bfH(\div)$-conforming space:
\begin{equation}
\label{Hdiv_IVE_global}
\bfV^f_h = \{ \bfv_h\in \bfH(\div;\Omega): \bfv_h\in\bfV^f_h(K), ~ K\in \mathcal{T}^i_h ~ \text{and} ~ \bfv_h|_K\in \mathcal{RT}_0(K), ~ K\in \mathcal{T}^n_h \}.
\end{equation}

\subsection{Some Comments and Alternative Definitions}
\label{subsec:comment}

The proposed $\bfH(\curl)$ and $\bfH(\div)$ IVE spaces above are exactly the extension of the classical virtual spaces in literature~
\cite{Beirao-da-Veiga;Brezzi;Cangiani;Manzini:2013principles,2017VeigaBrezziDassiMarini,2016VeigaBrezziMarini,BeiraodaVeigaBrezziDassiEtAl2018Lowest,BEIRAODAVEIGA2021} to the case of discontinuous coefficients. 
The modification also includes the source terms and the boundary conditions of the local interface problems, by which the weighted projections are computable. 

\RG{The face triangulation and the associated standard FE space is critical for the 
appropriate definition of the IVE spaces. Note that the 2D IVE spaces in Ref.
\refcite{2021CaoChenGuoIVEM} may not be developed on faces intersection with the interface, 
as jump conditions on faces are quite obscure. Take the $H^1$ interface problem as an example.
Given a face $F$, the desired jump information $[\beta\nabla_Fv\cdot\hat{\mathbf{n}}_{\Gamma,F}]_{\Gamma}$ 
cannot be derived from $[v]_{\Gamma}$ and $[\beta\nabla v\cdot\mathbf{n}]_{\Gamma}$, where $\nabla_F$ is the surface 
gradient on $F$ and $\hat{\mathbf{n}}_{\Gamma,F}$ is the normal vector to $\Gamma\cap F$ but parallel to $F$.
We refer readers to the derivation of the jump conditions on interface edges in the 2D case~\cite{2010ChuGrahamHou} 
that has to introduce the derivative along the normal direction of element boundary and thus adds much more complexity. 
Instead, we use well-defined finite element spaces for an interface-fitted triangulation on the boundary faces, 
which actually makes the theory and computation much simpler.
In addition, this approach can also provide sufficient approximation capabilities and keep the DoFs. 
Furthermore, in the next section, we shall see that it can also benefit implementation through the proposed data structure.}

\RG{One may note that the definition of the IVE spaces above do not rely on the assumption that the interface only cuts elements once. 
In fact, all the local problems are automatically well-posed for almost arbitrary interface element configuration, as long as the face triangulation exists. 
For example, they can be used on elements shown in the right plot of Figure \ref{fig:face_mesh} (a 2D illustration). 
This very feature together with the IFE spaces in Appendix \ref{sec:appen} makes the proposed method much more flexible than the traditional IFE methods in the literature.}

Next, let us summarize the relationship between the involved spaces and weights in the computation of projections, which may be unified as 
\begin{equation}
\begin{split}
\label{unifyProj}
&\int_K c_h \bfPi^{s,c_h}_K \bfv_h\cdot \bfp_h \dd \bfx = \int_K  \bfv_h \cdot \underbrace{(c_h \bfp_h)}_{\in \bfP^{s'}_0(c^{-1}_h;K)} \dd \bfx,\\
 & \text{where} ~ 
\begin{cases}
      &  (s,s',c_h) = (e,f,\beta_h), ~ \text{if} ~ (\bfv_h,\bfp_h)\in\bfH(\curl;K)\times\bfP^e_0(c_h;K), \\
      &  (s,s',c_h) = (f,e,\alpha_h), ~ \text{if} ~ (\bfv_h,\bfp_h)\in\bfH(\div;K)\times\bfP^f_0(c_h;K).
\end{cases}
\end{split}
\end{equation}
By the language of differential forms, in order for the wedge product of a $k$-form and $l$-form to be scalar, there needs $k+l=3$ in the 3D case.
Note that $\bfv_h$ and $\bfp_h$ both belong to the $k$-form, $k=1,2$, so $c_h$ acts as a Hodge star operator \eqref{AB_HodgeStar1} mapping $\bfp_h$ to the $(3-k)$-form for the desired wedge product. 
Here, the value of $c_h=\alpha_h$ or $\beta_h$ depends on the $\bfH(\curl)$ or $\bfH(\div)$ spaces matching the underlying Maxwell's equations.

At last, we provide an alternative definition of the $\bfH(\curl)$ IVE spaces being a different subspace of $\widetilde{\bfV}^e_h(K)$, 
which has some nice mathematical properties. The key is also to impose suitable conditions to assign the interior DoFs.

For the $\bfH(\curl)$ space, we may consider
\begin{equation}
\begin{split}
\label{Hcurl_IVE_local_2}
\bfV^e_h(K) = \{ \bfv_h \in \widetilde{\bfV}^e_h(K): &\int_K  \curl \bfv_h\cdot \bfw_h \dd \bfx = \int_K  \bfPi^{f,\beta_h}_K \curl \bfv_h\cdot \bfw_h \dd \bfx, \\
& \forall \bfw_h \in \bfS^e_h(\beta^{-1}_h,0;K) \}.
\end{split}
\end{equation}
Then, the interior DoFs can be determined also through integration by parts:
\begin{equation}
\begin{split}
\label{B_proj_curl}
\int_K  \beta_h \bfv_h \cdot  \bfp_h \dd \bfx  %
& = \int_K  \curl \bfv_h\cdot \bfw_h \dd \bfx - \int_{\partial K} \bfv^{\tau}_h\cdot (\bfw_h\times\bfn) \dd s  \\
& = \int_K \bfPi^{f,\beta_h}_K  \curl \bfv_h\cdot \bfw_h \dd \bfx - \int_{\partial K} \bfv^{\tau}_h\cdot (\bfw_h\times\bfn) \dd s, ~~~ \forall \bfp_h\in \bfP^e_0(\beta_h;K),
\end{split}
\end{equation}
where $\bfw_h=(\beta_h\bfp_h/2)\times(\bfx-\bfx_K)$ from Table \ref{tab_IFE_func} 
makes the space $\bfV^e_h(K)$ only have the edge DoFs. 
The identity \eqref{B_proj_curl} also gives the formula for computing $\bfPi^{e,\beta_h}_K\bfv_h$. 
But, compared with \eqref{B_proj_curl_new}, \eqref{B_proj_curl} needs to compute the extra term $\int_K \bfPi^{f,\beta_h}_K  \curl \bfv_h\cdot \bfw_h \dd \bfx$, which is slightly more expensive.

This approach to determine the subspaces is similar to the one in~Ref. \refcite{2013AhmadAlsaediBrezziMariniRusso,2018BrennerSung} for the classical $H^1$ virtual spaces. 
Here, the benefit is to have the new spaces free of the choice of $\bfx_K$. 
Note that the spaces in \eqref{Hcurl_IVE_local_2_new} 
depends on the choice of the point $\bfx_K\in \Gamma^K_h$ which can be arbitrary on the plane $\Gamma^K_h$ with a distance $\mathcal{O}(h_K)$ to the element $K$. 
However, the new space in \eqref{Hcurl_IVE_local_2} 
is invariant with respect to the various $\bfx_{K}\in\Gamma^K_h$, even though the underlying IFE spaces $\bfS^e_h(\beta^{-1}_h,0;K)$ are not.

\subsection{A discrete de Rham Complex}\label{sec:discretedeRham}
The proposed IVE spaces inherit the de Rham complex properties of standard finite element spaces including the exact sequence and commutativity. %

Thanks to the nodal, edge and face DoFs of the proposed IVE spaces, let us first define the corresponding interpolations:
\begin{subequations}
\label{interpolation}
\begin{align}
    & I^n_h: H^2(\beta;\mathcal{T}_h) \rightarrow V^n_h ~~~~~~~~~~~~ \text{satisfying} ~~ I^n_hu(\bfx) = u(\bfx), ~~~~ \forall \bfx \in \mathcal{N}_h, \label{nodal_interp}  \\
    & I^e_h: \bfH^1(\curl,\alpha,\beta;\mathcal{T}_h) \rightarrow \bfV^e_h ~~~ \text{satisfying} ~~  \int_e I^e_h\bfu\cdot\bft \dd s = \int_e \bfu\cdot\bft \dd s, ~~~ \forall e\in \mathcal{E}_h,  \label{Hcurl_interp} \\
    & I^f_h: \bfH^1(\div,\alpha;\mathcal{T}_h) \rightarrow \bfV^f_h ~~~~~~~ \text{satisfying} ~~ \int_F I^f_h\bfu\cdot\bfn \dd s = \int_F \bfu\cdot\bfn \dd s, ~~~ \forall F\in \mathcal{F}_h.  \label{Hdiv_interp} 
\end{align}
\end{subequations}
We further need the standard $L^2$ projection denoted by $\Pi^0_K:L^2(K)\rightarrow \mathcal{P}_0(K)$, 
and define the global one as $\Pi^0_h$ such that $\Pi^0_h|_T = \Pi^0_T$, i.e., $\Pi^0_h: L^2(\Omega)\rightarrow Q_h$ where
\begin{equation}
\label{Qh}
Q_h = \{ v_h: v_h\in \mathcal{P}_0(K), ~ \forall K\in\mathcal{T}_h(K) \}.
\end{equation}
These operators together with the IVE spaces will be used to formulate the continuous and discrete de Rham complex in \eqref{IVE_deRham}. 
In fact, Lemma \ref{lem_IVE_deRham_H_complex} already shows that exactness in the continuous level. 
So our focus will be on the discrete one in the lower part of \eqref{IVE_deRham}.

\begin{lemma}
\label{lem_IVE_complex}
When $\Omega$ is topologically trivial, the following complex is exact
\begin{equation}
   \label{deRham}
\begin{tikzcd}[column sep=scriptsize]
  \mathbb{R} \arrow[r, "\hookrightarrow"] 
   & V^n_h \arrow[r, "\grad"] 
   &[1em]  \bfV^e_h \arrow[r, "\curl"] 
   &[1em]  \bfV^{f}_h \arrow[r, "\div"] 
   &[1em]  Q_h \rar 
   & 0.
\end{tikzcd}
\end{equation}
\end{lemma}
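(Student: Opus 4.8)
The plan is to prove the two properties that constitute exactness of \eqref{deRham}: the complex property (image $\subseteq$ kernel at each spot) and the reverse inclusion (kernel $\subseteq$ image), taking full advantage of the interpolation operators $I^n_h, I^e_h, I^f_h, \Pi^0_h$ and the commuting diagram \eqref{IVE_deRham}, together with the continuous exactness already established in Lemma \ref{lem_IVE_deRham_H_complex} and the local exact sequences of Lemma \ref{lem_loc_exact}. First I would check the complex property: $\grad V^n_h \subseteq \bfV^e_h \cap \ker(\curl)$ follows since a function in $V^n_h$ restricted to each interface element $K$ lies in $V^n_h(K)$, hence $\beta\nabla v_h \in \bfH(\div;K)$ makes $\nabla v_h \in \bfH^1(\curl,\alpha,\beta;K)$-type space, and $\curl\grad = 0$; moreover $\nabla v_h$ has zero $\curl$ so it sits in the space $\bfS^e_h(\beta_h^{-1},0;K)$-orthogonality defining $\bfV^e_h(K)$ trivially; on non-interface elements $\grad\mathcal{P}_1(K)\subset\mathcal{ND}_0(K)$ and the tangential traces match. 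Likewise $\curl\bfV^e_h\subseteq\bfV^f_h\cap\ker(\div)$: on $K$ one has $\curl\bfv_h\in\bfH(\div;K)$, $\div\curl=0$, and $\curl(\alpha\cdot\text{stuff})$ conditions are exactly those defining $\bfV^f_h(K)$ after identifying $\alpha\curl\bfv_h$ with the relevant form — here I would lean on the fact that $\bfV^e_h(K)$ was built so $\curl\alpha\curl\bfv_h\in\bfP^f_0(\beta_h^{-1};K)$ and $\div(\beta\bfv_h)=0$; on non-interface elements $\curl\mathcal{ND}_0(K)\subseteq\mathcal{RT}_0(K)$. Finally $\div\bfV^f_h\subseteq Q_h$ is immediate since $\div\bfv_h\in\mathcal P_0(K)$ on every element by construction.

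Next I would establish exactness at each node by a dimension count combined with the inclusions just proved, which is the standard VEM-style argument. Assume $\Omega$ is topologically trivial. At $V^n_h$: if $v_h\in V^n_h$ with $\grad v_h=0$ then $v_h$ is globally constant, giving exactness at the $\mathbb R$ end. At $\bfV^e_h$: take $\bfv_h\in\bfV^e_h$ with $\curl\bfv_h=0$. By the continuous exactness (Lemma \ref{lem_IVE_deRham_H_complex}) applied to the Sobolev spaces — noting $\bfV^e_h\subset\bfH^1(\curl,\alpha,\beta;\mathcal T_h)$ — there is $u\in H^2(\beta;\mathcal T_h)$ with $\nabla u=\bfv_h$; then use the commuting property $\grad I^n_h = I^e_h\grad$ (which I would verify via the DoFs: the edge moments of $\grad(I^n_h u)$ equal nodal differences of $I^n_h u = $ nodal values of $u$, equal edge moments of $\nabla u = \bfv_h$) to get $\grad I^n_h u = I^e_h\nabla u = I^e_h\bfv_h = \bfv_h$, since $\bfv_h$ is already in $\bfV^e_h$ and $I^e_h$ fixes it. Hence $\bfv_h\in\grad V^n_h$. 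The same pattern handles $\bfV^f_h$: given $\bfv_h\in\bfV^f_h$ with $\div\bfv_h=0$, continuous exactness gives $\bfw\in\bfH^1(\curl,\alpha,\beta;\mathcal T_h)$ with $\curl\bfw=\bfv_h$, and the commuting relation $\curl I^e_h = I^f_h\curl$ yields $\curl I^e_h\bfw = I^f_h\bfv_h = \bfv_h$. Surjectivity of $\div:\bfV^f_h\to Q_h$ follows from $\div I^f_h = \Pi^0_h\div$ together with continuous surjectivity $\div\bfH^1(\div,\alpha;\mathcal T_h)=H^1(\mathcal T_h)$ and the density of $H^1(\mathcal T_h)$-divergences producing any piecewise constant after $\Pi^0_h$; more directly, given $q_h\in Q_h$ pick any $\bfw\in\bfH^1(\div,\alpha;\mathcal T_h)$ with $\div\bfw=\tilde q$ for a smooth $\tilde q$ with $\Pi^0_h\tilde q=q_h$, then $\div I^f_h\bfw=\Pi^0_h\div\bfw=q_h$.

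The main obstacle I anticipate is verifying the commuting properties $\grad I^n_h=I^e_h\grad$, $\curl I^e_h=I^f_h\curl$, and $\div I^f_h=\Pi^0_h\div$ rigorously at the level of the DoFs, because the IVE interpolants are defined implicitly through local boundary-value problems with discontinuous coefficients rather than by explicit formulas. The delicate point is that $I^e_h$ is unisolvent only on the \emph{subspace} $\bfV^e_h(K)$ cut out by the extra orthogonality constraint \eqref{Hcurl_IVE_local_2_new}, so one must check that $\grad(I^n_h u)$ actually lands in $\bfV^e_h(K)$ — which holds because gradients are curl-free and the constraint reads $\int_K\curl\bfv_h\cdot\bfw_h=0$, automatically satisfied — and similarly that $\curl(I^e_h\bfw)$ lands in $\bfV^f_h(K)$, i.e.\ has constant divergence and curl-free image under $\alpha$, which is built into the definition of $\bfV^e_h(K)$ via $\curl\alpha\curl\bfv_h\in\bfP^f_0(\beta_h^{-1};K)$. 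Once these membership checks are in place, matching the DoFs is a short integration-by-parts computation on $\partial K$ exactly as in \eqref{H1_proj1}, \eqref{A_proj_2}, \eqref{Hdiv_proj_1}. I would also note the topological triviality of $\Omega$ is used precisely to invoke the continuous exact sequence and to rule out nontrivial harmonic forms in the kernel-into-image steps; on non-interface elements everything reduces to the classical finite element de Rham complex for $\mathcal P_1$–$\mathcal{ND}_0$–$\mathcal{RT}_0$–$\mathcal P_0$, so no new work is needed there.
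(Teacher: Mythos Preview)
Your overall strategy—transferring continuous exactness (Lemma~\ref{lem_IVE_deRham_H_complex}) to the discrete level via the interpolation operators and the commuting diagram—is natural, but it has a genuine gap at the node $\bfV^f_h$. You assert $\bfV^e_h\subset\bfH^1(\curl,\alpha,\beta;\mathcal T_h)$ and, implicitly, $\bfV^f_h\subset\bfH^1(\div,\alpha;\mathcal T_h)$. These inclusions are \emph{not} available: the spaces on the right demand piecewise $\bfH^1$ regularity on each $K^{\pm}$, but on an interface element the surface $\Gamma^K$ meets $\partial K$, so the local problems defining the IVE functions have an interior interface running into the element boundary. The regularity statement in Lemma~\ref{lem_divcurl} explicitly requires that the interface not touch $\partial\omega$; without it you cannot conclude $\bfv_h\in\bfH^1(K^{\pm})$. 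Consequently Lemma~\ref{lem_IVE_deRham_H_complex} cannot be invoked to manufacture a potential $\bfw\in\bfH^1(\curl,\alpha,\beta;\mathcal T_h)$ with $\curl\bfw=\bfv_h$. Falling back to the ordinary $\bfH(\curl)$ sequence gives only $\bfw\in\bfH(\curl;\Omega)$, which in 3D is not regular enough for the edge integrals defining $I^e_h\bfw$ to make sense.

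The paper sidesteps this regularity obstruction by a dimension count. Exactness at $\bfV^e_h$ is obtained by directly verifying that the $H^1(\Omega)$ potential of a curl-free $\bfv_h\in\bfV^e_h$ already lies in $V^n_h$ (its trace on each boundary triangle is forced to be affine because $\bfv_h^{\tau}|_F\in\mathcal{ND}_0(F)$ with $\rot_F\bfv_h=0$)—which is essentially what your argument collapses to once the spurious regularity claim is removed. Surjectivity of $\div$ is handled as you propose. For the remaining node, rather than lifting, the paper uses Euler's formula $\#\mathcal V_h-\#\mathcal E_h+\#\mathcal F_h-\#\mathcal T_h=1$ to match $\dim\curl\bfV^e_h=\#\mathcal E_h-\#\mathcal V_h+1$ with $\dim(\bfV^f_h\cap\ker(\div))=\#\mathcal F_h-\#\mathcal T_h$; the inclusion then forces equality. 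This counting step is precisely what replaces the unavailable regularity in your plan.
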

\begin{proof}
The argument for showing the sequence being a complex is basically the same as Lemma \ref{lem_IVE_deRham_H_complex}. 
To show the exactness, we can look at the DoFs which form a co-chain exact complex on the cell-complex defined by the mesh. 
For the completeness, we include a detailed proof below. 

First verify $\nabla V^n_h = \ker(\curl)\cap\bfV^e_h$. 
By the classic exact sequence, given each $\bfv_h \in \ker(\curl)\cap\bfV^e_h$, there exists $v_h\in H^1(\Omega)$ such that $\nabla v_h= \bfv_h$. 
Given an interface element $K$, the jump conditions associated with $\bfV^e_h(K)$ imply that $v_h$ also satisfies those of $V^n_h(K)$. 
In addition, let $F$ be one of its face in the boundary triangulation, since $\rot_F \bfv_h = 0$ and $\bfv_h^{\tau}|_F\in\mathcal{ND}_0(F)$, 
we have $\nabla_F v_h = \bfv_h^{\tau}|_F \in [\mathcal{P}_0(F)]^2$, which implies $v_h\in \mathcal{P}_1(F)$. Hence, $v_h\in V^n_h(K)$. 
On each non-interface element $K$, $\bfv_h$ is just a constant vector, so $v_h\in \mathcal{P}_1(K)$. Therefore, we conclude $v_h\in V^n_h$.

Second, we just, to the end, prove $\div(\bfV^f_h)= Q_h$. 
Given each $q\in Q_h$, there exists a regular potential $\bfu \in \bfH^1(\Omega)$ s.t. $\div \bfu = q$. Then, we define $\bfu_h = I^f_h\bfu$ and 
$$
|K| ~q|_K =  \int_K \div(\bfu) \dd \bfx = \int_{\partial K} \bfu \cdot\bfn \dd s  = \int_{\partial K} I^f_h\bfu \cdot\bfn \dd s =  \int_K \div(I^f_h\bfu) \dd \bfx
$$
which implies $\div(I^f_h\bfu |_K)=q|_K$ on each element $K$ finishing the proof.

To verify $\curl \bfV^e_h = \bfV^f_h\cap \ker(\div)$, we can use a dimension count. 
Denote by $\#\mathcal V_h, \#\mathcal E_h,$ $\#\mathcal F_h, \#\mathcal T_h$ the number of vertices, edges, faces, and elements, respectively. 
From the surjectivity, i.e., $\div(\bfV^f_h)= Q_h$, we know $\dim (\bfV^f_h\cap \ker(\div)) = \dim \bfV^f_h - \dim Q_h = \#\mathcal F_h - \#\mathcal T_h$. 
On the other hand, $\dim \curl \bfV^e_h = \dim \bfV^e_h - \dim( \ker(\curl)\cap  \bfV^e_h) = \dim \bfV^e_h - \dim( \nabla V^n_h) = \#\mathcal E_h - \#\mathcal V_h + 1$. 
Then by Euler's formula, we get $\dim \curl \bfV^e_h = \dim(\bfV^f_h\cap \ker(\div))$. As  $\curl \bfV^e_h \subseteq \bfV^f_h\cap \ker(\div)$, we conclude that they are equal. 
\end{proof}

\section{The Immersed Virtual Element Schemes}
\label{sec:IVEscheme}

Based on the previously established spaces and projections, in this section we are ready to present the IVE schemes. 
With the exact sequence, we also develop fast solvers for the $\bfH(\curl)$ interface problem. 
At last, we present a data structure that can facilitate an efficient and vectorized implementation of the proposed method.

We shall focus on the $H^1$ and $\bfH(\curl)$ interface problems due to their vast applications. 
For simplicity, we let $(\cdot,\cdot)_D$ be the standard $L^2$ inner product on $D$.

\subsection{The IVE Scheme for $H^1$ interface problem} For the $H^1$ case, we define a local bilinear form as
\begin{equation}
\label{IVE_H1_scheme_1}
b_K(u_h,v_h) =   ( \beta_h \bfPi^{e,\beta_h}_K \nabla u_h , \bfPi^{e,\beta_h}_K \nabla v_h )_K + S_K( (I - \bfPi^{e,\beta_h}_K ) \nabla u_h , (I - \bfPi^{e,\beta_h}_K ) \nabla v_h ) ,
\end{equation}
where the projection $\bfPi^{e,\beta_h}_K$ is given in \eqref{B_proj_1} on interface elements $K$ and simply assumed to be the identity operator on non-interface elements as the standard FE spaces are used and computable. 
\LC{The first term in \eqref{IVE_H1_scheme_1} is a reasonable and computable approximation to $(\beta\nabla u_h, \nabla v_h)$, as $\bfPi^{e,\beta_h}_K$ will preserve the piecewise constant space $\bfP^e_0(\beta_h;K)$, 
but it alone does not lead to a stable method as $\bfPi^{e,\beta_h}_K \nabla (\cdot)$ contains a non-trivial kernel. 
Namely, there exists a non-constant function $v_h\in V_h^n$ s.t. $\bfPi^{e,\beta_h}_K \nabla v_h = 0$. 
In the VEM literature, two requirements are imposed for the stabilization $S_K$. 
One is the $k$-consistency, i.e., the stabilization vanishes for polynomial spaces of degree $k$. 
As $\bfPi^{e,\beta_h}_K$ can preserve the piecewise constant space $\bfP^e_0(\beta_h;K)$ and the slice operator $I - \bfPi^{e,\beta_h}_K$ is used, $S_K$ is $0$-consistent. 
Another consideration is the norm equivalence $b_K(v_h,v_h)\eqsim \| \nabla v_h\|^2$. But we really need is the coercivity;  see Lemma \ref{lem_H1_stable} and Section \ref{sec:normequivalence} below for detailed discussion.} 

Various choices of the stabilization have been proposed in the literature~\cite{Beirao-da-Veiga;Brezzi;Cangiani;Manzini:2013principles,2018BrennerSung,Cao;Chen:2018AnisotropicNC} based on different norms on the boundary. 
In this work, we will employ the following surface $H^1$ stabilization: %
\begin{equation}
\label{H1_stab}
S_K(\bfw_h,\bfz_h): = \gamma h_K\sum_{F\in\mathcal{F}_K} ( \bfw^{\tau}_h, \bfz^{\tau}_h)_{F}, %
\end{equation}
where $\bfw_h^{\tau}|_F$ and $\bfz_h^{\tau}|_F$ are the tangential components on the face $F$. 
In particular, we note that $(\nabla v_h)^{\tau}|_F =\nabla v_h -  (\nabla v_h\cdot\bfn_F)\bfn_F = \nabla_F v_h$ is the surface gradient of $v_h$, 
and it is computable since the trace of $v_h$ on $\partial K$ belongs to the standard FE space and is known. 
As the standard FE spaces are defined on the boundary triangulation, the stabilization in \eqref{H1_stab} must be piecewisely computed. 
Then, the global bilinear form is defined as
\begin{equation}
\label{IVE_H1_scheme_2}
b_h(u_h,v_h) = \sum_{K\in \mathcal{T}_h} b_K(u_h,v_h).
\end{equation}
The proposed IVE scheme is to find $u_h\in V^n_h$ such that
\begin{equation}
\label{IVE_H1_scheme_3}
b_h(u_h, v_h) = \sum_{K\in \mathcal{T}_h} \int_K f \, \widetilde{\Pi}^{e,\beta_h}_K v_h \dd \bfx, ~~~~ \forall v_h \in V^n_h.
\end{equation}
Note that the projection of $v_h$ itself is not computable for the current space, and thus we simply employ the approximated gradient $\bfPi^{e,\beta_h}_K \nabla v_h$ and the formula in Table \ref{tab_IFE_func} to form 
\begin{equation}
\label{IVE_H1_scheme_4}
\widetilde{\Pi}^{e,\beta_h}_K v_h = (\bfPi^{e,\beta_h}_K \nabla v_h)\cdot(\bfx - \bfx_K)+ c
\end{equation}
with the constant $c$ chosen such that $\int_{\partial K} \widetilde{\Pi}^{e,\beta_h}_K v_h \dd s = \int_{\partial K} v_h\dd s $. Clearly, there holds 
\begin{equation}\label{eq:tildePi}
\nabla \widetilde{\Pi}^{e,\beta_h}_K v_h = \bfPi^{e,\beta_h}_K \nabla v_h.
\end{equation}
In fact, the $H^1$ interface problem is not our focus. 
Only the stiffness matrix for the $H^1$ interface problem is needed for the auxiliary space preconditioner in the fast solver for the $\bfH(\curl)$ interface problems.

\RG{The stabilization in \eqref{H1_stab} indeed leads to a stable method which is given by the following results.
\begin{lemma}
\label{lem_stab_norm}
For each interface element $K$, there exists a constant depending only on the shape regularity of $K$, the coefficient $\beta$ and the parameter $\gamma$ s.t.
\begin{equation}
\label{lem_stab_norm_eq0} 
\| \sqrt{\beta} \nabla v_h \|_{L^2(K)} \lesssim  |v_h|_{H^{1/2}(\partial K)} \lesssim h^{1/2}_K | v_h |_{H^{1}(\partial K)} \quad v_h\in V^n_h(K).
\end{equation}
\end{lemma}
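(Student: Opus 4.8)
The plan is to decouple the chain into its two inequalities and to prove each by rescaling $K$ to unit diameter, so that the surviving constant is purely geometric (depending only on the shape regularity of the background tetrahedron $K$) and, for the first inequality, on $\beta_{\max}:=\max\{\beta^-,\beta^+\}$.

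For the first inequality I would exploit that every $v_h\in V^n_h(K)$ is the $\beta$-harmonic extension of its boundary trace $g:=v_h|_{\partial K}\in\mathcal B^n_h(\partial K)$: the defining conditions $\beta\nabla v_h\in\bfH(\div;K)$ and $\nabla\cdot(\beta\nabla v_h)=0$ are exactly the weak identity $\int_K\beta\nabla v_h\cdot\nabla\phi\dd\bfx=0$ for all $\phi\in H^1_0(K)$, so $v_h$ minimizes $w\mapsto\int_K\beta|\nabla w|^2\dd\bfx$ over all $w\in H^1(K)$ with $w|_{\partial K}=g$. Comparing with the \emph{unweighted} harmonic extension $\widetilde g\in H^1(K)$ of $g$ would then give
\[
\|\sqrt\beta\,\nabla v_h\|_{L^2(K)}\le\|\sqrt\beta\,\nabla\widetilde g\|_{L^2(K)}\le\sqrt{\beta_{\max}}\,|\widetilde g|_{H^1(K)}\lesssim\sqrt{\beta_{\max}}\,|g|_{H^{1/2}(\partial K)},
\]
the last step being the classical trace/extension estimate on the Lipschitz domain $K$. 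A dilation of $K$ to unit diameter multiplies both $|\widetilde g|^2_{H^1(K)}$ and $|g|^2_{H^{1/2}(\partial K)}$ by the same power $h_K$, so the constant is independent of how $\Gamma$ meets $K$; the relevant trace and Poincar\'e constants are those of \refcite{2018BrennerSung}.

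For the second inequality I would note that $g=v_h|_{\partial K}$ is continuous and piecewise affine on $\mathcal F_K$, hence $g\in H^1(\partial K)$ with $|g|^2_{H^1(\partial K)}=\sum_{F\in\mathcal F_K}\|\nabla_F g\|^2_{L^2(F)}$ (continuity across the edges of $\mathcal F_K$ and across the dihedral edges of $K$ removes any distributional contribution). The bound $|g|_{H^{1/2}(\partial K)}\lesssim h_K^{1/2}|g|_{H^1(\partial K)}$ is the continuous embedding $H^1(\partial K)\hookrightarrow H^{1/2}(\partial K)$ made quantitative in $h_K$: after rescaling $\partial K$ to unit size, $|g|^2_{H^{1/2}(\partial K)}$ carries a factor $h_K$ while $|g|^2_{H^1(\partial K)}$ is scale invariant, so the claim reduces to $|\hat g|_{H^{1/2}(\partial\widehat K)}\lesssim|\hat g|_{H^1(\partial\widehat K)}$ on the fixed surface $\partial\widehat K$ (valid by subtracting means and applying Poincar\'e, both seminorms being invariant under adding constants). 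Crucially, $|\cdot|_{H^{1/2}(\partial K)}$ and $|\cdot|_{H^1(\partial K)}$ are intrinsic to the surface $\partial K$ and do not see the subdivision $\mathcal F_K$.

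The step I expect to require the most care is the uniformity of the two constants. For the first inequality one must compare against the \emph{unweighted} harmonic extension, so that the trace constant remains geometric and the $\beta$-dependence is confined to the innocuous factor $\beta_{\max}$ — using a $\beta$-weighted extension would be circular. For the second, one must recognize that although $\mathcal F_K$ may contain arbitrarily thin triangles, the surface Sobolev seminorms on $\partial K$ are independent of that triangulation, so the embedding constant is controlled solely by the shape regularity of $K$. Everything else — the energy-minimality of the $\beta$-harmonic extension, the trace theorem, and the Gagliardo-seminorm scaling — is standard.
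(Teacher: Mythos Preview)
Your approach is correct and essentially mirrors the paper's proof: both use the energy-minimality of the $\beta$-harmonic extension to compare against an $H^1$ extension of the boundary trace (the paper invokes the inverse trace theorem of Ref.~\refcite{2018BrennerSung} rather than naming the unweighted harmonic extension specifically), and both obtain the second inequality from the $H^1(\partial K)\hookrightarrow H^{1/2}(\partial K)$ embedding with the $h_K^{1/2}$ scaling (the paper simply cites (2.16) of the same reference).
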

\begin{proof}
We use the energy minimization argument. Given each $v_h\in V^n_h(K)$, consider an arbitrary function $w_h\in H^1(K)$ such that $w_h-v_h=0$ on $\partial K$. 
Then, integration by parts on $K^{\pm}$ with the flux jump condition $[\beta \nabla v_h\cdot\bfn]|_{\Gamma^K}=0$ and $\nabla\cdot (\beta \nabla v_h)=0$ yields 
$$
\int_{K} \beta \nabla v_h\cdot \nabla(v_h-w_h) \dd \bfx = \int_{\partial K} \beta \nabla v_h\cdot\bfn (v_h-w_h) \dd s =0.
$$
On one hand, with the H\"older's inequality, it implies that $v_h$ minimizes the $\|\sqrt{\beta}\nabla \cdot \|_{L^2(K)}$ energy norm, i.e.,
\begin{equation}
\label{lem_stab_norm_eq1_2}
\| \sqrt{\beta} \nabla v_h \|_{L^2(K)} \le \| \sqrt{\beta} \nabla w_h \|_{L^2(K)}. 
\end{equation}
On the other hand, by the inverse trace theorem given in~Section 27 in Ref. \refcite{2018BrennerSung}, we have a function $z_h$ such that $z_h=v_h$ on $\partial K$ and
$
 |z_h|_{H^1(K)} \lesssim |v_h|_{H^{1/2}(\partial K)}
$.
Hence, using \eqref{lem_stab_norm_eq1_2} with $w_h=z_h$ we arrive at
\begin{equation}
\label{lem_stab_norm_eq2}
\| \sqrt{\beta} \nabla v_h \|_{L^2(K)} \lesssim \|  \nabla z_h \|_{L^2(K)} \lesssim |v_h|_{H^{1/2}(\partial K)},
\end{equation}
which gives the first inequality in \eqref{lem_stab_norm_eq0}. The second inequality in \eqref{lem_stab_norm_eq0} simply follows from~(2.16) in Ref. \refcite{2018BrennerSung}.
\end{proof}
\begin{lemma}
\label{lem_H1_stable}
There holds that
\begin{equation}
\label{lem_H1_stable_eq0}
\| \sqrt{\beta}\nabla v_h \|^2_{L^2(K)} \lesssim b_K(v_h,v_h), ~~~~ \forall v_h\in V^n_h(K),%
\end{equation}
where the constant depends only on the shape regularity of $K$, $\beta$, and $\gamma$.
\end{lemma}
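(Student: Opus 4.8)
The plan is to bound $\|\sqrt{\beta}\nabla v_h\|_{L^2(K)}$ by the boundary surface--gradient seminorm via the energy--minimization estimate of Lemma \ref{lem_stab_norm}, and then to absorb that seminorm into the two terms that make up $b_K$: the stabilization supplies the part of the surface gradient not captured by $\bfPi^{e,\beta_h}_K$, while the interface--location--robust trace inequality of Lemma \ref{lem_trace_inequa} handles the part that is. Throughout, write $\bfPi:=\bfPi^{e,\beta_h}_K$, so that for $v_h\in V^n_h(K)$,
\begin{equation*}
b_K(v_h,v_h) = \big\|\sqrt{\beta_h}\,\bfPi\nabla v_h\big\|^2_{L^2(K)} + \gamma h_K\sum_{F\in\mathcal{F}_K}\big\|\big((I-\bfPi)\nabla v_h\big)^{\tau}\big\|^2_{L^2(F)},
\end{equation*}
and recall from the discussion after \eqref{H1_stab} that on each triangle $F\in\mathcal{F}_K$ one has $(\nabla v_h)^{\tau}|_F=\nabla_F v_h$, which is piecewise constant on $F$ since $v_h|_{\partial K}$ is piecewise linear.

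First I would invoke Lemma \ref{lem_stab_norm} to get $\|\sqrt{\beta}\nabla v_h\|^2_{L^2(K)}\lesssim h_K|v_h|^2_{H^1(\partial K)}=h_K\sum_{F\in\mathcal{F}_K}\|\nabla_F v_h\|^2_{L^2(F)}$. On each $F$ I split $\nabla_F v_h=\big((I-\bfPi)\nabla v_h\big)^{\tau}|_F+(\bfPi\nabla v_h)^{\tau}|_F$ and apply the triangle inequality, giving
\begin{equation*}
\|\sqrt{\beta}\nabla v_h\|^2_{L^2(K)}\lesssim h_K\sum_{F\in\mathcal{F}_K}\big\|\big((I-\bfPi)\nabla v_h\big)^{\tau}\big\|^2_{L^2(F)}+h_K\sum_{F\in\mathcal{F}_K}\big\|(\bfPi\nabla v_h)^{\tau}\big\|^2_{L^2(F)}.
\end{equation*}
The first sum equals $\gamma^{-1}$ times the stabilization term in $b_K$, hence is $\le\gamma^{-1}b_K(v_h,v_h)$. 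For the second sum I would use that the triangles of $\mathcal{F}_K$ tile $\partial K$ and that the tangential part is pointwise no larger than the full vector, so it is bounded by $h_K\|\bfPi\nabla v_h\|^2_{L^2(\partial K)}$; since $\bfPi\nabla v_h\in\bfP^e_0(\beta_h;K)$ is a piecewise--constant IFE vector, Lemma \ref{lem_trace_inequa} applied on each of the (boundedly many) original faces of $K$ yields $\|\bfPi\nabla v_h\|^2_{L^2(\partial K)}\lesssim h_K^{-1}\|\bfPi\nabla v_h\|^2_{L^2(K)}$. Because $\beta_h$ is bounded below, the second sum is therefore $\lesssim\|\sqrt{\beta_h}\,\bfPi\nabla v_h\|^2_{L^2(K)}\le b_K(v_h,v_h)$. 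Collecting the two bounds yields $\|\sqrt{\beta}\nabla v_h\|^2_{L^2(K)}\lesssim b_K(v_h,v_h)$, with the implied constant depending only on the shape regularity of $K$, on $\beta$ (through the constants in Lemmas \ref{lem_stab_norm} and \ref{lem_trace_inequa} and through $\beta_{\min}$), and on $\gamma$.

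The only genuinely delicate ingredient is Lemma \ref{lem_stab_norm}: it is what substitutes the (non-computable) interior values of the virtual function $v_h$ by its boundary data, and its robustness with respect to the interface location is exactly what prevents the hidden constants from degenerating as the interface approaches a face of $K$. Everything else is a routine combination of the triangle inequality, the partition $\partial K=\bigcup_{F\in\mathcal{F}_K}F$, and the trace inequality of Lemma \ref{lem_trace_inequa}; in particular no full norm equivalence is needed here — only the coercivity direction is claimed — which keeps the argument short and avoids bounding the stabilization from above.
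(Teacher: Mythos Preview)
Your proof is correct and follows essentially the same route as the paper's: invoke Lemma~\ref{lem_stab_norm} to pass to the surface $H^1$-seminorm, split via the projection $\bfPi^{e,\beta_h}_K$, handle the projected part with the interface-robust trace inequality of Lemma~\ref{lem_trace_inequa}, and recognize the remainder as the stabilization. The only cosmetic difference is that the paper phrases the splitting through the scalar auxiliary $\widetilde{\Pi}^{e,\beta_h}_K v_h$ (using $\nabla\widetilde{\Pi}^{e,\beta_h}_K v_h=\bfPi^{e,\beta_h}_K\nabla v_h$), whereas you work directly with the tangential components of the vector $\nabla v_h$; the two are equivalent.
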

\begin{proof}
We will use the projection $\widetilde{\Pi}^{e,\beta_h}_K$ and the relation \eqref{eq:tildePi}. 
By Lemma \ref{lem_stab_norm}, the triangular inequality and the trace inequality by Lemma \ref{lem_trace_inequa}, we have
\begin{equation}
\begin{split}
\label{lem_H1_stable_eq1}
\| \sqrt{\beta}\nabla v_h \|_{L^2(K)} & \lesssim h^{1/2}_K | v_h |_{H^{1}(\partial K)} \lesssim h^{1/2}_K | \widetilde{\Pi}^{e,\beta_h}_K  v_h |_{H^{1}(\partial K)} + h^{1/2}_K |  v_h - \widetilde{\Pi}^{e,\beta_h}_K v_h |_{H^{1}(\partial K)} \\
& \lesssim \|  \bfPi^{e,\beta_h}_K \nabla v_h \|_{L^2(K)} + h^{1/2}_K \| \nabla_F (v_h - \widetilde{\Pi}^{e,\beta_h}_K  v_h )\|_{L^2(\partial K)} \lesssim b_K(v_h,v_h),
\end{split}
\end{equation}
which finishes the proof.
\end{proof}
}

\subsection{The IVE scheme for the $\bfH(\curl)$ interface problem}
In this case, we need to deal with the terms of $\curl \curl \bfu$ and $\bfu$ separately. 
\RG{For the same reason discussed above, we need to project both $\curl \bfu_h$ and $\bfu_h$ and then add their associated stabilization terms to enforce coercivity.} 
For the $\curl\curl$ term, we introduce
\begin{equation}
\begin{split}
\label{IVE_Hcurl_scheme_1}
a^1_{K}(\bfu_h,\bfv_h) = & ( \alpha_h \bfPi^{f,\alpha_h}_K \curl\bfu_h , \bfPi^{f,\alpha_h}_K \curl\bfv_h )_K \\
&+ S^1_K( (I- \bfPi^{f,\alpha_h}_K ) \curl\bfu_h, (I - \bfPi^{f,\alpha_h}_K ) \curl\bfv_h ),
\end{split}
\end{equation}
where, similarly, $\bfPi^{f,\alpha_h}_K$ is chosen as \eqref{A_proj_1} on interface elements but just the identity operator on non-interface elements. \LC{As $\curl \bfV^e_h\subset \bfV^f_h$}, the stabilization term is defined as
\begin{equation}
\label{Hcurl_stab_1}
S^1_K( \bfw_h,\bfz_h ) = \gamma_1 h \sum_{F\in\mathcal{F}_K} ( \bfw_h\cdot\bfn_F , \bfz_h \cdot\bfn_F)_{F},
\end{equation}
where we note that $\curl \bfu_h\cdot\bfn = \rot_F \bfu_h$ for $\bfu_h \in \bfV^e_h$ can be computed through the formula in \eqref{rotcurl} with $\rot_F \bfu_h = |F|^{-1} \int_{\partial F}\bfu_h\cdot\bft \dd s$ on each triangular face $F$.

The bilinear form for the weighted $L^2$ inner product is defined as
\begin{equation}
\begin{split}
\label{IVE_Hcurl_scheme_2}
a^0_{K}(\bfu_h,\bfv_h) = &( \beta_h \bfPi^{e,\beta_h}_K \bfu_h, \bfPi^{e,\beta_h}_K \bfv_h  )_K + S^0_K( ( I- \bfPi^{e,\beta_h}_K ) \bfu_h, ( I- \bfPi^{e,\beta_h}_K ) \bfv_h ),
\end{split}
\end{equation}
where $\bfPi^{e,\beta_h}_K$ is defined in \eqref{B_proj_curl_new} on interface elements and the identity on non-interface elements. 
The stabilization is given by
\begin{equation}
\label{Hcurl_stab_2}
S^0_K( \bfw_h,\bfz_h )  = \gamma_0 \sum_{F\in\mathcal{F}_K} (\bfw_h^{\tau}, \bfz^{\tau}_h )_{F},
\end{equation}
where $\bfw_h^{\tau}$ and $\bfz^{\tau}_h$ still denote the tangential components onto each face $F$. 
With the triangulation on faces, $\bfw_h^{\tau} \in \bfV^e_h$ is computable through the edge DoFs. 
We highlight that the scaling $h^0=1$ in the stabilization $S^0_K( \bfw_h,\bfz_h )$ is different from the usual $h$ in classical VEM in~Ref. \refcite{BEIRAODAVEIGA2021,2017VeigaBrezziDassiMarini,2016VeigaBrezziMarini,2020BeiroMascotto}, 
and this is also the key for the proposed method to produce optimal convergent solutions. 
\LC{Changing the scaling from $\mathcal O(h)$ to $\mathcal O(1)$ may increase the consistency error locally. 
More precisely $S^0_K(\bfv_h, \bfv_h) = \mathcal O(h_K^2)$ while $( \beta_h \bfPi^{e,\beta_h}_K \bfv_h, \bfPi^{e,\beta_h}_K \bfv_h  )_K = \mathcal O(h_K^3)$. 
But such a loss of order $h$ is restricted to the interface elements only whose number is $\mathcal O(h)$ fraction of the total number of elements. 
So overall the $L^2$-norm is still possible of optimal order.}
The theoretical justification has been given for the 2D case in Ref.~\refcite{2021CaoChenGuoIVEM} \LC{and will be explored in a forthcoming paper for the 3D case}.

Then, we can define the global bilinear form
\begin{equation}
\label{IVE_Hcurl_scheme_3}
a_h(\bfu_h,\bfv_h) = \sum_{K\in\mathcal{T}_h} a^1_{K}(\bfu_h,\bfv_h) + a^0_{K}(\bfu_h,\bfv_h).
\end{equation}
The proposed IVE scheme for the $\bfH(\curl)$ interface problem is to find $\bfu_h\in \bfV^e_h$ such that
\begin{equation}
\label{IVE_Hcurl_scheme_4}
a_h(\bfu_h,\bfv_h) =  \sum_{K\in\mathcal{T}_h}  \int_K \bff\cdot \bfPi^{e,\beta_h}_K \bfv_h \dd \bfx, ~~~~ \forall \bfv_h \in \bfV^e_h.
\end{equation}

Next, we show that the proposed stabilization can indeed make the bilinear form coercive. 
\begin{lemma}
\label{lem_Hdiv_stab}
There exists a constant depending only on the shape regularity of $K$ s.t.
\begin{equation}
\label{lem_Hdiv_stab_eq0}
\| \bfv_h \|_{L^2(K)} \lesssim h_K^{1/2} \| \bfv_h\cdot\bfn \|_{L^2(\partial K)} \quad \bfv_h\in \bfV^f_h(K).
\end{equation}
\end{lemma}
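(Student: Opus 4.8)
The plan is to follow the energy-minimization scheme already used for the $H^1$ space in Lemma \ref{lem_stab_norm}. By the proof of Lemma \ref{lem_Hdiv_IVE_DoF}, any $\bfv_h\in\bfV^f_h(K)$ is the unique solution of $\div\bfv_h=c_h$, $\curl(\alpha\bfv_h)=\mathbf{0}$ in $K$ and $\bfv_h\cdot\bfn=r_h$ on $\partial K$, where $c_h\in\mathcal{P}_0(K)$, $r_h:=\bfv_h\cdot\bfn|_{\partial K}\in\mathcal{B}^f_h(\partial K)$, and $|K|c_h=\int_{\partial K}r_h\dd s$. The first step is the variational characterization: $\bfv_h$ minimizes $\int_K\alpha|\bfw|^2\dd\bfx$ over the affine set
\[
\mathcal{A}:=\{\bfw\in\bfH(\div;K):\ \div\bfw=c_h\ \text{in}\ K,\ \bfw\cdot\bfn=r_h\ \text{on}\ \partial K\}.
\]
Since $K$ is contractible (being a tetrahedron) and $\curl(\alpha\bfv_h)=\mathbf{0}$, there is a potential $\phi\in H^1(K)$ with $\alpha\bfv_h=\nabla\phi$. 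For any $\bfw\in\mathcal{A}$ the difference $\bfz:=\bfw-\bfv_h$ satisfies $\div\bfz=0$ and $\bfz\cdot\bfn=0$, so integration by parts gives $\int_K\alpha\bfv_h\cdot\bfz\dd\bfx=\int_K\nabla\phi\cdot\bfz\dd\bfx=-\int_K\phi\,\div\bfz\dd\bfx+\int_{\partial K}\phi\,\bfz\cdot\bfn\dd s=0$, whence $\int_K\alpha|\bfw|^2=\int_K\alpha|\bfv_h|^2+\int_K\alpha|\bfz|^2\ge\int_K\alpha|\bfv_h|^2$. This part is routine, exactly paralleling Lemma \ref{lem_stab_norm}.

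The substantive step is to exhibit one competitor $\bfw\in\mathcal{A}$ with $\|\bfw\|_{L^2(K)}\lesssim h_K^{1/2}\|r_h\|_{L^2(\partial K)}$, and here I would use crucially that the background tetrahedron $K$ is shape-regular (only its interface sub-pieces $K^{\pm}_h$ may degenerate). Let $\bfx_c$ be the incenter of $K$; every $F\in\mathcal{F}_K$ lies in one of the four face planes of $K$, each at distance $\rho_K\eqsim h_K$ from $\bfx_c$, so the sub-tetrahedra $T_F:=\mathrm{conv}(\{\bfx_c\}\cup F)$ form a shape-regular partition of $K$ with $|T_F|=\tfrac13\rho_K|F|\gtrsim h_K|F|$ and $\mathrm{diam}(T_F)\lesssim h_K$. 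On each $T_F$ set $\bfw^{(0)}|_{T_F}(\bfx)=\tfrac{|F|\,r_h|_F}{3|T_F|}(\bfx-\bfx_c)$, the lowest-order Raviart--Thomas bubble with normal flux $r_h|_F$ on $F$ and zero flux on the three faces of $T_F$ meeting $\bfx_c$; then $\bfw^{(0)}\in\bfH(\div;K)$ and $\bfw^{(0)}\cdot\bfn=r_h$ on $\partial K$. A direct scaling count using $|T_F|\gtrsim h_K|F|$ and $\mathrm{diam}(T_F)\lesssim h_K$ gives $\|\bfw^{(0)}\|_{L^2(K)}\lesssim h_K^{1/2}\|r_h\|_{L^2(\partial K)}$ and $\|\div\bfw^{(0)}\|_{L^2(K)}\lesssim h_K^{-1/2}\|r_h\|_{L^2(\partial K)}$, while $\|c_h\|_{L^2(K)}=|K|^{-1/2}\bigl|\int_{\partial K}r_h\dd s\bigr|\lesssim h_K^{-1/2}\|r_h\|_{L^2(\partial K)}$. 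Because $\div\bfw^{(0)}$ is only piecewise constant, I then correct it: since $\int_K(c_h-\div\bfw^{(0)})\dd\bfx=\int_{\partial K}r_h\dd s-\int_{\partial K}r_h\dd s=0$, the Bogovskii right inverse of the divergence on the shape-regular domain $K$ produces $\bfz\in\bfH^1_0(K)$ with $\div\bfz=c_h-\div\bfw^{(0)}$ and, after the usual scaling, $\|\bfz\|_{L^2(K)}\lesssim h_K\|c_h-\div\bfw^{(0)}\|_{L^2(K)}\lesssim h_K^{1/2}\|r_h\|_{L^2(\partial K)}$. Then $\bfw:=\bfw^{(0)}+\bfz\in\mathcal{A}$ and $\|\bfw\|_{L^2(K)}\lesssim h_K^{1/2}\|r_h\|_{L^2(\partial K)}$.

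Combining the two steps, $\alpha_{\min}\|\bfv_h\|_{L^2(K)}^2\le\int_K\alpha|\bfv_h|^2\le\int_K\alpha|\bfw|^2\le\alpha_{\max}\|\bfw\|_{L^2(K)}^2\lesssim h_K\|\bfv_h\cdot\bfn\|_{L^2(\partial K)}^2$, which is exactly \eqref{lem_Hdiv_stab_eq0}, with a constant depending on the shape regularity of $K$ and --- as in Lemma \ref{lem_stab_norm} for $\beta$ --- on the coefficient $\alpha$. I expect the competitor construction (a Raviart--Thomas extension on the incenter star-subdivision followed by a Bogovskii divergence correction) to be the main obstacle, since it must deliver the precise $\mathcal{O}(h_K^{1/2})$ scaling robustly in the number of boundary faces and independently of the interface location; the minimization identity is immediate once the potential $\phi$ is introduced. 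As an alternative that bypasses the explicit competitor, one may instead write $\bfv_h=\alpha^{-1}\nabla\phi$ and test the elliptic interface equation $\div(\alpha^{-1}\nabla\phi)=c_h$ with $\phi-|K|^{-1}\int_K\phi\dd\bfx$, then invoke the scaled Poincar\'e and trace inequalities on the shape-regular $K$ together with $\|c_h\|_{L^2(K)}\lesssim h_K^{-1/2}\|r_h\|_{L^2(\partial K)}$; this is the localized counterpart of the estimate \eqref{lem_divcurl_eq01} in Lemma \ref{lem_divcurl}.
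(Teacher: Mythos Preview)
Your proof is correct, but it follows a genuinely different route from the paper's. The paper simply invokes the already-established a~priori estimate \eqref{lem_divcurl_eq01} of Lemma~\ref{lem_divcurl} (applied to $\omega=K$ with $b=\alpha$, $\bff=\curl(\alpha\bfv_h)=\mathbf 0$, $h=\div\bfv_h$, $g=\bfv_h\cdot\bfn$) in its scaled form
\[
\|\bfv_h\|_{L^2(K)}\lesssim h_K\|\div\bfv_h\|_{L^2(K)}+h_K^{1/2}\|\bfv_h\cdot\bfn\|_{L^2(\partial K)},
\]
and then observes that $\div\bfv_h$ is a constant, so $\|\div\bfv_h\|_{L^2(K)}=|K|^{-1/2}\bigl|\int_{\partial K}\bfv_h\cdot\bfn\,\mathrm ds\bigr|\lesssim h_K^{-1/2}\|\bfv_h\cdot\bfn\|_{L^2(\partial K)}$. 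That is the entire argument. You are in fact aware of this route: your closing ``alternative'' (test $\div(\alpha^{-1}\nabla\phi)=c_h$ with $\phi-\fint_K\phi$ and use scaled Poincar\'e/trace inequalities) is precisely the localized version of Lemma~\ref{lem_divcurl} that the paper invokes.

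Your primary approach---energy minimization in the $\alpha$-weighted $L^2$ norm plus an explicit competitor built from Raviart--Thomas bubbles on the incenter star-subdivision, followed by a Bogovski\u\i{} divergence correction---is more constructive and entirely self-contained: it does not rely on the Saranen decomposition behind Lemma~\ref{lem_divcurl}. The robustness with respect to the interface cut is transparent in your argument, since the key geometric facts $|T_F|=\tfrac13\rho_K|F|\eqsim h_K|F|$ and $\mathrm{diam}(T_F)\lesssim h_K$ depend only on the shape regularity of the background tetrahedron $K$, not on how small or anisotropic the boundary triangles $F\in\mathcal F_K$ may be. The paper's approach is considerably shorter because the interface-robust constants were already secured once and for all in Lemma~\ref{lem_divcurl}; your approach buys independence from that earlier machinery at the cost of the explicit construction.
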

\begin{proof}
As $\curl(\alpha\bfv_h) = \mathbf{ 0}$, by \eqref{lem_divcurl_eq01}, we only need to estimate $\div(\bfv_h)$.
Noticing $\div(\bfv_h)$ is a constant, we can write down
\begin{equation}
\label{lem_Hdiv_stab_eq3}
\begin{aligned}
  \| \div(\bfv_h) \|_{L^2(K)} & = |K|^{-1/2} \abs{\int_K \div(\bfv_h) \dd x} 
\\ 
& = |K|^{-1/2} \abs{ \int_{\partial K} \bfv_h\cdot\bfn \dd s } \lesssim h_K^{-1/2} \| \bfv_h\cdot\bfn \|_{L^2(\partial K)},
\end{aligned}
\end{equation}
where we have used that $|K|/|\partial K|\approx h_K$. %
\end{proof}
\begin{lemma}
\label{lem_Hcurl_stab}
For every function $\bfv_h\in \bfV^e_h$, there holds
\begin{subequations}
\label{lem_Hcurl_stab_eq0}
\begin{align}
    & \| \curl \bfv_h \|^2_{L^2(K)} \lesssim a^1_K(\bfv_h,\bfv_h),  \label{lem_Hcurl_stab_eq01} \\
    & \| \bfv_h \|^2_{L^2(K)} \lesssim a^0_K(\bfv_h,\bfv_h), \label{lem_Hcurl_stab_eq02}
\end{align}
\end{subequations}
where the constants depend only on the shape regularity of $K$, $\beta$, and $\gamma_1$, $\gamma_2$.
\end{lemma}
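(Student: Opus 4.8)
Both estimates have the same architecture, so the plan is to prove them in parallel, in each case reducing the bound to a $\curl$-$\div$ stability estimate on the element $K$ together with the piecewise-constant IFE trace inequality of Lemma~\ref{lem_trace_inequa}. Throughout, the generic constants stay independent of the interface location precisely because the $\curl$-$\div$ estimates (Lemmas~\ref{lem_divcurl} and~\ref{lem_divcurl2}) and Lemma~\ref{lem_trace_inequa} are stated with that property, while shape regularity of the background mesh keeps $\#\mathcal{F}_K = \mathcal O(1)$.

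For \eqref{lem_Hcurl_stab_eq01}, I would set $\bfw := \curl\bfv_h$ and use the discrete complex (Lemma~\ref{lem_IVE_complex}) to place $\bfw\in\bfV^f_h(K)$, so that Lemma~\ref{lem_Hdiv_stab} gives $\|\bfw\|_{L^2(K)}\lesssim h_K^{1/2}\|\bfw\cdot\bfn\|_{L^2(\partial K)}$. Splitting $\bfw = \bfPi^{f,\alpha_h}_K\bfw + (I-\bfPi^{f,\alpha_h}_K)\bfw$ on $\partial K$, the projected piece lies in $\bfP^f_0(\alpha_h;K)$, so the IFE trace inequality (Lemma~\ref{lem_trace_inequa}) on each face yields $h_K^{1/2}\|\bfPi^{f,\alpha_h}_K\bfw\|_{L^2(\partial K)}\lesssim \|\bfPi^{f,\alpha_h}_K\bfw\|_{L^2(K)}\lesssim \alpha_{\min}^{-1/2}a^1_K(\bfv_h,\bfv_h)^{1/2}$, while for the remaining piece $h_K\|((I-\bfPi^{f,\alpha_h}_K)\bfw)\cdot\bfn\|_{L^2(\partial K)}^2 = \gamma_1^{-1}S^1_K\big((I-\bfPi^{f,\alpha_h}_K)\bfw,(I-\bfPi^{f,\alpha_h}_K)\bfw\big)\le\gamma_1^{-1}a^1_K(\bfv_h,\bfv_h)$ --- here the $h$-weight built into $S^1_K$ is exactly the one needed. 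Adding the two pieces gives \eqref{lem_Hcurl_stab_eq01}.

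For \eqref{lem_Hcurl_stab_eq02}, I would decompose $\bfv_h = \bfPi^{e,\beta_h}_K\bfv_h + \bfe_h$ with $\bfe_h := (I-\bfPi^{e,\beta_h}_K)\bfv_h$. The projected part is controlled directly: $\|\bfPi^{e,\beta_h}_K\bfv_h\|_{L^2(K)}^2\lesssim\beta_{\min}^{-1}(\beta_h\bfPi^{e,\beta_h}_K\bfv_h,\bfPi^{e,\beta_h}_K\bfv_h)_K\le\beta_{\min}^{-1}a^0_K(\bfv_h,\bfv_h)$. For $\bfe_h$, since $\bfPi^{e,\beta_h}_K\bfv_h\in\bfP^e_0(\beta_h;K)$ is $\curl$-free and $\beta_h$-weighted $\div$-free, we have $\curl\bfe_h = \curl\bfv_h$ and $\div(\beta_h\bfe_h)=0$; applying Lemma~\ref{lem_divcurl2} (rescaled from the unit element to $K$, which inserts the correct powers of $h_K$) gives $\|\bfe_h\|_{L^2(K)}\lesssim h_K\|\curl\bfv_h\|_{L^2(K)} + h_K^{1/2}\|\bfe_h^{\tau}\|_{L^2(\partial K)}$, whose last term squared is $\lesssim\gamma_0^{-1}S^0_K(\bfe_h,\bfe_h)\le\gamma_0^{-1}a^0_K(\bfv_h,\bfv_h)$ because $h_K\le h_0$. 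It then remains to absorb $h_K\|\curl\bfv_h\|_{L^2(K)}$ into $a^0_K(\bfv_h,\bfv_h)^{1/2}$: applying Lemma~\ref{lem_Hdiv_stab} to $\curl\bfv_h\in\bfV^f_h(K)$, then using $\curl\bfv_h\cdot\bfn_F = \rot_F\bfv_h = |F|^{-1}\int_{\partial F}\bfv_h\cdot\bft\dd s$ together with a face-wise trace/inverse inequality for lowest-order N\'ed\'elec functions under the maximum angle condition (Assumption~\hyperref[asp:A]{A}), one gets $\|\curl\bfv_h\|_{L^2(K)}\lesssim h_K^{-1}\|\bfv_h^{\tau}\|_{L^2(\partial K)}$; splitting $\bfv_h^{\tau} = (\bfPi^{e,\beta_h}_K\bfv_h)^{\tau} + \bfe_h^{\tau}$ and invoking Lemma~\ref{lem_trace_inequa} on the first summand turns this into $h_K\|\curl\bfv_h\|_{L^2(K)}\lesssim\|\bfPi^{e,\beta_h}_K\bfv_h\|_{L^2(K)} + h_K^{1/2}\|\bfe_h^{\tau}\|_{L^2(\partial K)}\lesssim a^0_K(\bfv_h,\bfv_h)^{1/2}$. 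Collecting all the bounds yields \eqref{lem_Hcurl_stab_eq02}.

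The hard part is this last step. Unlike $S^1_K$, the stabilization $S^0_K$ carries no power of $h$, so controlling $h_K\|\curl\bfv_h\|_{L^2(K)}$ by $S^0_K$ is possible only if the face-wise trace/inverse inequality for N\'ed\'elec functions holds with a constant independent of the (possibly strongly anisotropic) aspect ratios of the triangular faces produced by the interface-fitted boundary triangulation --- which is exactly what the maximum angle condition in Assumption~\hyperref[asp:A]{A} is there to guarantee, and exactly why the non-classical $\mathcal O(1)$ scaling of $S^0_K$ is indispensable. One also has to carry the $\mathcal O(h_K^2)$ geometric mismatch between $\Gamma^K$ and $\Gamma^K_h$ through the divergence-free identities used for $\bfe_h$; this is harmless at lowest order but must be tracked.
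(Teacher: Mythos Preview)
Your argument for \eqref{lem_Hcurl_stab_eq01} is essentially the paper's.

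For \eqref{lem_Hcurl_stab_eq02} the approaches diverge. The paper applies Lemma~\ref{lem_divcurl2} directly to $\bfv_h$ --- using $\div(\beta\bfv_h)=0$ with the true coefficient $\beta$, so no $\beta$ versus $\beta_h$ mismatch ever enters --- to obtain $\|\bfv_h\|_{L^2(K)}\lesssim h_K\|\curl\bfv_h\|_{L^2(K)}+h_K^{1/2}\|\bfv_h\times\bfn\|_{L^2(\partial K)}$, and splits only the boundary term via $\bfPi^{e,\beta_h}_K$ and Lemma~\ref{lem_trace_inequa}. The remaining term $h_K\|\curl\bfv_h\|_{L^2(K)}$ is \emph{not} absorbed into $a^0_K$; instead the paper reuses the chain \eqref{lem_Hdiv_stab_eq-1}--\eqref{lem_Hcurl_stab_eq2} from part~(a), which controls it by $h_K\,a^1_K(\bfv_h,\bfv_h)^{1/2}$. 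What the proof actually delivers is therefore $\|\bfv_h\|^2\lesssim a^0_K(\bfv_h,\bfv_h)+h_K^2\,a^1_K(\bfv_h,\bfv_h)$ --- note the dependence on $\gamma_1$ already signalled in the statement's constants.

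Your route instead attempts the stronger absorption $h_K\|\curl\bfv_h\|_{L^2(K)}\lesssim a^0_K(\bfv_h,\bfv_h)^{1/2}$ via an inverse inequality $\|\rot_{\partial K}\bfv_h^{\tau}\|_{L^2(\partial K)}\lesssim h_K^{-1}\|\bfv_h^{\tau}\|_{L^2(\partial K)}$ for $\bfv_h^{\tau}\in\mathcal{B}^e_h(\partial K)$. This is precisely the step that is not robust with respect to the interface location: the boundary triangulation can contain triangles $T$ with $h_T\ll h_K$ (whenever the interface cuts close to a vertex), and on such $T$ the N\'ed\'elec inverse estimate yields a factor $h_T^{-1}$, not $h_K^{-1}$. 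The maximum angle condition in Assumption~\hyperref[asp:A]{A} bounds angles --- hence interpolation error --- but says nothing about the ratio $h_T/h_K$. The paper's Section~\ref{sec:normequivalence} flags exactly this obstruction: inverse constants $C_{\rm inv}$ on the boundary triangulation may blow up with shrinking triangles, which is why robust two-sided norm equivalence is not claimed. So your final absorption step has a genuine gap; the paper's detour through $a^1_K$ is how one avoids it.
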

\begin{proof}
Let us first show \eqref{lem_Hcurl_stab_eq01}. By Lemma \ref{lem_Hdiv_stab} and the de Rham complex, we have
\begin{equation}
\label{lem_Hdiv_stab_eq-1}
 \| \curl \bfv_h \|_{L^2(K)} \lesssim h^{1/2}_K \| \curl\bfv_h\cdot \bfn \|_{L^2(\partial K)}.
\end{equation}
Then, we apply the trace inequality for the $\bfH(\div)$ functions by Lemma \ref{lem_trace_inequa} to obtain
\begin{equation}
\begin{split}
\label{lem_Hcurl_stab_eq2}
\| \curl\bfv_h\cdot \bfn \|_{L^2(\partial K)} & \lesssim \| \bfPi^{f,\alpha_h}_K \curl \bfv_h\cdot\bfn \|_{L^2(\partial K)} 
\\
& \quad + \| \curl \bfv_h\cdot\bfn - \bfPi^{f,\alpha_h}_K \curl \bfv_h\cdot\bfn \|_{L^2(\partial K)} 
\\
& \lesssim h^{-1/2}_K \| \bfPi^{f,\alpha_h}_K \curl \bfv_h \|_{L^2( K)} \\
&\quad + \| \curl \bfv_h\cdot\bfn - \bfPi^{f,\alpha_h}_K \curl \bfv_h\cdot\bfn \|_{L^2(\partial K)}.
\end{split}
\end{equation}
Substituting \eqref{lem_Hcurl_stab_eq2} into \eqref{lem_Hdiv_stab_eq-1} yields \eqref{lem_Hcurl_stab_eq01}.
As for \eqref{lem_Hcurl_stab_eq02}, applying \eqref{lem_divcurl2_eq01} with the appropriate scaling, we have
\begin{equation}
\begin{split}
\label{lem_Hcurl_stab_eq1}
\| \bfv_h \|_{L^2(K)}  \lesssim h_K \| \curl \bfv_h \|_{L^2(K)} + h^{1/2}_K \| \bfv_h \times \bfn \|_{L^2(\partial K)}.
\end{split}
\end{equation}
For the second term, we apply the trace inequality for the $\bfH(\curl)$ functions by Lemma \ref{lem_trace_inequa} to obtain
\begin{equation}
\begin{split}
\label{lem_Hcurl_stab_eq3}
\| \bfv_h \times \bfn \|_{L^2(\partial K)} & \lesssim \| \bfPi^{e,\beta_h}_K \bfv_h \times \bfn \|_{L^2(\partial K)} + \| ( \bfv_h - \bfPi^{e,\beta_h}_K \bfv_h) \times \bfn \|_{L^2(\partial K)} \\
& \lesssim h^{-1/2}_K  \| \bfPi^{e,\beta_h}_K \bfv_h \|_{L^2( K)} + \| ( \bfv_h - \bfPi^{e,\beta_h}_K \bfv_h)^{\tau} \|_{L^2(\partial K)}.
\end{split}
\end{equation}
Putting \eqref{lem_Hcurl_stab_eq2} and \eqref{lem_Hcurl_stab_eq3} into \eqref{lem_Hcurl_stab_eq1}, we have the desired estimate.
\end{proof}
We emphasize that the coercivity constants depend only on the shape regularity of the underlying triangulation, 
the coefficient $\beta$, and the parameter $\gamma$, 
but most importantly, not on the location of the intersection points, i.e., robust to the cut of the interface.

\RG{
\begin{remark}
\label{rem_gamma}
Lemmas \ref{lem_H1_stable} and \ref{lem_Hcurl_stab} immediately imply that $b_h(\cdot,\cdot)$ and $a_h(\cdot,\cdot)$ are norms on $V^n_h$ and $\bfV^e_h$, respectively. 
These two lemmas hold regardless of the choice of $\gamma>0$, $\gamma_1>0$ and $\gamma_2>0$, i.e., the method does not need those parameters to be large enough required by many traditional unfitted-mesh methods~\cite{2015BurmanClaus,2015LinLinZhang}, 
and thus the resulting linear systems are always positive-definite. 
Roughly speaking, it can be understood that the proposed IVE scheme is ``more conforming" such that weaker weights are needed in the stabilization. 
This is particularly important for the $\bfH(\curl)$ problem, as we do not need to use $h^{-1}$ scaling in the stabilization, 
which can avoid the suboptimal convergence in \eqref{h-1stab}. 
Instead, $\mathcal{O}(1)$ and $\mathcal{O}(h)$ scaling are used for the stabilization associated with $\bfu_h$ and $\curl \bfu_h$ terms, 
which is key to achieve optimal convergence by our numerical experiments. 
Nevertheless, the rigorous analysis is still very involved, and in the next subsection we shall briefly describe the challenges. 
\end{remark}}

\subsection{Comments on the norm equivalence and error analysis}\label{sec:normequivalence}
\RG{In the vast VEM literature~\cite{Beirao-da-Veiga;Brezzi;Cangiani;Manzini:2013principles,2020BeiroMascotto,2018BrennerSung}, the norm equivalence results are desired for error and stability analysis:
\begin{subequations}
\label{rem_normequiv_eq1}
\begin{align}
    &  \| \nabla v_h \|^2_{L^2(K)} \lesssim b_K(v_h,v_h) \lesssim \| \nabla v_h \|^2_{L^2(K)} ~~~ \forall v_h\in V^n_h(K),    \\
    &  \| \bfv_h\|^2_{\bfH(\curl;K)} \lesssim a_{K}(\bfv_h,\bfv_h) \lesssim \| \bfv_h\|^2_{\bfH(\curl;K)} ~~~ \forall \bfv_h\in \bfV^e_h(K).
\end{align}
\end{subequations}
The left inequalities in \eqref{rem_normequiv_eq1}, i.e., the coercivity, are given by Lemmas \ref{lem_H1_stable} and \ref{lem_Hcurl_stab}, respectively, 
in which the constants are independent of interface location. 
Although the right two inequalities in \eqref{rem_normequiv_eq1} indeed hold, their constants may depend on the interface location, 
as the inverse inequalities on the boundary triangulation are needed in the analysis.} 

\RG{Let us take the $H^1$ case as an illustration example. By the boundedness property of the projection $\bfPi^{e,\beta_h}_K$, we trivially have 
$$\|\bfPi^{e,\beta_h}_K \nabla v_h \|_{L^2(K)} \le C_{Pr} \| \nabla v_h \|_{L^2(K)},$$
where the constant $C_{Pr}$ only depends on the geometry of $K$. The problem is on the stabilization term. We may prove 
\begin{equation*}
\begin{split}
 & h_K^{1/2}\|\nabla_F(I - \widetilde{\Pi}^{e,\beta_h}_K)v_h \|_{L^2(\partial K)}\\
 \le {}& C_{\rm inv} \| (I - \widetilde{\Pi}^{e,\beta_h}_K)v_h \|_{L^2(\partial K)}\\
\le {}& C_{\rm inv} C_t \left ( h^{-1/2}_K \| (I - \widetilde{\Pi}^{e,\beta_h}_K)v_h \|_{L^2(K)} + h^{1/2}_K | (I - \widetilde{\Pi}^{e,\beta_h}_K)v_h |_{H^1(K)} \right ) \\
\le {}& C_{\rm inv} C_t (C_{Pc}+1)  h^{1/2}_K | (I - \widetilde{\Pi}^{e,\beta_h}_K)v_h |_{H^1(K)} \\
\le {}& C_{\rm inv} C_t (C_{Pc}+1) (C_{Pr}+1)  h^{1/2}_K \|  \bfPi^{e,\beta_h}_K \nabla v_h \|_{L^2(K)},
\end{split}
\end{equation*}
where the first inequality is an inverse inequality on the surface triangulation with the constant $C_{\rm inv}$, 
and the second and the third ones are the trace and Poincar\'e inequalities with the constants $C_t$ and $C_{Pc}$. 
Note that $C_t$, $C_{Pr}$ and $C_{Pc}$ only depend on the geometry of $K$; but $C_{\rm inv}$ depends on the element boundary triangulation which contain anisotropic triangles, and shrinking elements may make $C_{\rm inv}$ blow up. 
Indeed, restricting to the boundary $(I - \widetilde{\Pi}^{e,\beta_h}_K)v_h|_{F}$ is linear, and its surface gradient can be computed exactly using the $\cot$ formulae. 
The existence of small angles in the boundary triangulation will make the corresponding entry large, and thus robust norm equivalence may not hold. A similar issue applies to the $\bfH(\curl)$ case. }

\RG{In fact, for 3D VEM, to our best knowledge, almost all the analysis in the literature requires shape regularity of both the elements and faces such that the norm equivalence above can hold. 
In our case, however, the boundary triangulation does not satisfy the shape regularity causing essential difficulties for analysis. 
An alternative approach is to use the ``error equation" approach~\cite{Cao;Chen:2018Anisotropic,2021CaoChenGuo,2021CaoChenGuoIVEM} that may overcome the shape regularity issue. 
A careful study of the robustness to the shape of boundary triangulations is needed.
}

\subsection{Implementation}
\label{subsec:implement}

Inherited from the classical VEM, the implementation of the proposed algorithm is highly vectorized. Computing the projections from IVE spaces and assembly of matrices significantly outperform the classical IFE methods. 
To see this, following Ref.~\refcite{2017ChenWeiWen}, we describe a \textsf{face2elem} and a \textsf{face} data structure that can greatly facilitate the implementation. 
\textsf{face2elem} is a vector mapping from each (local) face's index to its mother element's index. \textsf{face} is a matrix containing each face's DoFs (node or edge) on its rows. 
Here, we use the tetrahedral interface elements in Figure \ref{fig:tetId} to illustrate the data structures. 
In Figure \ref{fig:tetId}, the red and blue segments are, respectively, cutting edges by the interface and newly added edges for the surface triangulation. 
The indices are shown on the two plots for all the vertices and edges. Suppose the index of this element is $1$, and then the desired data structures of \textsf{face2elem} and \textsf{face} are shown in \eqref{dataStructure}.

\begin{figure}[h]
\centering
\includegraphics[width=1.45in]{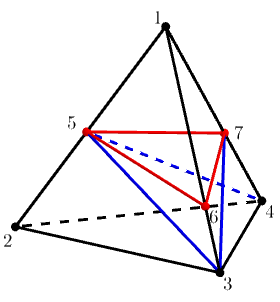}
\includegraphics[width=1.5in]{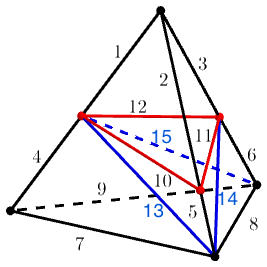}
\caption{Indices of nodes (left) and edges (right) of an interface element.}
\label{fig:tetId}
\end{figure}

\begin{equation}
\label{dataStructure}
\textsf{face2elem}:\left.\begin{array}{|c|c|}\hline 1 & 1 \\\hline 2 & 1 \\\hline 3 & 1 \\\hline 4 & 1 \\\hline 5 & 1 \\\hline 6 & 1 \\\hline 7 & 1 \\\hline 8 & 1 \\\hline 9 & 1 \\\hline 10 & 1 \\\hline \end{array}\right.
~~~~
\textsf{face}(\text{nodes}): \left.\begin{array}{c}1 \\2 \\3 \\4 \\5 \\6 \\7 \\8 \\9 \\10\end{array}\right.
\left.\begin{array}{|c|c|c|}\hline 1 & 5 & 6 \\\hline 1 & 6 & 7 \\\hline 1 & 7 & 5 \\\hline 2 & 3 & 5 \\\hline 3 & 6 & 5 \\\hline 3 & 4 & 7 \\\hline 7 & 6 & 3 \\\hline 4 & 5 & 7 \\\hline 2 & 4 & 5 \\\hline 2 & 3 & 4 \\\hline \end{array}\right.
~~~~
\textsf{face(\text{edges})}: \left.\begin{array}{c}1 \\2 \\3 \\4 \\5 \\6 \\7 \\8 \\9 \\10\end{array}\right.
\left.\begin{array}{|c|c|c|}\hline 1 & 10 & 2 \\\hline 2 & 11 & 3 \\\hline 3 & 12 & 1 \\\hline 7 & 13 & 4 \\\hline 13 & 5 & 10 \\\hline 8 & 6 & 14 \\\hline 5 & 14 & 11 \\\hline 6 & 12 & 15 \\\hline 4 & 9 & 15 \\\hline 7 & 8 & 9 \\\hline \end{array}\right.
\end{equation}

The key feature of VEM in implementation is to compute the projections through the DoFs. Let us use $\bfPi^{f,\alpha_h}_K\curl\bfv_h$ and the formula in \eqref{A_proj_2} as an example to describe the procedure. 
Given an element $K$ with the global index $i$, $i=1,2,...,|\mathcal{T}_h|$, we need to compute $\bfPi^{f,\alpha_h}_K\curl\bfvarphi^e_{h,k}$ for a local edge index $k$, $k=1,2,...,|\mathcal{E}_K|$. 
Here $\bfPi^{f,\alpha_h}_K\curl\bfvarphi^e_{h,k}$ is a constant vector denoted as $\bfc_k\in \bfP^f_0(\alpha_h;K)$ with $\bfc^{\pm}_k := ( \bfPi^{f,\alpha_h}_K \curl\bfvarphi^e_{h,k} )|_{K_{h}^{\pm}}$, where $\bfvarphi^e_{h,k}$ is the edge shape function with respect to the $k$-th edge. 
Now, we let the test function $\bfp_h$ in \eqref{A_proj_2} be the three unit vectors: $\bfp^+_{h,l}=\bfe_l$, $l=1,2,3$, with $\bfp^-_{h,l}=M^{f,\alpha_h}_K\bfp^+_{h,l}$ with $M^{f,\alpha_h}_K$ given by \eqref{lem_ABeigen_eq1}. 
Then, we can rewrite \eqref{A_proj_2} into a matrix-vector equation only about $\bfc^-_k$:
\begin{equation}
\begin{aligned}
\label{implement1}
& (\alpha^- |K_{h}^-| + \alpha^+ |K_{h}^+| ( M^{f,\alpha_h}_K )^{\top} M^{f,\alpha_h}_K ) \bfc^-_k %
\\
 =& \int_{\partial K} (\bfvarphi^e_{h,k} \times \bfn) \left[ (\alpha_h\bfp_{h,1})^{\tau}, (\alpha_h\bfp_{h,2})^{\tau}, (\alpha_h\bfp_{h,3})^{\tau} \right]^{\top}  \dd s, 
\end{aligned}
\end{equation}
where $\bfvarphi^e_{h,k} \times \bfn$ is a rotation of $(\bfvarphi^e_{h,k})^{\tau}$. $(\bfvarphi^e_{h,k})^{\tau}$ is a 2D N\'ed\'elec polynomial function tangentially defined on each face, and the integration of this function associated with the $k$-th edge can be determined by the two nodes retrieved from the \textsf{face} data structure. 
Particularly, if the $k$-th edge does not belong to the boundary of a face $F$, then there is no contribution of this face to the right-hand side of \eqref{implement1}.

With \eqref{implement1}, we highlight that the geometric information needed in the computation has been automatically encoded in the data structures \textsf{face2elem} and \textsf{face}, which is the key for the efficient vectorized code. We report the CPU time for computing the projections and generate matrices in Table \ref{table:example1} to demonstrate the efficiency. This very feature makes the proposed method distinguished from all the classical IFE methods in the literature that have to use more detailed geometric information to compute the IFE functions.

\subsection{Preconditioning}
\label{subsec:precond}

Solving the resulting linear system from Maxwell's equations is one of the central challenges in computational electromagnetism, and the interface may make it even more difficult. 
With a slight abuse of notation, we denote the linear system from the proposed IVE discretization of the $\bfH(\curl)$ interface problem as
\begin{equation}
\label{HcurlLS}
A \bfu_h =  \bff_h,
\end{equation}
where $\bfu_h\in \mathbb{R}^{\# \texttt{edge}}$ denotes the vector representation in DoFs.
This system is solved by the preconditioned conjugate gradient (PCG) method. To our best knowledge, the development of fast solvers of VEM specifically for $H(\curl)$-equations has not been discussed in any literature. Without suitable preconditioners, the PCG solver can be extremely slow, see the comparison in Table \ref{table:exampleHcurl}. 
In this work, we develop a fast solver for IVE discretization of the $\bfH(\curl)$ interface problem that involves two techniques. 
Thanks to the de Rham complex, the first one is the \emph{auxiliary space preconditioner} for the $\bfH(\curl)$ equation which is developed by Hiptmair and Xu in Ref.~\refcite{2007HiptmairXu} (HX preconditioner) based on the auxiliary space framework in Ref.~\refcite{Xu1996auxiliary}. 
The second one is a block diagonal smoother to handle the anisotropic element shape near the interface. \LC{In the experiments, both are used the implementation in $i$FEM~\cite{Chen:2008ifem}.}

The resulting HX-preconditioner for the $\bfH(\curl)$ systems is in the form
\begin{equation}\label{cnxBcurl}
B^{\curl}= R^{\curl}+\Pi {\boldsymbol B}^{\grad}
\Pi^{\top} + G\; B^{\grad} G^{\top},
\end{equation}
which consists of the following three components:
\begin{itemize}
  \item a smoother $R^{\curl}$ of the $\bfH(\curl)$ matrix $A$,
  \item an algebraic multigrid (AMG) solver $B^{\grad}$ for a scalar Laplacian matrix,
  \item an AMG solver ${\boldsymbol B}^{\grad}$ for a vector Laplacian matrix. 
\end{itemize}
We simply employ the incidence matrix associated with the operator \LC{$\nabla:\bfV^n_h\rightarrow \bfV^e_h$} as the discrete gradient matrix $G$ which resembles that from the lowest order N{\'e}d{\'e}lec element on simplicial meshes. 
$G$ maps the nodal DoFs (columns) to edge DoFs (rows). 
There are two nonzero entries, $\pm 1$, on each row. 
The columns of these entries correspond to the nodes of the edge. 
The sign is determined by the global orientation of an edge. 
The node-to-edge transfer matrix is denoted by $\Pi: \prod_{i=1}^3\mathbb{R}^{\# \texttt{node}}\to\mathbb{R}^{\# \texttt{edge}}$. 
Note that these two matrices being well-defined are based on the node and edge DoFs of the $H^1$ and $\bfH(\curl)$ IVE spaces.

For $\bfH(\curl)$ problems, it is known that a multigrid solver for Poisson-type equations is not sufficient since the discrete operator corresponding to $\curl(\alpha \curl) + \beta I$ behaves differently for a gradient field and a solenoidal field (see e.g., Ref.~\refcite{ArnoldFalkEtAl2000Multigrid}). 
When sufficient piecewise regularity is assumed, we have by Ref.~\refcite{CostabelDaugeEtAl1999Singularities}
$$
\|\alpha \grad \bfu\|^2\eqsim \|\alpha \curl \bfu\|^2+\|\alpha \div \bfu\|^2.
$$
Hence, if $\bfu =\curl \bfw \in (\ker(\curl))^\perp$, for some suitable $\bfw$, such that $\div \bfu = 0$, then 
$$
(\alpha\curl \bfu, \curl \bfu)+(\beta \bfu, \bfu)\eqsim (\alpha\grad \bfu, \grad \bfu)+(\beta \bfu, \bfu),
$$
which corresponds to the following operator:
\begin{equation}
  \label{cnx-eq:elliptic1}
{\boldsymbol B}^{\grad}\bfu:= -\div(\alpha \nabla \bfu)+\beta \bfu
\end{equation}
that will be assembled as an auxiliary matrix and can be solved by an AMG solver for the vector $H^1$-interface problem.
On the other hand,  if $\bfu, \bfv\in \ker(\curl)$, i.e., $\bfu=\nabla p$ and $\bfv=\nabla q$, for some suitable $p,q$, then
$$
(\alpha\curl \bfu, \curl \bfv)+(\beta \bfu, \bfv)= (\beta \nabla p, \nabla q),
$$
thus we can formulate the matrix problem for the gradient part of the solution by 
$B^{\grad} = G^{\top}AG$, which corresponds to the following operator:
\begin{equation}
 \label{cnx-eq:elliptic2}
B^{\grad} p :=  -\div(\beta \nabla p)
\end{equation}
that can be again solved efficiently by an AMG solver for the $H^1$-interface problem.

Next, we present a block diagonal smoother (preconditioner). A block matrix is formed by the edge DoFs in the neighborhood expanding from the interface. We begin with the collection of the DoFs that are near the interface:
\begin{equation}
\label{DoFset1}
\mathcal{D}_1 = \{ e\in\mathcal{E}_h : ~ \exists K\in \mathcal{T}^i_h ~ \text{such that} ~ e\in\mathcal{E}_K \}.
\end{equation}
Then, starting from $\mathcal{D}_1$ we iteratively define
\begin{equation}
\label{DoFset2}
\mathcal{D}_l = \{ e\in\mathcal{E}_h : e ~ \text{has at least one node belonging to the edges of} ~ \mathcal{D}_{l-1}  \}.
\end{equation}
Let $A_l$ be the matrix of the entries in $A$ associated with the DoFs in $\mathcal{D}_l $. Then, we rewrite \eqref{HcurlLS} into
\begin{equation}
\label{blockeqn}
A \bfu :=
\begin{pmatrix}
  A_N & A_{Nl} \\
A_{lN} & A_l
\end{pmatrix}
\begin{pmatrix}
    \bfu_N \\ \bfu_l
\end{pmatrix}
=\begin{pmatrix}
    \bff_N\\ \bff_l
\end{pmatrix}.
\end{equation}
Here, the key is to solve the $A_l$ block by a direct solver, which is indeed the price to be paid by the proposed method. 
However, since the size of $A_l$, i.e., $\# \mathcal{D}_l$, is in the order of $\mathcal{O}(\# \text{total DoF}^{2/3})$ for reasonably small $l$, this direct solver is generally efficient. 
The expanding width can reduce the number of iterations required for the resulting solver, see Table \ref{table:exampleHcurlFlat} for the comparison. 
Meanwhile, our numerical experience suggests that the increased cost is negligible for small $l$'s, e.g., the expanding width $l=1$ or $2$ is enough. 
It is almost equivalent to directly solving a 2D linear system which can be efficiently handled by ``backslash'' (\textsf{mldivide}) in Matlab. 
Furthermore, since the direct solver will be called multiple times, we opt to store the $LU$-factorization of $A_l$ in the inner iteration (preconditioning) to be more efficient. 
The residual equation of the $A_N$ block can be efficiently solved using a block or point-wise Gauss-Seidel smoother. 
At last, we summarize the algorithm in the following for a fixed $l$, and denote $A_I:= A_l$.

{\begin{center}
\begin{minipage}{.6\textwidth}

\begin{algorithm}[H]
\caption{An HX preconditioned CG}
\label{alg:HX}
\begin{algorithmic}[1]
\Require $\mathbf{u}^{(0)}$,  $\texttt{tol}$, $M$, $l$, block form of $A$.
\Ensure $\mathbf{u}^{\text{MG}}$.

\State{$k = 0.$}

\State{$\bfr_I \gets \bff_I - A_I \bfu^{(0)}$}

\While{True}

\State{$\bfr_I\gets \text{LUSolve}(A_I, \bfr_I)$}

\State{$\bfr_N \gets \text{Smoother}(A_N, \bfr_N)$}

\State{$\bfr \gets [\bfr_N, \bfr_I]$}

\State{$\bfr \gets \bfr + \Pi (\text{AuxSolve} (A^{\text{aux}}, \Pi^{\top} \bfr))$}

\State{$\bfr_c \gets G (\text{AuxSolve} (A,  G^{\top} \bfr))$}

\State{$\bfr\gets \bfr+\bfr_c$}

\State{$\mathbf{u}^{\text{MG}} \gets \operatorname{CG} (A, \bfr)$}

\If{$k>M$ or $\text{norm}(r) < \texttt{tol}$}
\State{Break}
\EndIf

\State{$k\gets k+1$}
\EndWhile

\end{algorithmic}
\end{algorithm}
\end{minipage}

\end{center}
}

\section{Numerical Examples}

In this section, we present a class of numerical examples to validate the aforementioned advantages of the proposed method. 
The background unfitted mesh is generated by cutting $\Omega$ into $N^3$ cubes and each cube is then cut into several tetrahedra with the mesh size be $h=1/N$.

\begin{figure}[h]
\centering
\includegraphics[width=1.6in]{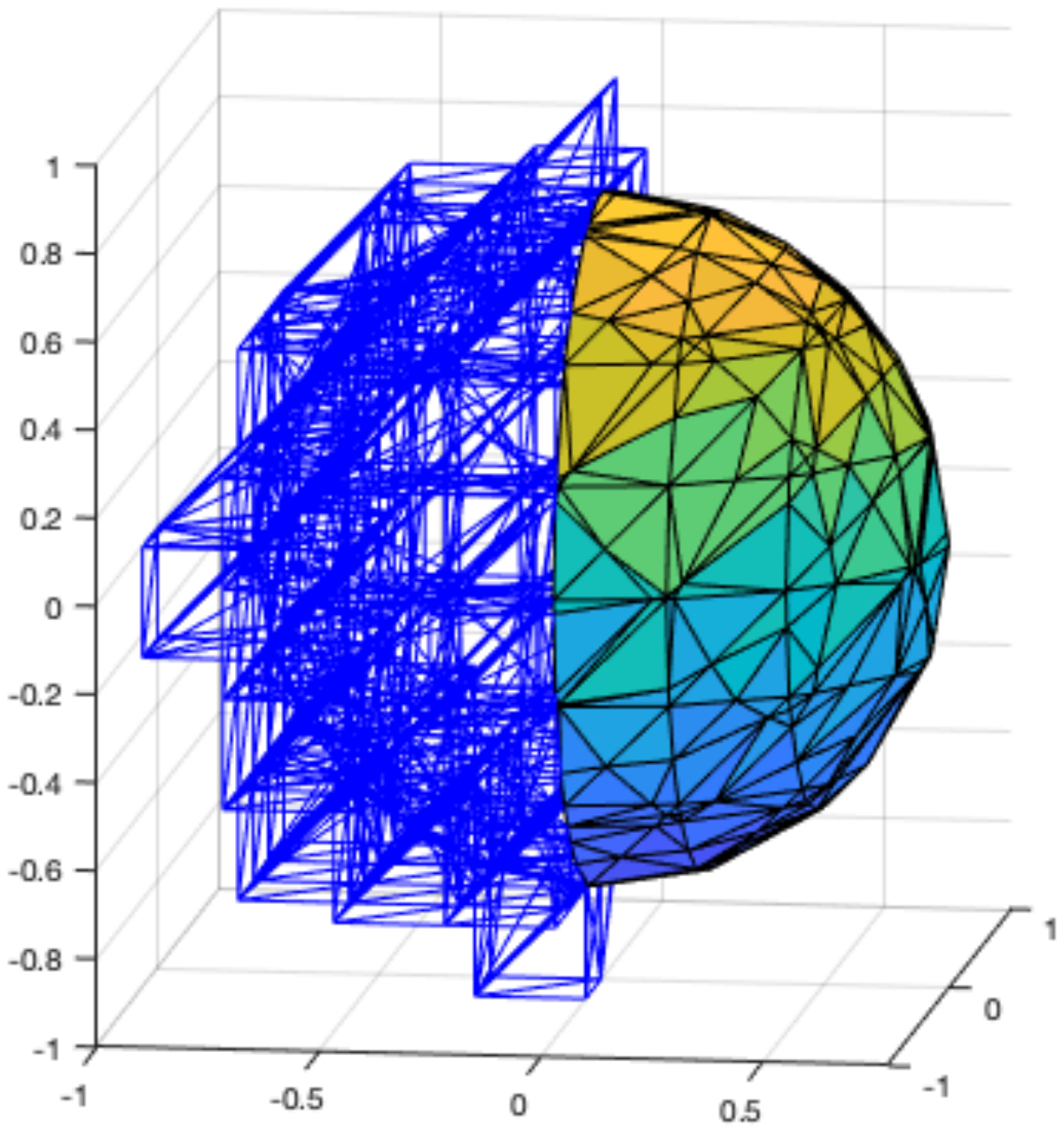}
~~~~~~~~~~
\includegraphics[width=1.6in]{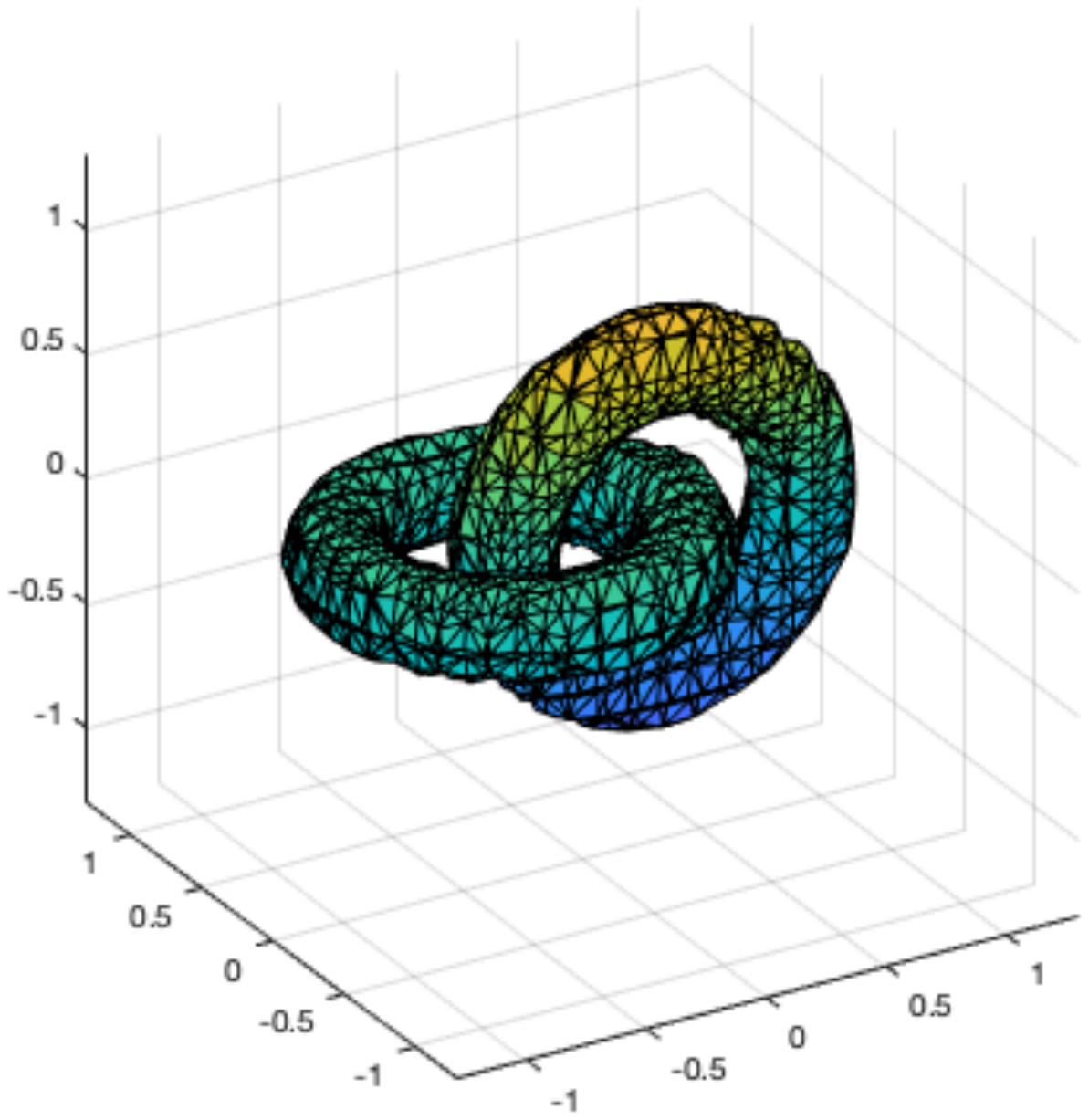}
\caption{Plots of interface and triangulation: sphere (left) and two twisted tori (right). For the spherical interface, the boundary triangulation of interface element is plotted in blue on the left half of the sphere. }
\label{fig:interfaceExp}
\end{figure}

\subsection{The $H^1$-interface problem}
We first consider the $H^1$-interface problem given by \eqref{model_H1} for a spheric interface shown in Figure \ref{fig:interfaceExp} on the domain $\Omega=(-1,1)^3$. The exact solution is constructed as
\begin{equation}
\label{H1example1}
u(\bfx) = 
\begin{cases}
      & \exp( (\| \bfx \|^2 - r^2)/\beta^- ),~~~~~~~~~~ \text{if}~\bfx\in \Omega^-, \\
      & \sin( (\| \bfx \|^2 - r^2)/\beta^+ )+1,~~~~~~ \text{if}~\bfx\in \Omega^+,
\end{cases}
\end{equation}
where the source term of \eqref{model_H1} as well as the boundary conditions are computed accordingly. The numerical experiment is carried on the meshes of $N=10,20,30,...,160$. 
We first report the CPU time to compute the projections of IVE functions and the related matrix assembling in Table \ref{table:example1} for the meshes $N=60$ to $160$. 
Note that at the finest level, there are approximately 153600 interface elements, and based on the proposed data structure, computing the IVE projections is highly efficient. 
As for the matrix assembling, we observe that the majority of time is devoted to the stabilization term, and we believe this is due to a larger number of triangular faces from the boundary triangulation. 
Certainly, these computations are highly parallelizable. In addition, we show the numerical errors for $\beta^-=1$, $\beta^+=10$ and $\beta^-=1$, $\beta^+=100$ in the left two plots of Figure \ref{fig:H1exampCir}. 
Due to the geometric errors caused by coarse meshes, the convergence orders indicated on the graph are computed by incorporating only the errors from $N=60$ to $160$, but it clearly shows the asymptotic optimal convergence. 
Remarkably, the optimal convergence is even achieved for the $L^{\infty}$ norm which is a demanding property for interface problems.
\begin{table}[H]
\begin{center}
\resizebox{\textwidth}{!}{%
\begin{tabular}{|c |c | c|c | c|c | c| }
\hline
Total \# DoFs   & 1367631       & 1771561      & 2248091      & 2803221    &  3442951    & 4173281     \\ \hline
Interface \# DoFs    & 160926       & 191322      & 224682      & 259818    &  298866    & 339774           \\ \hline
Time(s) for projection & 2.96       & 4.00      & 4.88      & 37.92    &  5.65    & 7.13        \\ \hline
Time(s) for matrix assembling   & 14.21       & 18.14     & 21.14      & 30.47    &  37.58    & 42.61        \\ \hline
\end{tabular}
}
\end{center}
\caption{CPU time for computing the projections of IFE functions and the generation of stiffness matrices including the stabilization terms.}
\label{table:example1}
\end{table}

\begin{figure}[htp]
\centering
\includegraphics[width=0.35\textwidth]{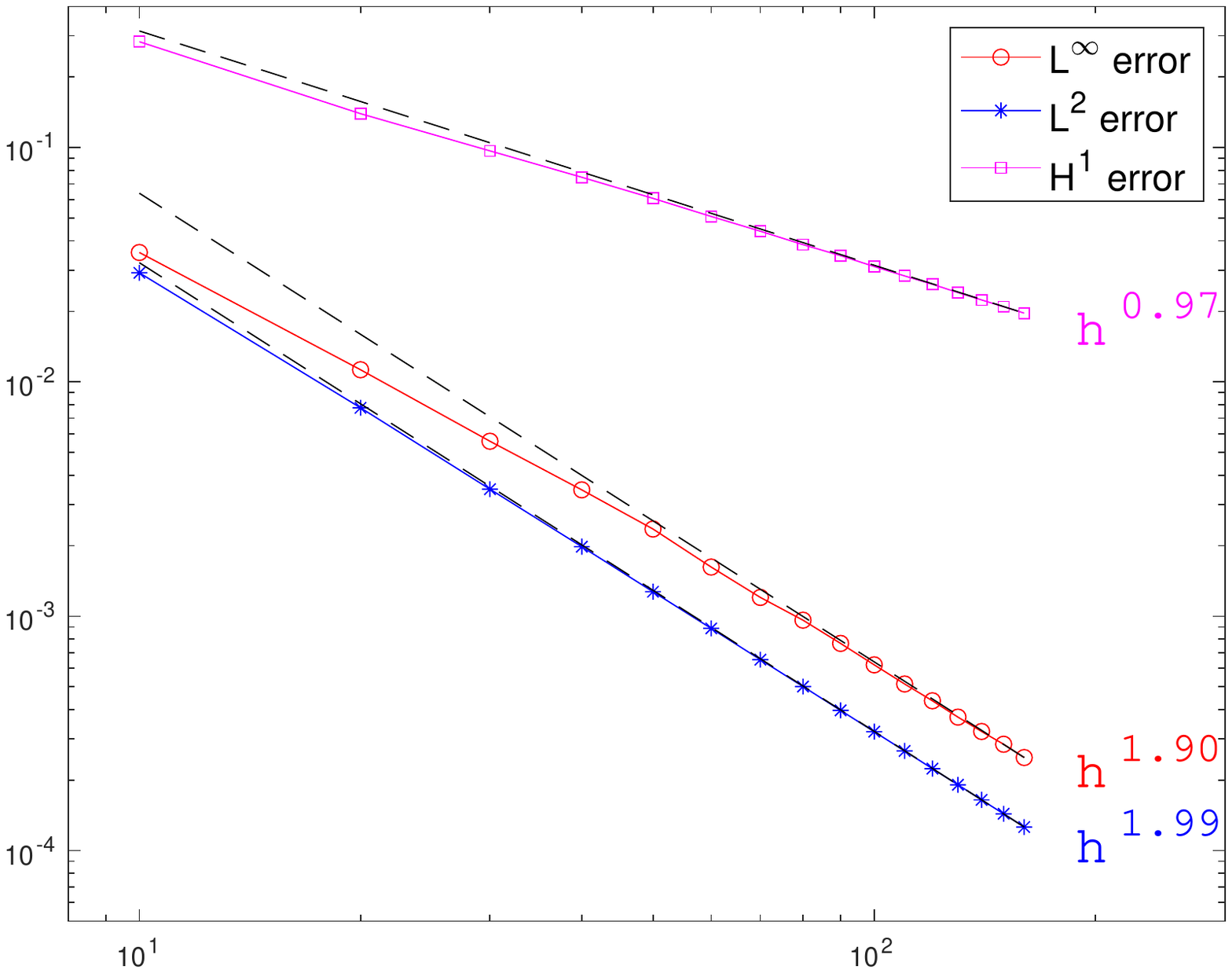}
\includegraphics[width=0.35\textwidth]{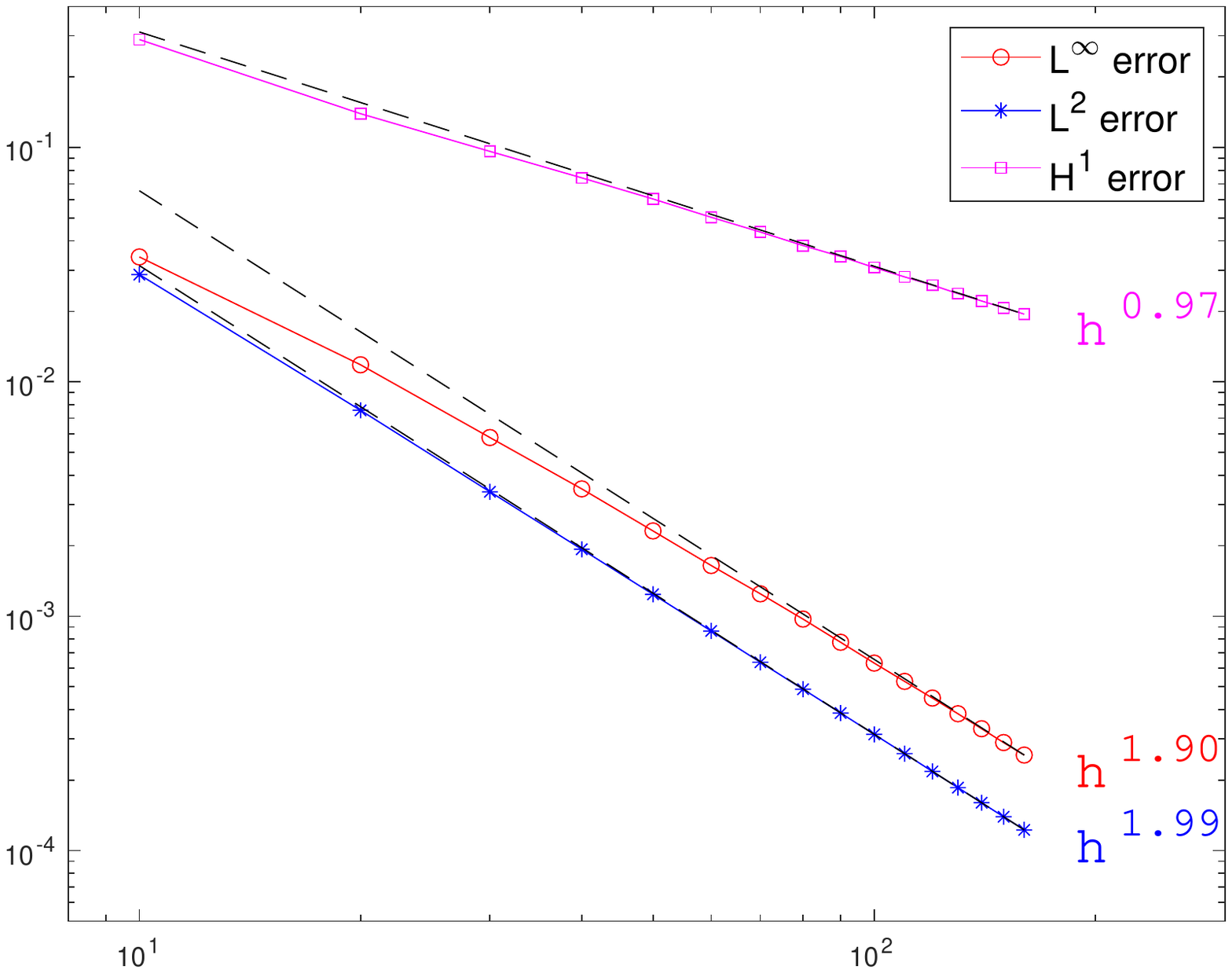}
\\
\includegraphics[width=0.35\textwidth]{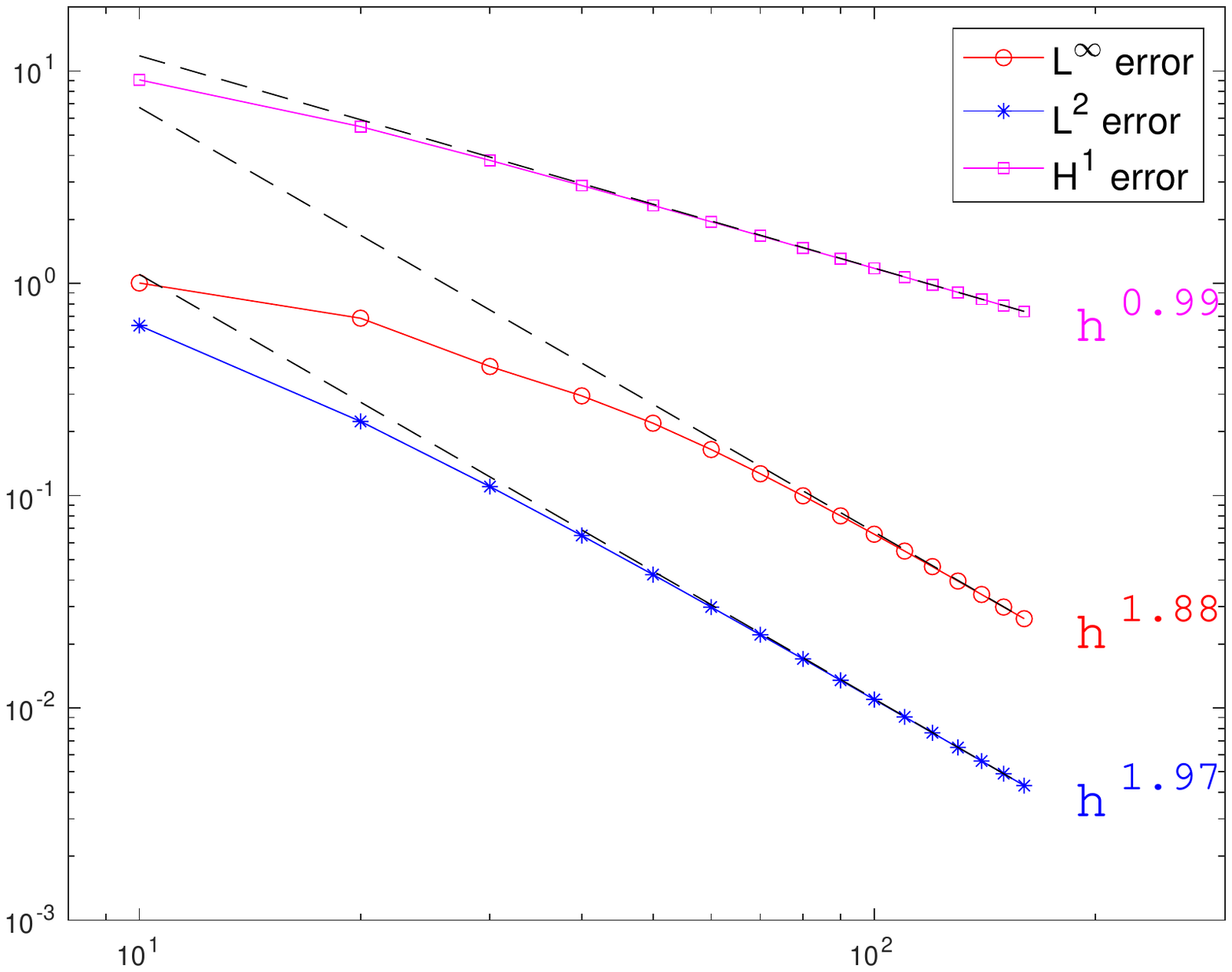}
\includegraphics[width=0.35\textwidth]{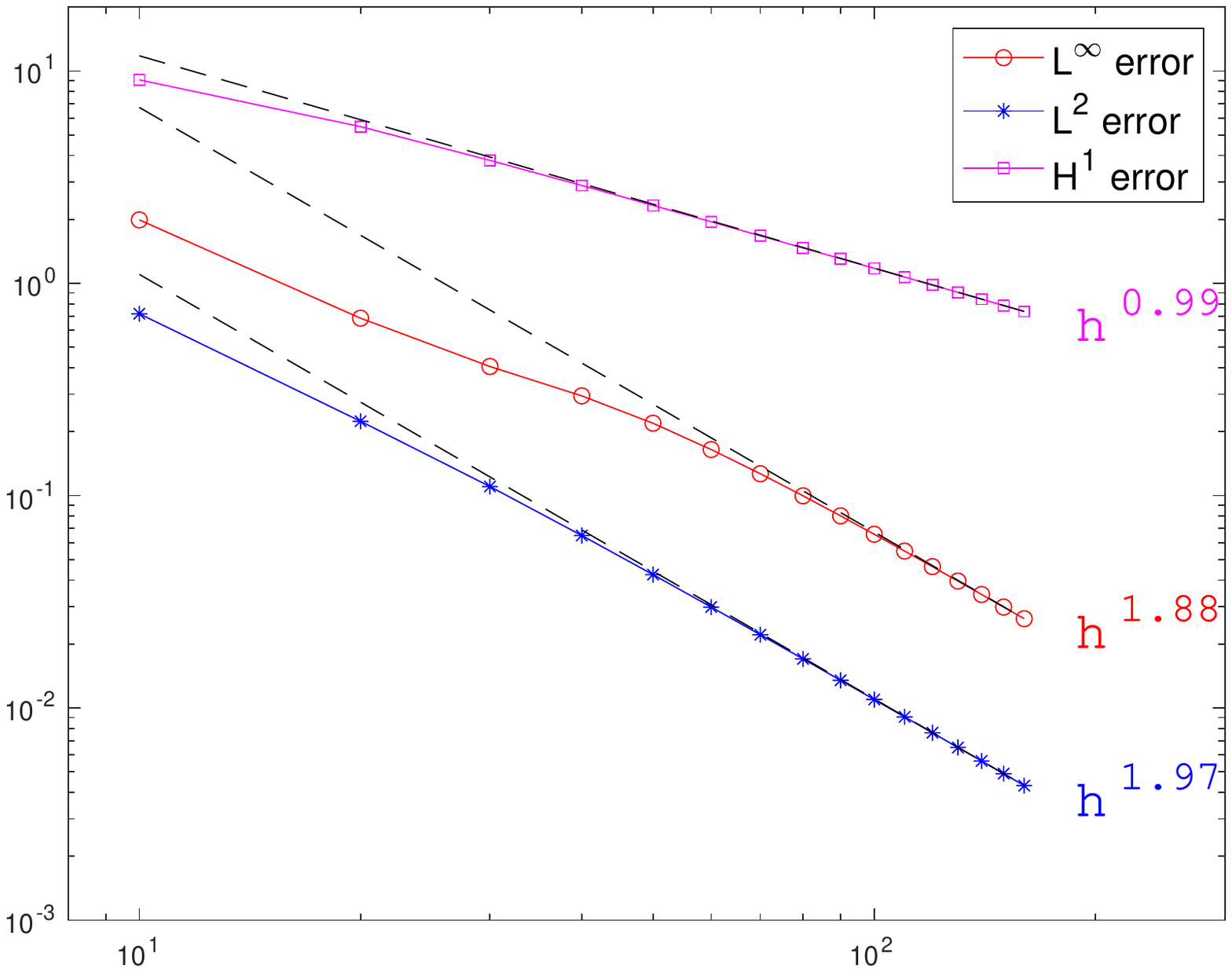}
\caption{Numerical errors and convergence order for the $H^1$ interface problem. The top two plots are for the spherical interface: $(\beta^-,\beta^+)=(1,10)$ and $(\beta^-,\beta^+)=(1,100)$, and bottom two plots are for the toroidal interface: $(\beta^-,\beta^+)=(1,10)$ and $(\beta^-,\beta^+)=(1,100)$. The black dashed lines indicate the expected $\mathcal{O}(h^2)$ convergence for the $L^{\infty}$ and $L^2$ errors and $\mathcal{O}(h)$ for the $H^1$ errors.}
\label{fig:H1exampCir}
\end{figure}

The second example concerns a more complicated interface shape that has two tori twisted with each other, see the right plot in Figure \ref{fig:interfaceExp}. 
The level-set functions of the two tori are $\phi_1 = (((x_1+0.3)^2+x_2^2)^{1/2}-0.2)^2+x_3^2-(\pi/5)^2$ and $\phi_2 = (((x_1- 0.3)^2+x_3^2)^{1/2}-0.2)^2+x_2^2-(\pi/5)^2$, 
and then the level-set function of this interface is given by $\phi(\bfx) = \min{( \phi_1(\bfx), \phi_2(\bfx) )}$.
The domain inside the two tori is $\Omega^-=\{ \bfx: \phi(\bfx) <0 \}$ and the outside one is $\Omega^+=\{ \bfx: \phi(\bfx) >0 \}$. The exact solution is given by
\begin{equation}
\label{H1example2}
u(\bfx) = 
\begin{cases}
      & 1,~~~~~~~~~~~~~~~~~~~~~~~~~    \text{if}~\bfx\in \Omega^-, \\
      & \cos( \phi_1(\bfx)\phi_2(\bfx) ),~~~~~~ \text{if}~\bfx\in \Omega^+.
\end{cases}
\end{equation}
In this case, the computational domain is $\Omega = (-1.3,1.3)^3$. 
The numerical solutions and errors are reported in the right two plots of Figure \ref{fig:H1exampCir}. As the interface has much larger curvature which requires the finer mesh to resolve. The convergence orders are estimated from the mesh size $N=60$ to $160$ which indicate the optimal convergence even for the $L^{\infty}$ errors.

\subsection{The $\bfH(\curl)$ interface problem}
\label{sec:hcurl-examples}

Now, let us consider the $\bfH(\curl)$ interface problem. 
It is known that solving the linear system from the Maxwell's equations is much more challenging. 
So here we first test the fast solver developed in Section~\ref{subsec:precond} for an extreme case that each interface element has small-cut subelements. 
For this purpose, we consider the domain $\Omega=(-1,1)^3$ with a flat interface $x_1=5\times 10^{-2-r}$. 
Fix the mesh size as $N=20$ and the parameters as $(\alpha^-,\alpha^+) = (1,10)$ and $(\beta^-,\beta^+) = (1,10)$. If $r=0$, i.e., $x_1=0.05$, the interface plane cuts all the interface elements exactly through the center and thus, each subelement has regular shape. 
In computation, we let $r=1,2,3,4$, i.e., the subelements on the left-side of the interface will become small accordingly. 
We report the condition numbers, the number of iterations and CPU time in Table \ref{table:exampleHcurlFlat}. 
We can see that small-cut interface elements can indeed make the conditioning worse, which may significantly increase the iteration numbers, see the results for $l=0$. 
However, the effect of small-cut interface elements can be successfully eliminated by the proposed block diagonal smoother $A_l$ in \eqref{blockeqn}. 
For this extreme case, $l=2$ seems sufficient to make the convergence completely independent of small subelements, but our numerical experience suggests that $l=1$ is good enough in general. 

\begin{table}[H]
\begin{center}
\resizebox{\textwidth}{!}{%
\begin{tabular}{|c|c |c | c|c | c|c | }
\hline
   & Interface location $r$  & 0       & 1      & 2      & 3    &  4     \\ \hline
    & Condition numbers  & $6.4\times10^5$       & $1.7\times10^7$      & $1.7\times10^9$      & $1.7\times10^{11}$    &  $1.7\times10^{13}$      \\ \hline
\multirow{2}{*}{$l=0$}   &\# iteration   & 44       & 53      & 107      & 327    &  842       \\\cline{2-7}
   &Time(s)   & 12    & 16       & 27      & 84      & 220          \\ \hline
   \multirow{2}{*}{$l=1$}   &\# iteration &   43   & 44       & 43      & 73      & 91          \\\cline{2-7}
   &Time(s)   & 11       & 11     & 11      & 21   &  24      \\ \hline
  \multirow{2}{*}{$l=2$}   &\# iteration   & 43       & 44      & 43      & 43    &  42         \\\cline{2-7}
   &Time(s)   & 11       & 12    & 12    &  11    & 12     \\ \hline
\end{tabular}
}
\end{center}
\caption{Condition numbers of the $\bfH(\curl)$ linear system with various interface location, and the related CPU time (in seconds) and \# iterations for the expanding width $l=0,1,2$, where $l=0$ means no block matrix used.}
\label{table:exampleHcurlFlat}
\end{table}
 
Next, we consider the spherical interface and slightly modify the benchmark example from Ref.~\refcite{2020GuoLinZou} of which the analytical solution is given by
\begin{equation}
\label{Hcurl_exactu_1}
\bfu = 
\begin{cases}
      & \frac{1}{\beta^- }\bfx +  \frac{1}{\alpha^- } n_1R_1(\bfx)[(x_2-x_3),(x_3-x_1),(x_1-x_2)]^{\top} ~~~~~~~~~~~~ \text{in}~ \Omega^-,  \\
      & \frac{1}{\beta^+ }\bfx +  \frac{1}{\alpha^+ }n_2R_1(\bfx)R_2(\bfx) [(x_2-x_3),(x_3-x_1),(x_1-x_2)]^{\top}  ~~~~ \text{in}~ \Omega^+,
\end{cases}
\end{equation}
where $\bfx = [x_1,x_2,x_3]^{\top}$ and $R_1(\bfx) = r^2_1 - \| \bfx \|^2$, $R_2(\bfx) = r^2_2 - \| \bfx \|^2$. The numerical experiment is carried on the meshes of $N=10,20,30,...,80$. In particular, the computational time and number of iterations are presented in Table \ref{table:exampleHcurl}. From the table, we can conclude that both the block-diagonal smoother and the HX preconditioner are important for reducing the iteration number for convergence. The results also show that the direct solver at each iteration does not cost significant computational time compared with the total cost of the iterative solver. Next, we report the numerical errors in both the $L^2$ and $\bfH(\curl)$ norms in the left two plots of \RG{Figure \ref{fig:HcurlexampCir}}, and the estimated convergence orders are also indicated in the plots which clearly demonstrate the optimality.

\begin{table}[H]
\begin{center}
\resizebox{\textwidth}{!}{%
\begin{tabular}{|c|c |c | c|c | c | c| c| c|}
\hline
   &Total \# DoFs          & 80554      & 244424      & 547074    &  1028452    & 1734626   &  2703384 &   3980338   \\ \hline
  \multirow{2}{*}{BD-PCG}   &\# iteration          & 962      & 1465      & 1932    &  2385    & 2828   &  3238    &      3661     \\\cline{2-9}
   &Time(s)          & 42.72    & 166.46    &  427.70    & 905.39   &  1693.95    &      2871.83   & 4542.63   \\ \hline
      \multirow{2}{*}{\makecell{BD-HX \\ $l=0$}}   &\# iteration          & 144      & 142      & 146    &  140    & 148   &  142    &      145     \\\cline{2-9}
   &Time(s)          & 38.89     & 69.09      & 117.97   &  214.89   & 401.81  &  533.28    &      808.19   \\ \hline
\multirow{2}{*}{\makecell{BD-HX \\ $l=1$}}   &\# iteration          & 75      & 76      & 81    &  77    & 80   &  83    &      90     \\\cline{2-9}
   &Time(s)         & 22.16      & 41.14      & 70.12    &  132.54    & 241.93   &  321.93    &      529.66     \\ \hline
\end{tabular}
}
\end{center}
\caption{CPU time and number of iterations for solving the $\bfH(\curl)$ linear system with the spherical interface and $(\alpha^-,\beta^-)=(1,1)$ and $(\alpha^+,\beta^+)=(100,200)$: block-diagonal HX (BD-HX) with $l=0$ and $1$ and the simple block-diagonal PCG (BD-PCG). The CPU time with respect to DoFs are approximatly $\mathcal{O}((\# \text{DoF})^{1.19})$, $\mathcal{O}((\# \text{DoF})^{0.80})$, $\mathcal{O}((\# \text{DoF})^{0.82})$ for BD-PCG, BD-HX($l=0$) and BD-HX($l=1$), respectively.}
\label{table:exampleHcurl}
\end{table}

\begin{figure}[h]
\centering
\includegraphics[width=0.35\textwidth]{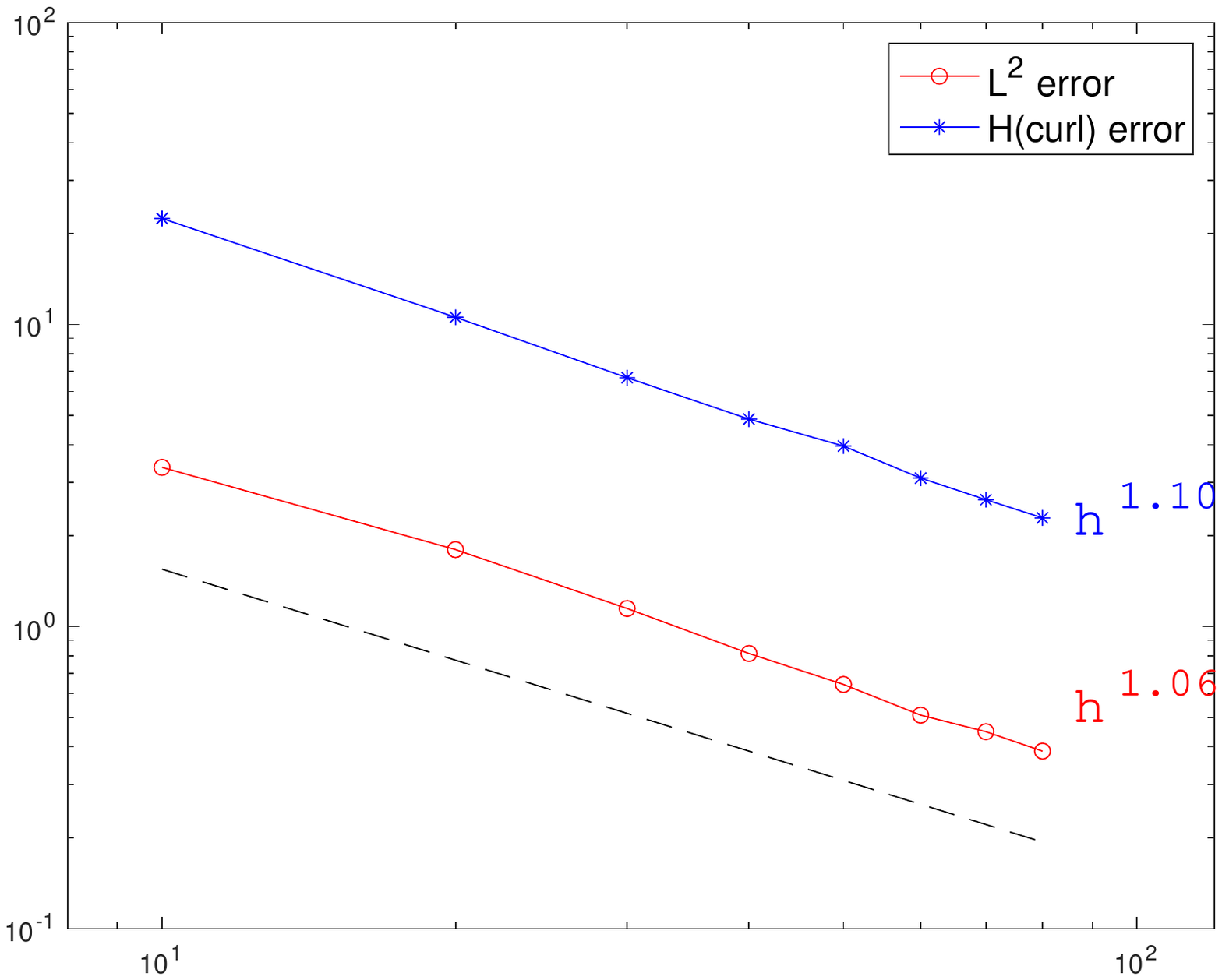}
\includegraphics[width=0.35\textwidth]{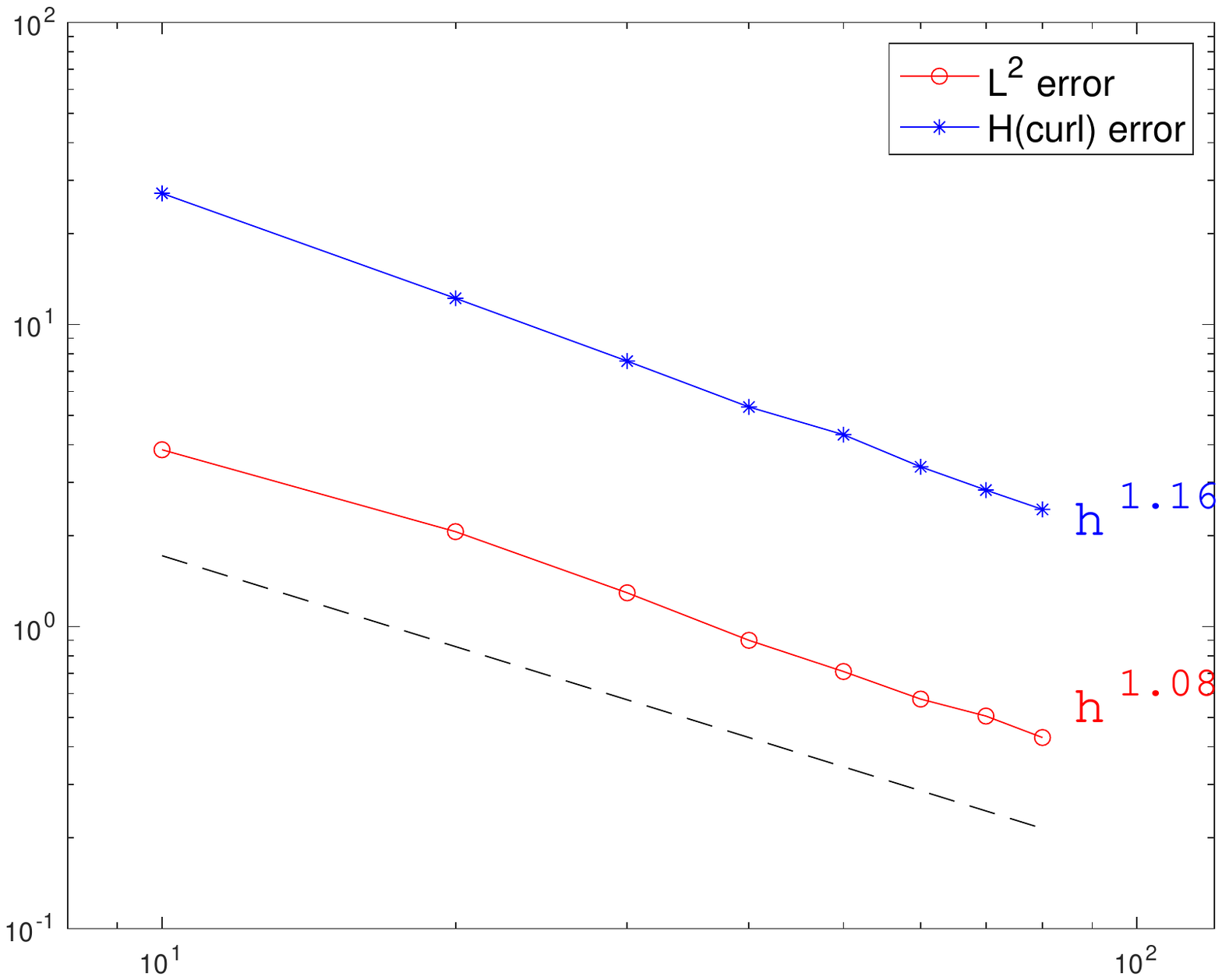}
\\
\includegraphics[width=0.35\textwidth]{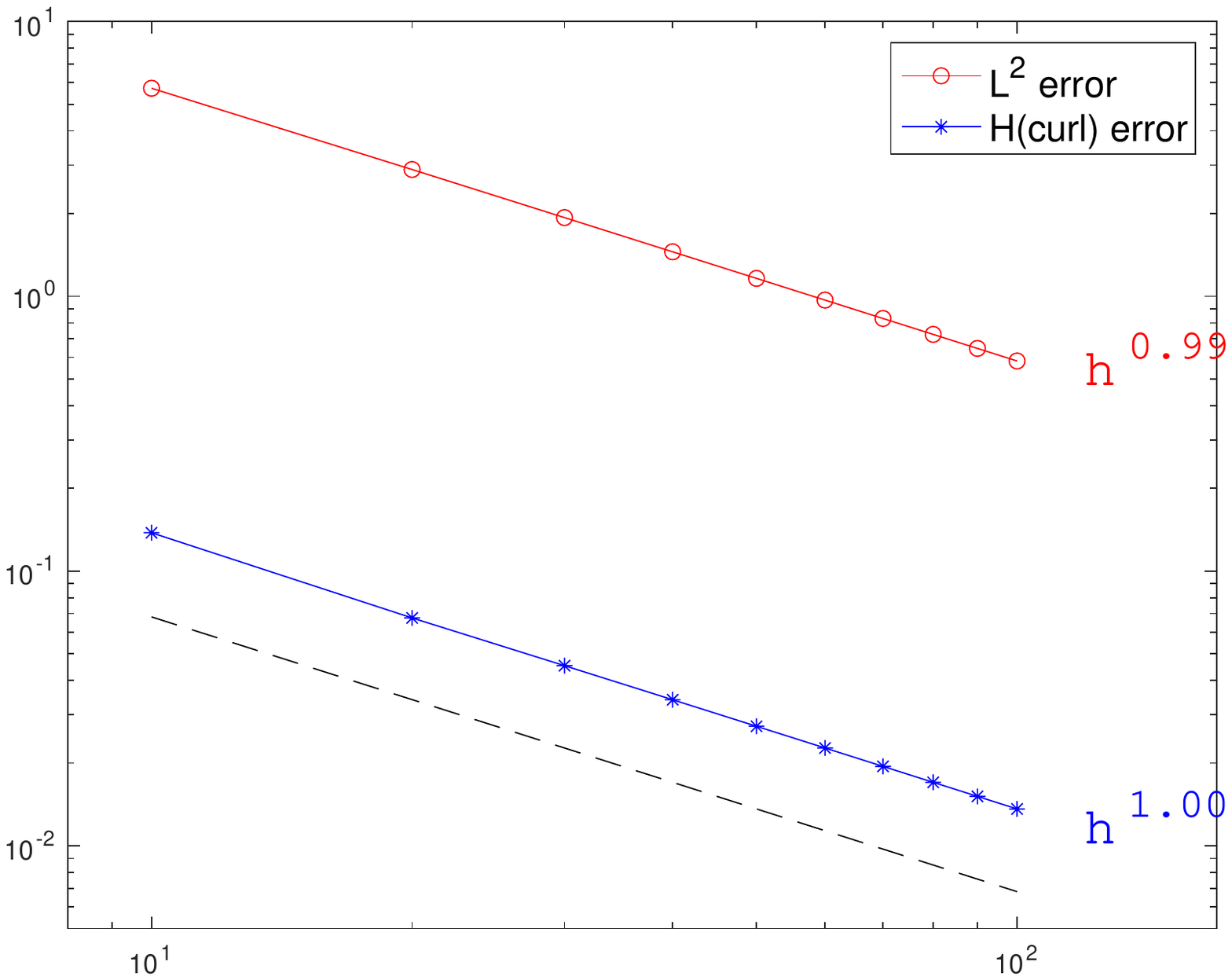}
\includegraphics[width=0.35\textwidth]{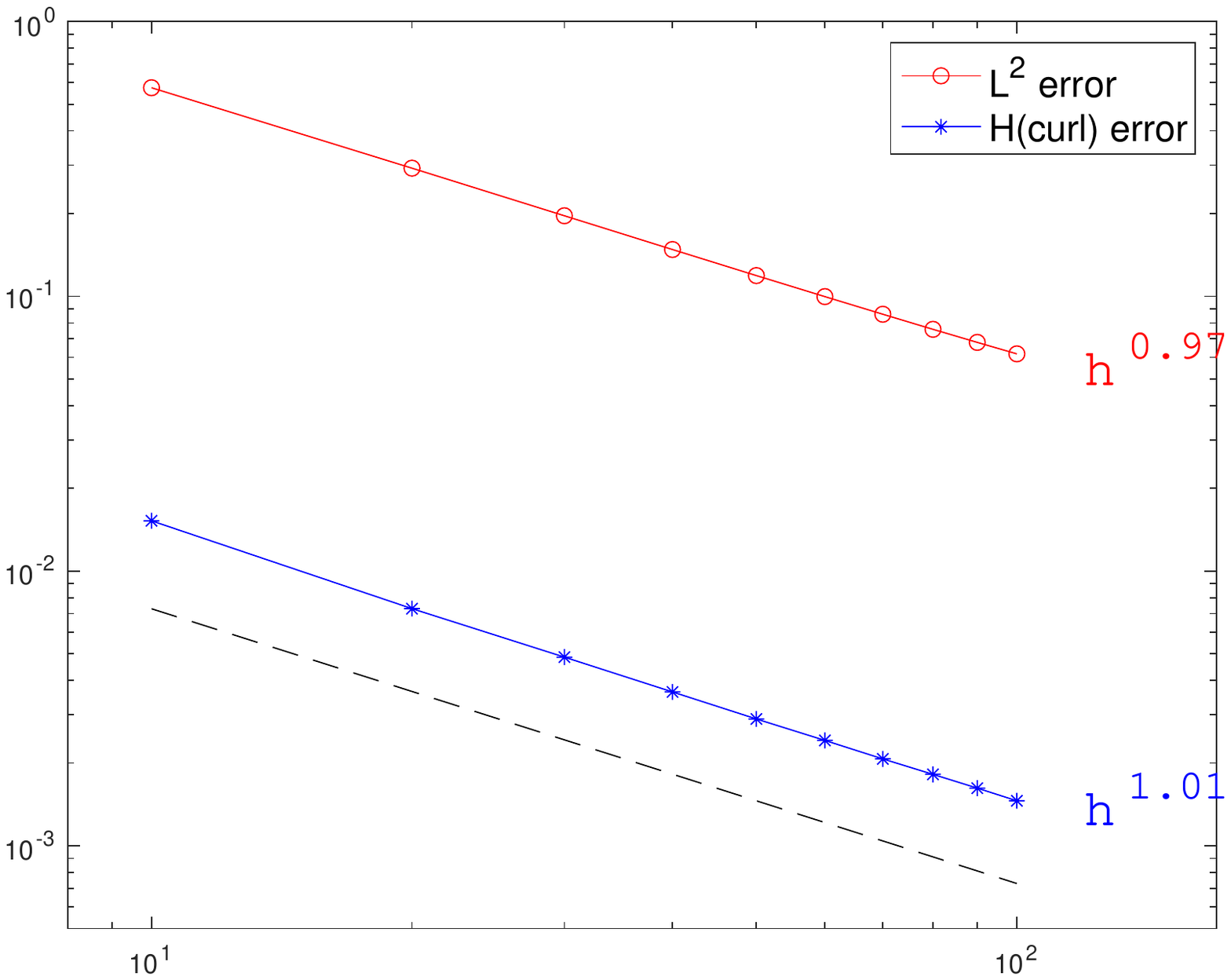}
\caption{Numerical errors and convergence order for the $\bfH(\curl)$ interface problem by the IVE method.
Top two: the first example. Bottom two: the second example.}
\label{fig:HcurlexampCir}
\end{figure}

\RG{IFE spaces can be also used in a dG-type scheme, i.e., penalties are used to handle 
the discontinuities across faces. This scheme works very well for $H^1$ interface problems
\cite{2020GuoLin,2020GuoZhang}, but results in only suboptimal convergence for $\bfH(\curl)$
problems. For the $\bfH(\curl)$ case, let us recall the scheme below. Let $\mathcal{F}^i_h$ 
be the collection of the interface faces. Let $\mathcal{S}_h$ be
a space containing IFE functions which may not be continuous on faces in $\mathcal{F}^i_h$.
Note that IFE functions can main tangential continuity on non-interface faces, and thus penalties
are only needed on interface faces. Then, the scheme is to find $\bfu_h\in\mathcal{S}_h$ such that
\begin{equation}
   \label{ppife_1}
   \tilde{a}_h(\bfu_h,\bfv_h) = \int_{\Omega} \bff\cdot\bfv_h \dd s
\end{equation}
where
\begin{equation}
   \label{ppife_2}
   \begin{aligned}
   \tilde{a}_h(\bfu_h,\bfv_h) := & \; (\alpha_h\curl\bfu_h, \curl \bfv_h)_{\Omega} + (\beta_h \bfu_h, \bfv_h)_{\Omega} 
   \\
   & + \sum_{F\in \mathcal{F}^i_h}\int_{F}\{\alpha_h \curl \bfu_h\}\cdot[\bfv_h\times\bfn] \dd s
   \\
   & + \sum_{F\in \mathcal{F}^i_h} \int_{F}\{\alpha_h \curl \bfv_h\}\cdot [\bfu_h\times\bfn] \dd s 
   \\
   & + \gamma  h^{-1} \sum_{F\in \mathcal{F}^i_h} \int_{F} [\bfu_h\times\bfn] [\bfv_h\times\bfn] \dd s, ~~~ \forall \bfv_h \in \mathcal{S}_h,
   \end{aligned}
\end{equation}
where $\gamma$ is a stabilization parameter which should be large enough and generally depends on $\alpha$.
We present the numerical results in Figure \ref{fig:Hcurlppife}. For the semi-$\bfH(\curl)$ norm,
we can clearly observe the sub-optimal convergence. The convergence under the $L^2$ norm deteriorates a little
as the mesh becomes finer. In some other setting, we can also observe much worse behavior for the $L^2$ norm.
\begin{figure}[h]
   \centering
   \includegraphics[width=1.6in]{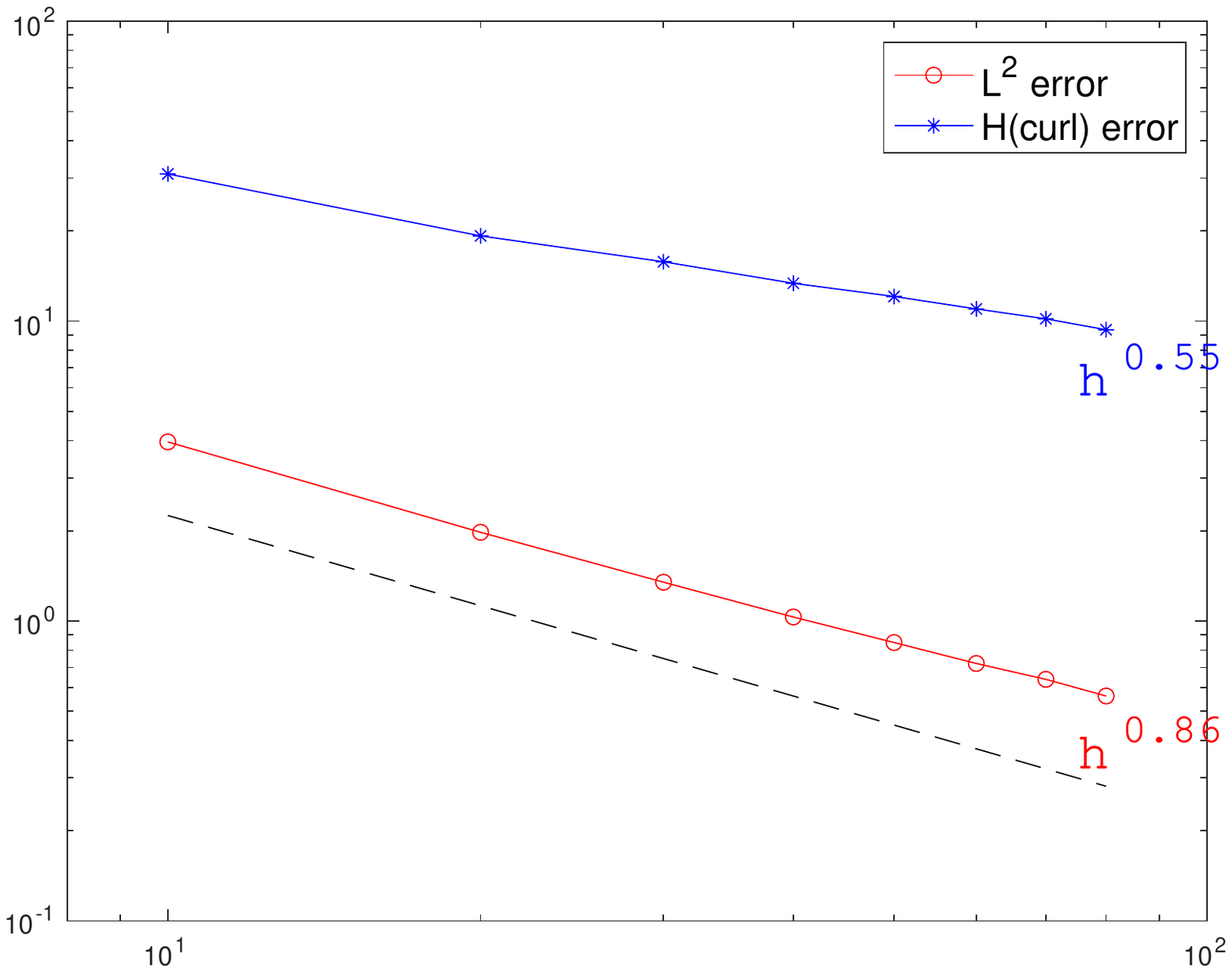}
   \includegraphics[width=1.6in]{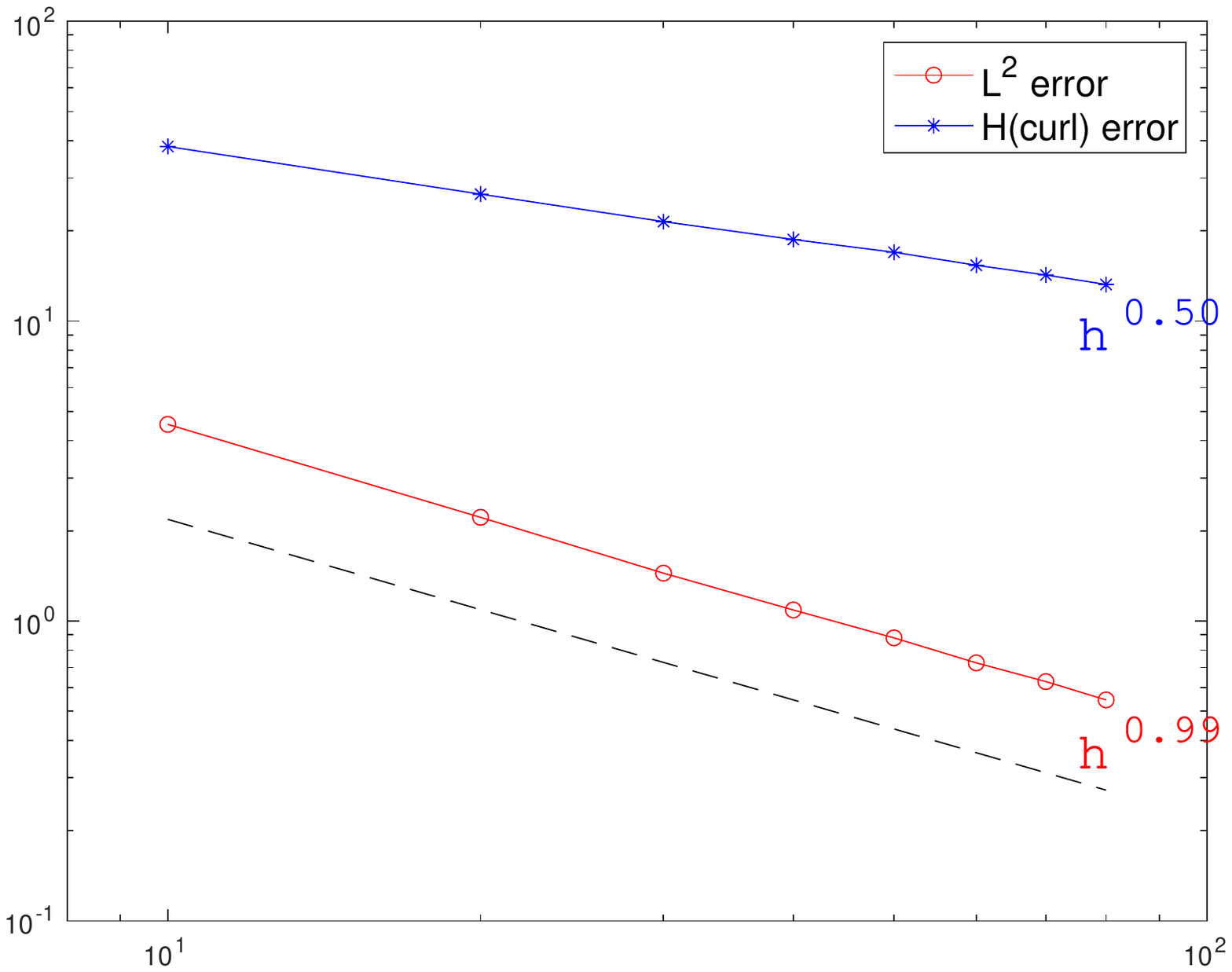}
   \caption{Numerical errors and convergence order for the $\bfH(\curl)$ interface problem by the penalty-type IFE method.
   Left two: the first example. Right two: the second example.}
   \label{fig:Hcurlppife}
   \end{figure}
}

In the second example, we also consider the twisted tori in the right plot of Figure \ref{fig:interfaceExp} on the domain $\Omega=(-1.3,1.3)^3$. To construct a function that satisfies the corresponding jump condition on the torus surface, we let $f(\bfx)=\phi_1(\bfx)\phi_2(\bfx) ( (x_1+0.3)^2+x_2^2 )( (x_1-0.3)^2+x_3^2 )$. Then, the exact solution is then defined as 
\begin{equation}
\label{Hcurl_exactu_2}
\bfu = \frac{1}{\beta} \nabla f(\bfx) + \frac{1}{\alpha} \cos( f(\bfx) )\bfv_0, ~~~~ \text{with} ~ \bfv_0 = [0,0,1]^{\top},
\end{equation}
where the boundary conditions and the source term are computed accordingly. The numerical results are reported in the right two plots of Figure \ref{fig:HcurlexampCir} which also shows the clear optimal convergence rate. These results demonstrate that the IVE method works well for complex surfaces.  

At last, we note that the penalty-type method cannot achieve optimal convergence numerically including both the IFE method~\cite{2020GuoLinZou} and the interface-penalty method~\cite{2016CasagrandeWinkelmannHiptmairOstrowski,2016CasagrandeHiptmairOstrowski}. 
Therefore, we believe the present method has distinguishing advantages in computational electromagnetism.

\begin{appendices}

\section{IFE Spaces on Complicated Geometry}
\label{sec:appen}

Here, we describe the IFE spaces for complicated interface element geometry, i.e., $\Gamma^K$ may have multiple components.
Let us assume that $\Gamma^K$ consists of the multiple components $\Gamma^{K,m}$, $m=1,2,...,M-1$, each of which is a simply-connected smooth surface. As $\Gamma$ is supposed not to intersect itself, $\Gamma^{K,m}$'s then do not intersect with one another. Without loss of generality, we assume the subelement containing $A_1$ is $K_1$ and $\Gamma^{K,1} = \partial K_1 \backslash \partial K$. Then, $K_m$ is the subelement bounded by $\partial K$, $\Gamma^{K,m}$ and $\Gamma^{K,m+1}$, $m=2,...,M-1$, and the remaining one is denoted by $K_M$. We show a 2D illustration of the geometry by the right plot in Figure \ref{fig:face_mesh}. The parameters associated with the subelement $K_m$ are denoted as $\alpha^m$ and $\beta^m$, which should take the values of $\alpha^-$ and $\alpha^+$ alternatively.

For each $\Gamma^{K,m}$, we let $\Gamma^{K,m}_h$ be its planar approximation. 
Similarly, define the subelement containing $A_1$ as $K_{h,1}$, and the others, i.e., $K_{h,2}$,...,$K_{h,M}$, are defined in a similar manner as their counterparts $K_1$,...,$K_M$. Note that each of $K_{h,1}$,...,$K_{h,M}$ is a polyhedron. Let $\alpha_h$ and $\beta_h$ be the piecewise constant functions defined on these polyhedral subelements, and denote $\alpha_{h,m}=\alpha_h|_{K_{h,m}}$ and $\beta_{h,m}=\beta_h|_{K_{h,m}}$, $m=1,...,M$.

Similar to \eqref{AB_space2}, we are able to derive explicit formulas for the functions in the spaces \eqref{AB_space1}. For each linear interface component $\Gamma^{K}_h$, we further let $\bar{\bft}^1_{m}$ and $\bar{\bft}^2_{m}$ be the two orthogonal tangential unit vectors to $\Gamma^{K,m}_h$, and denote the matrix $T_m=[\bar{\bfn}_{m}, \bar{\bft}^1_{m},\bar{\bft}^2_{m}]$. Then define the transformation matrices: 
\begin{equation}
\label{lem_ABeigen_eq1_new}
M^{f,c_h}_{K,m} = T \left[\begin{array}{ccc} 1 & 0 & 0 \\ 0 & \tilde{c}_m & 0 \\ 0 & 0 & \tilde{c}_m \end{array}\right] (T_m)^{\top} ~~~\text{and} ~~~ 
M^{e,c_h}_{K,m} = T \left[\begin{array}{ccc} \tilde{c}_m & 0 & 0 \\ 0 & 1 & 0 \\ 0 & 0 & 1 \end{array}\right] (T_m)^{\top},
\end{equation}
where $\tilde{c}_m=c_{h,m}/c_{h,m+1}$, with $m=1,2,...,M-1$, and define the spaces $\bfP^e_h(c_h;K)$ and $\bfP^f_h(c_h;K)$ as
\begin{subequations}
\label{AB_space2_new}
\begin{align}
      \bfP^e_h(c_h;K) = \{ & \bfc: \bfc_m= \bfc|_{K_{h,m}} \in \mathcal{P}_0(K_{h,m}), ~ m=1,...,M, \nonumber \\ 
      &  \bfc_{m+1} = M^{e,c_h}_{K,m} \bfc_m,  ~ m=1,...,M-1 \}, 
      \\
      \bfP^f_h(c_h;K) = \{  & \bfc: \bfc_m= \bfc|_{K_{h,m}} \in \mathcal{P}_0(K_{h,m}), ~ m=1,...,M, \nonumber \\  
      & \bfc_{m+1} = M^{f,c_h}_{K,m} \bfc_m,  ~ m=1,...,M-1 \}. 
\end{align}
\end{subequations}
Again, the constant vectors at different cut regions are related by the jump conditions and thus the dimension of both $\bfP^e_h(c_h;K)$ and $\bfP^f_h(c_h;K)$ is also $3$. In this case, the formulas of IFE functions are slightly more complicated which are presented in the following lemma.

\begin{lemma}
\label{lem_IFEfun}
Let $\bfx_{K,m}$ be an arbitrary point at $\Gamma^{K,m}_h$. Let $\bfa=\bfa_m$ and $\bfb=\bfb_m$ be two arbitrary vectors in $\bfP^f_h(a_h;K)$ and $\bfP^e_h(b_h;K)$, and let $c$ be an arbitrary constant. %
Then, the formulas for the functions in $S^n_h(b_h;K)$, $\bfS^e_h(a_h,b_h;K)$ and $\bfS^f_h(a_h;K)$, respectively, are
\begin{equation}
\label{lem_IFEfun_H1}
v^n_h = 
\begin{cases}
      & \bfb_m\cdot(\bfx-\bfx_{K,1}) + c  ~~~~~~~~~~~~~~~~~~~~~~~~~~~~~~~~~~~~~~~~~~~ \text{in} ~ K_{h,m}, ~ m =1,2, \\
      & \bfb_m\cdot(\bfx-\bfx_{K,m-1}) + c + \sum_{l=2}^{m-1} \bfb_{l}\cdot(\bfx_{K,l} - \bfx_{K,l-1} ), ~~~ \text{in} ~ K_{h,m}, ~ m \geqslant 3,
\end{cases}
\end{equation}
\begin{equation}
\label{lem_IFEfun_Hcurl}
\bfv^e_h = 
\begin{cases}
      & \bfa_m\times(\bfx-\bfx_{K,1}) + \bfb_m ~~~~~~~~~~~~~~~~~~ \text{in} ~ K_{h,m}, ~ m =1,2, \\
      & \bfa_m\times(\bfx-\bfx_{K,m-1}) + \bfb_m + \bfxi_m, ~~~~~~ \text{in} ~ K_{h,m}, ~ m \geqslant 3,
\end{cases}
\end{equation}
with 
$$ \bfxi_m :=  \sum_{l=2}^{m-1} ( M^{e,b_h}_{K,m-1} \cdots  M^{e,b_h}_{K,l} ) \left[ \bfa_{l}\times(\bfx_{K,l} - \bfx_{K,l-1} ) \right],$$ 
and 
\begin{equation}
\label{lem_IFEfun_Hdiv}
\bfv^f_h = 
\begin{cases}
      & c(\bfx-\bfx_{K,1}) + \bfa_m ~~~~~~~~~~~~~~~\text{in} ~ K_{h,m}, ~ m =1,2, \\
      & c(\bfx-\bfx_{K,m-1}) + \bfa_m + \bfeta_m, ~~~ \text{in} ~ K_{h,m}, ~ m \geqslant 3,
\end{cases}
\end{equation}
with 
$$
\begin{aligned}
\bfeta_m & :=  c \sum_{l=2}^{m-1} ( M^{f,a_h}_{K,m-1} \cdots  M^{f,a_h}_{K,l} ) (\bfx_{K,l} - \bfx_{K,l-1} ) 
\\ 
&=  c  \sum_{l=2}^{m-1} (\prod^{l}_{n=m-1} M^{f,a_h}_{K,n} )(\bfx_{K,l} - \bfx_{K,l-1} ). 
\end{aligned}
$$
The formed IFE spaces also have the dimension $4, 6,$ and $4$ for the $H^1$, $\bfH(\curl)$ and $\bfH(\div)$ cases, respectively.
\end{lemma}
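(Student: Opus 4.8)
The plan is to verify directly that each of the three stated formulas defines a function lying in the appropriate IFE space, by checking (i) that its piece on each polyhedral subelement $K_{h,m}$ has the correct polynomial structure (constant gradient plus constant, or constant curl plus constant, or constant divergence plus constant), and (ii) that across each planar interface component $\Gamma^{K,m}_h$ the relevant continuity and jump conditions from Table~\ref{tab_IFE_func} hold. The key algebraic input is the transformation rule built into the spaces $\bfP^e_h(c_h;K)$ and $\bfP^f_h(c_h;K)$: the constant vectors satisfy $\bfc_{m+1} = M^{e,c_h}_{K,m}\bfc_m$ or $\bfc_{m+1} = M^{f,c_h}_{K,m}\bfc_m$, where the matrices $M^{e,c_h}_{K,m}$ and $M^{f,c_h}_{K,m}$ are exactly the ones encoding the one-sided jump conditions across a single planar interface, as established for the single-cut case in \eqref{lem_ABeigen_eq1}--\eqref{AB_space2}. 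So locally across each $\Gamma^{K,m}_h$ the situation reduces to the single-interface case already handled.

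First I would treat the $\bfH(\div)$ case \eqref{lem_IFEfun_Hdiv}, which is the cleanest. On each $K_{h,m}$ the function is $c(\bfx-\bfx_{K,m-1})+\bfa_m+\bfeta_m$, whose divergence is the constant $3c$ on every subelement, so $\div\bfv^f_h\in\mathcal P_0(K)$ is immediate. For the normal continuity $[\bfv^f_h\cdot\bar\bfn_m]_{\Gamma^{K,m}_h}=0$, I would evaluate the jump of $\bfv^f_h$ across $\Gamma^{K,m}_h$: the radial terms $c(\bfx-\bfx_{K,m-1})$ are arranged (via the telescoping correction $\bfeta_m$, which records the base-point shifts and their propagation through the matrices $M^{f,a_h}_{K,n}$) so that the only surviving jump is $\bfa_{m+1}-M^{f,a_h}_{K,m}\bfa_m=\mathbf 0$, and projecting onto $\bar\bfn_m$ and using the block structure of $M^{f,a_h}_{K,m}$ (which fixes the normal component, changes the tangential ones) gives vanishing normal jump on all of $\Gamma^{K,m}_h$ while the tangential jump vanishes only at the chosen point $\bfx_{K,m}$ after subtracting off the radial part — exactly as in the single-cut case. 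The $H^1$ case \eqref{lem_IFEfun_H1} is analogous with $\bfv^f_h$ replaced by $\nabla v^n_h$ (which is the piecewise-constant field $\bfb_m$, so $v^n_h\in H^1$ requires matching the traces along $\Gamma^{K,m}_h$, achieved by the accumulated constants $\sum_{l=2}^{m-1}\bfb_l\cdot(\bfx_{K,l}-\bfx_{K,l-1})$), and the flux jump $[b_h\nabla v^n_h\cdot\bar\bfn_m]=0$ follows from $\bfb_{m+1}=M^{e,b_h}_{K,m}\bfb_m$ together with the defining relation $M^{e,c_h}_{K,m}$ multiplies the normal component by $\tilde c_m=c_{h,m}/c_{h,m+1}$. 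For \eqref{lem_IFEfun_Hcurl} I would similarly compute $\curl\bfv^e_h=2\bfa_m$ on $K_{h,m}$ and check that the $\bfH(\div)$-type jump conditions satisfied by $\bfa\in\bfP^f_h(a_h;K)$ make $a_h\curl\bfv^e_h$ admissible, while the correction $\bfxi_m$ (now propagated through $M^{e,b_h}_{K,l}$) ensures tangential continuity of $\bfv^e_h$ across each $\Gamma^{K,m}_h$; the normal jump $[b_h\bfv^e_h\cdot\bar\bfn_m]=0$ holds at $\bfx_{K,m}$ once the $\bfa_m\times(\bfx-\cdot)$ contribution is accounted for, exactly mirroring the single-cut entry in Table~\ref{tab_IFE_func}.

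Finally, for the dimension count I would argue that in each case the free data are: one constant $c$ (scalar) and one vector in $\bfP^e_h$ or $\bfP^f_h$ for $H^1$ and $\bfH(\div)$, giving $1+3=4$; and two vectors $\bfa\in\bfP^f_h(a_h;K)$, $\bfb\in\bfP^e_h(b_h;K)$ for $\bfH(\curl)$, giving $3+3=6$ — and that distinct choices of these data give distinct functions (the map from data to function is injective because one recovers $\bfa_1,\bfb_1,c$ from the restriction to $K_{h,1}$, and the remaining $\bfa_m,\bfb_m$ are then forced by the transformation matrices). The main obstacle I anticipate is purely bookkeeping: getting the telescoping correction terms $\bfxi_m$ and $\bfeta_m$ exactly right so that the base-point shifts $\bfx_{K,l}-\bfx_{K,l-1}$ introduced when passing from $K_{h,l-1}$ to $K_{h,l}$ are correctly transported through the products of transformation matrices $M^{\bullet}_{K,n}$, and verifying that the resulting jump across each $\Gamma^{K,m}_h$ collapses to precisely the single-interface relation already known to encode the jump conditions; once that reduction is in place, every condition in Table~\ref{tab_IFE_func} follows from the single-cut analysis component by component, so the lemma is "verified directly" as stated, but the indexing must be handled with care.
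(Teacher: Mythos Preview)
Your proposal is correct and follows essentially the same approach as the paper: the paper's proof is simply the two-line statement that one verifies directly the jump conditions of Table~\ref{tab_IFE_func} hold on each $\Gamma^{K,m}_h$ and counts dimensions by the free data $\bfa,\bfb,c$. Your outline is in fact considerably more detailed than what the paper provides, and your identification of the telescoping bookkeeping for $\bfxi_m,\bfeta_m$ as the only real work is exactly right.
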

\begin{proof}
One can directly verify that these piecewisely-defined functions satisfy the corresponding jump conditions shown in Table \ref{tab_IFE_func} but on each $\Gamma^{K,m}_h$. The dimension can be simply counted by the number of free variables of $\bfa$, $\bfb$ and $c$ in the formulas above.
\end{proof}

Note that in Lemma \ref{lem_IFEfun}, the points $\{\bfx_{K,m}\}_{m=1}^{M-1}$ should be chosen and fixed.

\end{appendices}

\section*{Acknowledgment}
This work was supported in part by the National Science Foundation under grants DMS-1913080, DMS-2012465, and DMS-2136075. We would like to thank the anonymous reviewers for the suggestions on improving this article. We would like to thank Dr. Xuehai Huang (Shanghai University of Finance and Economics) for his numerous advices on revising the article.

\end{document}